\declaretheorem{theorem}
\declaretheorem{corollary}
\declaretheorem{lemma}
\declaretheorem{proposition}
\declaretheoremstyle[qed=$\square$]{definitionwithend}
\declaretheorem[style=definitionwithend]{definition}
\declaretheorem[style=definitionwithend]{assumption}
\declaretheorem[style=definitionwithend]{example}
\declaretheorem[style=definitionwithend]{remark}
\crefname{fact}{Fact}{Facts}
\crefname{assumption}{Assumption}{Assumptions}
\definecolor{gold}{rgb}{0.85,0.65,0}
\newcommand{\abs}[1]{\ensuremath{\left\lvert #1 \right\rvert}}
\newcommand{\by}{\times}
\newcommand{\norm}[1]{\ensuremath{\left\lVert #1 \right\rVert}}
\newcommand{\ip}[1]{\ensuremath{\left\langle #1 \right\rangle}}
\let\emptyset\varnothing
\newcommand{\set}[1]{\left\{#1\right\}}
\newcommand{\bb}{\mathbb}
\def\A{{\mathbb{A}}}
\def\C{{\mathbb{C}}}
\def\L{{\mathbb{L}}}
\def\R{{\mathbb{R}}}
\def\S{{\mathbb{S}}}
\def\cA{{\cal A}}
\def\cB{{\cal B}}
\def\cD{{\cal D}}
\def\cE{{\cal E}}
\def\cF{{\cal F}}
\def\cG{{\cal G}}
\def\cH{{\cal H}}
\def\cJ{{\cal J}}
\def\cM{{\cal M}}
\def\cN{{\cal N}}
\def\cR{{\cal R}}
\def\cS{{\cal S}}
\def\cT{{\cal T}}
\def\cV{{\cal V}}
\def\cX{{\cal X}}
\DeclareMathOperator{\Opt}{Opt}
\DeclareMathOperator*{\argmin}{arg\,min}
\DeclareMathOperator{\rank}{rank}
\DeclareMathOperator{\Diag}{Diag}
\DeclareMathOperator{\diag}{diag}
\DeclareMathOperator{\tr}{tr}
\DeclareMathOperator{\range}{range}
\newenvironment{smallpmatrix}
{\left(\begin{smallmatrix}}
{\end{smallmatrix}\right)}
\DeclareMathOperator*{\E}{\mathbb{E}}
\DeclareMathOperator{\aff}{aff}
\DeclareMathOperator{\spann}{span}
\DeclareMathOperator{\inter}{int}
\DeclareMathOperator{\rint}{rint}
\DeclareMathOperator{\cl}{cl}
\DeclareMathOperator{\rbd}{rbd}
\DeclareMathOperator{\conv}{conv}
\DeclareMathOperator{\cone}{cone}
\DeclareMathOperator{\clconv}{clconv}
\DeclareMathOperator{\clcone}{clcone}
\def\extr{{\mathop{\rm extr}}}
\DeclareMathOperator{\Sym}{Sym}
\newcommand{\obj}{\textup{obj}}
\newcommand{\gobj}{\gamma_\obj}
\newcommand{\qobj}{q_\obj}
\newcommand{\Mobj}{M_\obj}
\renewcommand{\cite}{\citet}
\definecolor{lightblue}{HTML}{00afb9}
\begin{document}

\title{Exactness in SDP relaxations of QCQPs: Theory and applications}
\author[]{Fatma K{\i}l{\i}n\c{c}-Karzan}
\author[]{Alex L.\ Wang}
\affil[]{\texttt{\normalsize\{fkilinc,alw1\}@andrew.cmu.edu}	}
\affil[]{Carnegie Mellon University, Pittsburgh, PA, 15213, USA.}

\date{\today}

\maketitle
\begin{abstract}
Quadratically constrained quadratic programs (QCQPs) are a fundamental class of optimization problems. In a QCQP, we are asked to minimize a (possibly nonconvex) quadratic function subject to a number of (possibly nonconvex) quadratic constraints. Such problems arise naturally in many areas of operations research, computer science, and engineering. Although QCQPs are NP-hard to solve in general, they admit a natural convex relaxation via the standard (Shor) semidefinite program (SDP) relaxation. 
In this tutorial, we will study the SDP relaxation for general QCQPs, present various exactness concepts related to this relaxation and discuss conditions guaranteeing such SDP exactness.
In particular, we will define and examine three notions of SDP exactness: (i) \textit{objective value exactness}---the condition that the optimal value of the QCQP and the optimal value of its SDP relaxation coincide,  (ii) \textit{convex hull exactness}---the condition that the convex hull of the QCQP epigraph coincides with the (projected) SDP epigraph, and (iii) the \emph{rank-one generated} (ROG) property---the condition that a particular conic subset of the positive semidefinite matrices related to a given QCQP is generated by its rank-one matrices.
Our analysis for objective value exactness and convex hull exactness stems from a geometric treatment of the projected SDP relaxation and crucially considers how the objective function interacts with the constraints.
The ROG property complements these results by offering a sufficient condition for both objective value exactness and convex hull exactness which is oblivious to the objective function.
By analyzing the geometry of the associated sets, we will give a variety of sufficient conditions for these exactness conditions and discuss settings where these sufficient conditions are additionally necessary.
Throughout, we will highlight implications of our results for a number of example applications.

 \end{abstract}
\section{Introduction}\label{sec:Intro}

Quadratically constrained quadratic programs (QCQPs) are a fundamental class of nonconvex optimization problems that naturally arise in operations research, engineering, and computer science; see \cite{wang2021tightness} for additional applications of QCQPs.
The ubiquity of this class of optimization problems stems from its expressiveness; any $\set{0,1}$ integer program or polynomial optimization problem may be recast as a QCQP (see~\cite{phan1982quadratically,bao2011semidefinite,benTal2009robust} and references therein).

It is well known that QCQPs are NP-hard to solve in general---indeed, the NP-hard combinatorial problem Max-Cut can be readily recast as a QCQP.
On the other hand, the standard (Shor) semidefinite program (SDP) relaxation offers a natural tractable convex relaxation for a general QCQP \cite{shor1990dual}. This convex relaxation is obtained by first reformulating the QCQP in a lifted space with an additional rank constraint and then dropping the rank constraint. 
Several papers have studied the quality of this relaxation for specific problem classes such as Max-Cut \cite{goemans1995improved} as well as for more general QCQPs \cite{ye1999approximating,nesterov1997quality,benTal2001lectures,megretski2001relaxations}.

\citet{laurent1995positive} showed that it is NP-hard to determine whether the SDP relaxation of a given QCQP has \textit{objective value exactness}, i.e., whether the optimum objective value of the QCQP matches the optimum objective value of its SDP relaxation. Nevertheless, a recent line of work has focused on \emph{sufficient} conditions that ensure various forms of SDP exactness. In this direction, prior work has focused on the case where there are only a few (usually one or two) nonconvex quadratic functions in the QCQP. This research vein can be traced back to Yakubovich's S-procedure \cite{yakubovich1971s,fradkov1979s-procedure} (also known as the S-lemma) and the work of \citet{sturm2003cones}. In particular, the classical trust region subproblem (TRS)---the problem of minimizing a nonconvex quadratic function over an ellipsoid---and its variants have attracted significant attention and cases under which an exact SDP reformulation is possible have been investigated; see the excellent survey by \citet{burer2015gentle} and references therein.
For example,
\citet{jeyakumar2013trust} showed that the standard SDP relaxation of the TRS with additional linear inequalities has objective value exactness under a condition regarding the dimension of the minimum generalized eigenspace.
\citet{hoNguyen2017second} give a generalization of \cite[Section 6]{jeyakumar2013trust} (see \citet[Section 2.2]{hoNguyen2017second} for a comparison of these conditions). Follow-up work by \citet{wang2020generalized} extends these results to the setting of the generalized trust region subproblem (GTRS)---the problem of minimizing a nonconvex quadratic function over a nonconvex quadratic constraint.
Additional work \cite{benTal2014hidden,locatelli2015some,locatelli2016exactness} observes that under a simultaneous diagonalizability assumption, it is possible to rewrite the SDP relaxation as a second-order cone program (SOCP). Sufficient conditions for objective value exactness can then be derived by analyzing the KKT-multipliers of the associated SOCP. This approach is investigated for the extended TRS \cite{locatelli2015some} and simultaneously diagonalizable QCQPs with two constraints \cite{benTal2014hidden}.

A more recent line of research has focused on sufficient conditions for different forms of exactness which do not make explicit assumptions on the number of nonconvex quadratic functions.
In this context, objective value exactness has been investigated frequently. As an example, \citet{burer2019exact,locatelli2020kkt} establish sufficient conditions under which diagonal QCQPs (those QCQPs with diagonal quadratic forms) have this property.
\citet{wang2021tightness} study the geometry of the set of convex Lagrange (dual) multipliers and a natural symmetry parameter of the QCQP and establish that objective value exactness (among other forms of exactness) holds whenever the set of Lagrange multipliers is polyhedral and the symmetry parameter is large enough.
On the one hand, the framework presented in \cite{wang2021tightness} is general enough to cover and extend many existing results \cite{locatelli2015some,burer2019exact,fradkov1979s-procedure,hoNguyen2017second,wang2020generalized} on objective value exactness.
On the other hand, the embedded assumption that the set of convex Lagrange multipliers is polyhedral is restrictive and prevents the results in \cite{wang2021tightness} from being applied to a number of interesting QCQPs (including some that are known to have exact SDP relaxations). 
More recently, \citet{wang2020geometric} presented a generalization of their framework from \cite{wang2021tightness} and record sufficient conditions for objective value exactness for \emph{general} sets of convex Lagrange multipliers.
They furthermore show that these sufficient conditions are also necessary in the context of \emph{convex hull exactness} (see the following paragraph) when the polar cone to the set of convex Lagrange multipliers is facially exposed.

\citet{wang2020convex,wang2021tightness,wang2020geometric} complement sufficient conditions for objective value exactness with sufficient conditions
for \textit{convex hull exactness}, i.e., the property that the convex hull of the QCQP epigraph coincides with the (projected) SDP epigraph.
Although convex hull exactness is a stronger property than objective value exactness, it is also more widely applicable.
Specifically, results showing how to convexify commonly occurring substructures in nonconvex problems are useful in building strong convex relaxations for more complicated nonconvex problems.
Such results have advanced state-of-the-art computational approaches for mixed integer linear programs and general nonlinear nonconvex programs; see \cite{conforti2014integer,tawarmalani2002convexification}. 
In this direction, \cite{wang2020convex,wang2021tightness} established that the convex hull of the epigraph of a ``highly-symmetric'' QCQP with a polyhedral set of convex Lagrange multipliers is given by its projected SDP relaxation.
These results were further generalized in \cite{wang2020geometric} where the assumption that the set of convex Lagrange multipliers is polyhedral is dropped.
The results from \cite{wang2020convex,wang2021tightness} recover a number of existing results \cite{hoNguyen2017second,wang2020generalized,modaresi2017convex,yildiran2009convex,burer2017how}, including convex hull characterizations of specific sets defined by one or two quadratics.
The generalization given in \cite{wang2020geometric} widely broadens the applicability of the framework and recovers convex hull results for quadratic matrix programs \cite{beck2007quadratic,beck2012new} as well as a basic mixed-binary set related to the ``perspective reformulation/relaxation trick'' \cite{gunluk2010perspective,frangioni2006perspective}.
We remark that the ``perspective reformulation/relaxation trick'' is well known in the literature and has been useful in deriving convex hull exactness for a variety of sets arising in sparsity-constrained optimization \cite{gunluk2010perspective,ceria1999convex,frangioni2006perspective,wei2020convexification,wei2020ideal,atamturk2018sparse,dong2013valid}.

The sufficient conditions for objective value and convex hull exactness presented in \cite{wang2021tightness,wang2020geometric,wang2020convex} rely heavily (unsurprisingly) on how the objective function interacts with the constraints.
A separate line of work complements these results by investigating conditions for SDP exactness which are oblivious to the objective function. Such results are useful, for example, in settings where the objective function may evolve with time or are not known a priori.
In this direction, an important geometric property, namely, the \emph{rank-one generated} (ROG) property (first coined in \cite{hildebrand2016spectrahedral}), plays an important role. A closed convex conic subset of the positive semidefinite cone is said to be ROG if it is equal to the convex hull of its rank-one matrices. 
As an immediate example, the positive semidefinite cone itself is ROG.
One may compare the ROG property of a closed conic subset of the positive semidefinite cone with the integrality property of a polytope. In both cases, the property states that the convex set in question is the convex hull of a nonconvex set of interest.

In contrast to well-known sufficient conditions, e.g., total unimodularity or total dual integrality, for the integrality property of polyhedra (see \cite{deCarliSilva2020notion} and references therein),
the research on the ROG property is much more recent and limited.
Indeed, this property until recently had only been studied \emph{incidentally} to other research questions.
A series of works in the matrix completion literature \cite{grone1984positive,agler1988positive,paulsen1989schur}
show that the set of positive semidefinite matrices with a fixed chordal support is ROG.
The celebrated S-lemma \cite{yakubovich1971s} (see also \cite{sturm2003cones}) can be interpreted as saying that the intersection of the positive semidefinite cone with a single linear matrix inequality (LMI) is ROG.
A closely related line of work gives explicit descriptions of the ROG cones related to quadratic programs over triangles, tetrahedra, and quadrilaterals \cite{anstreicher2010computable}, and ellipsoids missing caps \cite{burer2014trust}; see also the excellent survey paper \cite{burer2015gentle}.
More recently,
\citet{hildebrand2016spectrahedral,blekherman2017sums} study algebraic properties of ROG cones obtained by adding homogeneous linear matrix equalities (LMEs) to the positive semidefinite cone.

Extending this line of work, \citet{argue2020necessary} examine the question of what the ROG property of a conic subset of the positive semidefinite cone corresponds to in terms of its defining LMIs.
They present a toolset for studying the ROG property based on facial structure (similar to that for integrality in polyhedra) and use it to derive a number of sufficient conditions for the property. This toolset is additionally used to give an explicit characterization of the ROG cones defined by at most two LMIs. This extends one of the few settings in the literature---the case of a single LMI and the S-lemma---where an explicit characterization of the ROG property is understood.

\subsection{Overview and outline}

This tutorial summarizes some recent work on notions of exactness in SDP relaxations of QCQPs: objective value exactness, convex hull exactness, and the ROG property. 
Specifically, the contents of this tutorial track closely with results first presented in \cite{wang2021tightness,wang2020geometric,argue2020necessary,wang2020convex}.
An outline of this tutorial is as follows:
\begin{enumerate}[(a)]

\item We begin with preliminaries in \cref{sec:Prelim}. \cref{sec:Setup,sec:projected_SDP} introduce QCQPs and their SDP relaxations.
\cref{sec:aggregation} then states our main assumptions and recalls basic facts from the framework of \cite{wang2021tightness, wang2020geometric}.
Specifically, we recall the definition of the set of convex Lagrange multipliers and its role in defining the SDP relaxation (see \cref{lem:cS_SDP_description}).
This description is the starting point for both the objective value and convex hull exactness results that we will present in \cref{sec:obj_val,sec:ch_exactness_polyhedral,sec:convex_hull_exactness}. 
We conclude this section by presenting preliminaries related to the ROG property in \cref{subsec:rog_prelim}.

\item \cref{sec:obj_val} presents sufficient conditions for objective value exactness under the assumption that the set of convex Lagrange multipliers is polyhedral. We define the notion of definite and semidefinite faces of the set of convex Lagrange multipliers and use these definitions to state our first sufficient conditions. \cref{thm:sdp_tightness_main} presents a sufficient condition under which the minimizers of the (projected) SDP relaxation are also the minimizers of the QCQP.
Note that this property is stronger than objective value exactness.
\cref{thm:sdp_tightness_weak} relaxes the sufficient condition of \cref{thm:sdp_tightness_main} when only objective value exactness is required.
We follow these results with a comparison of our sufficient conditions with others proposed in the literature and a number of example applications. Specifically, we will see how to recover objective value exactness results for diagonal QCQPs with sign-definite linear terms and QCQPs with centered constraints and polyhedral convex Lagrange multipliers.

\item \cref{sec:ch_exactness_polyhedral} presents sufficient conditions for convex hull exactness under the assumption that the set of convex Lagrange multipliers is polyhedral. After providing a short proof sketch of this result, we then present a number of example applications. Specifically, we will see how to derive convex hull exactness results for the generalized trust region subproblem, a quadratic problem over a ``Swiss cheese''-like domain, and QCQPs with large amounts of symmetry and polyhedral convex Lagrange multipliers.

\item \cref{sec:convex_hull_exactness} shows how to modify the sufficient conditions presented in \cref{sec:ch_exactness_polyhedral} when the assumption that the set of convex Lagrange multipliers is polyhedral is dropped. This sufficient condition is then used to derive convex hull exactness results for a specific mixed-binary set with complementarity constraints as well as for general quadratic matrix programs.

\item Using the terminology and notation introduced in \cref{subsec:rog_prelim}, we study necessary and/or sufficient conditions under which the intersection of the positive semidefinite cone with a set of (possibly infinitely many) homogeneous LMIs is an ROG cone in \cref{sec:ROG}.

We start by presenting a few consequences of the ROG property in terms of objective value exactness and convex hull exactness in \cref{subsec:QCQP_exactness}. This section also illustrates how results on the ROG property of convex cones can be translated into inhomogeneous SDP exactness results, variants of the S-lemma, and SDP-based convex hull descriptions of quadratically constrained sets. 
We present a few applications of these results, and comment on how ROG-based sufficient conditions for the SDP exactness (objective or convex hull) differ from the previous SDP exactness conditions presented in \cref{sec:ch_exactness_polyhedral,sec:obj_val,sec:convex_hull_exactness}.

In \cref{sec:ROG_LMIs_to_LMEs,sec:ROG_operations,sec:ROG_quadratic_sol}, we build our toolset to study ROG sets. Specifically, in \cref{sec:ROG_LMIs_to_LMEs} we examine their facial structure in terms of how the ROG property behaves when we convert some of the LMIs to LMEs (these results are  particularly useful for analyzing spectrahedral sets defined by finitely many LMIs/LMEs). We present  simple operations preserving the ROG property in \cref{sec:ROG_operations}. We discuss in \cref{sec:ROG_quadratic_sol} the characterization of the ROG property in terms of the existence of nonzero solutions of quadratic systems. 

We discuss a number of sufficient conditions for the ROG property in \cref{sec:sufficient_conditions} and illustrate how to recover some well-known ROG results related to the TRS and the TRS with an additional linear constraint.

In \cref{sec:necessary_conditions}, we give a complete characterization of ROG cones defined by two LMIs, and establish that  there exist simple certificates of the ROG property in this case.  We additionally give a short proof sketch of this characterization in the most interesting case.

In \cref{subsec:quadratic_ratios}, we demonstrate how to apply our ROG toolset to derive objective value exactness results for the problem of minimizing a \emph{ratio} of quadratic functions over a quadratically constrained domain.
\end{enumerate}

We present comparisons of these results with the literature in further detail throughout the document. \subsection{Notation}
For nonnegative integers $m\leq n$, define $[n]\coloneqq\{1,\ldots,n\}$ and $[m,n]\coloneqq \set{m,m+1,\dots,n-1,n}$. 
Let $\R_+$ denote the nonnegative reals.
Let $\S^n$ denote the set of real symmetric $n\by n$ matrices and $\S^n_+$ the cone of positive semidefinite matrices.
We write $A\succeq 0$ (respectively, $A\succ 0$) if $A$ is positive semidefinite (respectively, positive definite). 
Given $A\in\S^n$, let $\range(A)$ and $\ker(A)$ denote the range and kernel of $A$ respectively.
For $M\in \R^{n\by n}$, let $\Sym(M)\coloneqq\tfrac{M + M^\top}{2}\in\S^n$.
For $a\in\R^n$, let $\Diag(a)$ denote the diagonal matrix $A\in\S^n$ with diagonal entries $A_{i,i} = a_i$ for all $i\in[n]$.
Let $0_n,I_n\in\S^n$ denote the $n\by n$ zero matrix and identity matrix respectively; we will simply write $0$ or $I$ when the dimension is clear from context.
We will overload notation and also let $0_n\in\R^n$ denote the zero vector; whether $0$ or $0_n$ is a scalar, vector, or matrix will be clear from context.
For $W$ a subspace of $\R^n$ with dimension $k$, a surjective map $U:\R^k \to W$ and $A\in\S^n$, let $A_W$ denote the restriction of $A$ to $W$, i.e., $A_W = U^*AU$. When the map $U$ is inconsequential, we will omit specifying it.
For $a\in\R^n$, let $\Pi_W a\in W$ denote the orthogonal projection of $a$ onto $W$.
For $u\in W$ and $v\in W^\perp$, let $u \oplus v$ denote their direct sum.
For $A\in \S^n$ and $B\in\S^m$, let $A\oplus B\in\S^{n+m}$ and $A\otimes B\in \S^{nm}$ denote the direct sum and Kronecker product of $A$ and $B$ respectively.
For a subset $\cD$ of some Euclidean space (e.g., $\R^n$ or $\S^n$)
let
$\cD^\circ$,
$\inter(\cD)$,
$\rint(\cD)$,
$\rbd(\cD)$,
$\extr(\cD)$,
$\conv(\cD)$,
$\clconv(\cD)$,
$\cone(\cD)$,
$\clcone(\cD)$,
$\spann(\cD)$,
$\aff(\cD)$,
$\dim(\cD)$,
$\aff\dim(\cD)$ and
$\cD^\perp$
denote the polar, interior, relative interior, relative boundary, extreme points, convex hull, closed convex hull, conic hull, closed conic hull,
lineal hull, affine hull, dimension, affine dimension, and orthogonal complement of $\cD$, respectively.
We will write $\cF\trianglelefteq\cD$ to denote the fact that $\cF$ is a face of $\cD$.
Let $\L^{n+1}\coloneqq \set{z=(x,t)\in\R^{n}\times\R:\, \norm{x}_2\leq t}$ denote the second-order cone (SOC) in $\R^{n+1}$. \section{Preliminaries}\label{sec:Prelim}

\subsection{Problem setup}\label{sec:Setup}
In this tutorial, we will restrict ourselves to considering \textit{epigraphs} of quadratically constrained quadratic programs. See \cite{wang2020geometric} for similar results on general quadratically constrained sets. Let $\cM_I,\cM_E\subseteq\S^{n+1}$ and define $\cM\coloneqq \cM_I\cup\cM_E \cup \set{-M:\, M\in\cM_E}$. Let
\begin{align*}
\cX \coloneqq \set{x\in\R^n:\, \begin{array}
	{l}
	\begin{pmatrix}x\\1\end{pmatrix}^\top M \begin{pmatrix}x\\1\end{pmatrix}\leq 0 ,\,\forall M\in\cM_I\\
	\begin{pmatrix}x\\1\end{pmatrix}^\top M \begin{pmatrix}x\\1\end{pmatrix}= 0 ,\,\forall M\in\cM_E
\end{array}}.
\end{align*}
We will routinely think of a matrix $M\in\S^{n+1}$ as a block matrix with the form $M = \begin{smallpmatrix}
	A & b\\b^\top & c\end{smallpmatrix}\in\S^{n+1}$ for $A\in\S^n$, $b\in\R^n$ and $c\in\R$. In this form, $M\in\S^{n+1}$ defines a quadratic function on $x\in\R^n$ via the map
\begin{align*}
x\mapsto \begin{pmatrix}
	x\\
	1
\end{pmatrix}^\top \begin{pmatrix}
	A & b\\b^\top & c\end{pmatrix} \begin{pmatrix}
	x\\
	1
\end{pmatrix} = x^\top A x + 2b^\top x + c.
\end{align*}
In other words, $\cX\subseteq\R^n$ is a domain defined by a collection of quadratic inequality constraints, $\cM_I$, and a collection of quadratic equality constraints, $\cM_E$.

Given an additional quadratic objective function $\Mobj = \begin{smallpmatrix}
	A_\obj & b_\obj\\b_\obj^\top & c_\obj
\end{smallpmatrix}\in\S^{n+1}$, we will consider the QCQP
\begin{align}\label{eq:opt}
\Opt\coloneqq \inf_{x\in\R^n}\set{\begin{pmatrix}
		x\\1
	\end{pmatrix}^\top \Mobj \begin{pmatrix}
		x\\1
	\end{pmatrix}:\, x\in\cX}.
\end{align}
Let $\cD$ denote the epigraph of this QCQP, i.e.,
\begin{align}\label{eq:D}
\cD &\coloneqq \set{(x,t)\in\R^n\times\R:\, \begin{array}
	{l}
	\begin{pmatrix}
		x\\1
	\end{pmatrix}^\top \Mobj \begin{pmatrix}
		x\\1
	\end{pmatrix}\leq t\\
	x\in\cX
\end{array}}.
\end{align}

In \cref{sec:ch_exactness_polyhedral,sec:obj_val,sec:convex_hull_exactness}, we will additionally assume that $\cM_I$ and $\cM_E$ are both finite. In such a setting, it will be convenient to label the matrices in $\cM_I\cup\cM_E$. Let $m_I$ and $m_E$ denote the number of inequality and equality constraints respectively and set $m \coloneqq m_I + m_E$ to be the total number of constraints.
Let $\cM_I = \set{M_1,\dots,M_{m_I}}$ and $\cM_E = \set{M_{m_I+1}, \dots, M_m}$. For $i\in[m]$, we will write $M_i = \begin{smallpmatrix}
	A_i & b_i\\
	b_i^\top & c_i
\end{smallpmatrix}$ and define
\begin{align*}
q_i(x) = \begin{pmatrix}
	x\\
	1
\end{pmatrix}^\top M_i \begin{pmatrix}
	x\\
	1
\end{pmatrix} = x^\top A_i x + 2b_i^\top x + c_i.
\end{align*}

A large part of our discussion
in \cref{sec:ch_exactness_polyhedral,sec:obj_val,sec:convex_hull_exactness}
will revolve around the Lagrangian dual and aggregation. We thus introduce the following notation.
Let $q:\R^n\to\R^m$ denote the vector-valued function with $q(x)_i = q_i(x)$. Define $A(\gamma)\coloneqq \sum_{i=1}^m \gamma_i A_i$. Similarly define $b(\gamma)$ and $c(\gamma)$. Note that with these definitions
\begin{align*}
\sum_{i=1}^m \gamma_i q_i(x) = \ip{\gamma,q(x)} = x^\top A(\gamma)x+2b(\gamma)^\top x + c(\gamma).
\end{align*}

\subsection{The projected semidefinite programming relaxation}\label{sec:projected_SDP}

A natural convex relaxation of $\cD$ is given by the standard (Shor) semidefinite programming (SDP) relaxation. To simplify our notation, 
given an arbitrary set $\cM\subseteq\S^{n+1}$, we define
\begin{align}\label{eq:S_M}
\cS(\cM)&\coloneqq \set{Z\in\S^{n+1}:\, \begin{array}
	{l}
	\ip{M, Z}\leq 0 ,\,\forall M\in\cM\\
	Z\succeq 0
\end{array}}.
\end{align}
Note that $\cS(\cM)$ is a closed convex cone. We will revisit this set in \cref{subsec:rog_prelim,sec:ROG}.

\begin{definition}
The \emph{projected SDP relaxation of the QCQP} \eqref{eq:opt} is
\begin{align}\label{eq:optsdp}
\Opt_\textup{SDP}&\coloneqq \inf_{x\in\R^n}\set{\ip{M_\obj,Z}:\, \begin{array}
	{l}
	\exists Z = \begin{pmatrix}
		X & x\\ x^\top & 1
	\end{pmatrix}\in\S^{n+1}:\\
	Z\in\cS(\cM)
\end{array}}.
\end{align}

The \emph{projected SDP relaxation of} $\cD$ is
\begin{equation}\label{eq:projected_SDP}
\cD_\textup{SDP}\coloneqq \set{(x,t)\in\R^n\times\R:\, \begin{array}
	{l}
	\exists Z = \begin{pmatrix}
		X & x\\x^\top & 1
	\end{pmatrix}\in\S^{n+1}:\\
	\ip{\Mobj,Z}\leq t\\
	Z\in\cS(\cM)
\end{array}}.\qedhere
\end{equation}
\end{definition}

By taking $X = xx^\top$, it is clear that $\cD\subseteq\cD_\textup{SDP}$. Furthermore, as $\cD_\textup{SDP}$ is the projection of a convex set, it is itself convex. In particular, $\conv(\cD)\subseteq\cD_\textup{SDP}$ and $\Opt\geq \Opt_\textup{SDP}$.

This tutorial presents recent work
\cite{wang2020convex,wang2021tightness,wang2020geometric,argue2020necessary}
towards understanding when equality holds in these relations. Specifically, we will say that a QCQP of the form \eqref{eq:opt} has objective value exactness (resp.\ convex hull exactness) if
$\Opt = \Opt_\textup{SDP}$ (resp.\ $\clconv(\cD) = \cD_\textup{SDP}$).

\subsection{Preliminaries on aggregation}\label{sec:aggregation}
In this subsection, we will present an alternative description of $\cD_\textup{SDP}$ which will highlight the role played by the set of \textit{convex Lagrange multipliers}. We will assume that $\cM_I$ and $\cM_E$ are both finite for the remainder of this subsection.

\begin{definition}
The set of \textit{convex Lagrange multipliers} associated with $\cD$ is
\begin{align*}
\Gamma&\coloneqq \set{(\gobj,\gamma)\in\R\times\R^m:\, \begin{array}
	{l}
	\gobj A_\obj + A(\gamma)\succeq 0\\
	\gobj\geq 0\\
	\gamma_i \geq 0 ,\,\forall i\in[m_I]
\end{array}}.\qedhere
\end{align*}
\end{definition}
Note that $\Gamma$ is a convex cone. This set and its variants are known to be important in studying SDP relaxations of QCQPs; see for example \cite[Chapter 13.4.2]{wolkowicz2012handbook} where a lifted version of this set is used to describe the SDP relaxation of a QCQP (cf.\ \cref{lem:cS_SDP_description}).

We will make the following definiteness assumption which can be interpreted as requiring the dual of \eqref{eq:optsdp} be strictly feasible. This is a standard assumption in the literature.

\begin{assumption}
	\label{as:definiteness}
	There exists $(\gobj^*,\gamma^*)\in\Gamma$ such that $\gobj^* A_\obj + A(\gamma^*)\succ 0$.
\end{assumption}

A version of the following description specifically for the SDP objective value was first recorded by \citet{fujie1997semidefinite}.
\begin{lemma}[{\cite[Lemma 1]{wang2021tightness}}]
\label{lem:cS_SDP_description}
Suppose \cref{as:definiteness} holds. Then,
\begin{align*}
\cD_\textup{SDP} = \set{(x,t)\in\R^n\times \R:\, \gobj(\qobj(x) - t) + \ip{\gamma, q(x)}\leq 0,\,\forall (\gobj,\gamma)\in\Gamma}.
\end{align*}
In particular, $\cD_\textup{SDP} = \set{(x,t)\in\R^n\times \R:\, (\qobj(x)-t, q(x)) \in\Gamma^\circ}$ where $\Gamma^\circ$ is the polar cone of $\Gamma$.
\end{lemma}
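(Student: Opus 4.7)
The plan is to prove both set inclusions, with the ``$\subseteq$'' containment being a direct consequence of aggregation and positive semidefiniteness, and the ``$\supseteq$'' containment reducing to an SDP theorem of alternatives whose required Slater-type condition is supplied exactly by \cref{as:definiteness}.

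For the easy direction, I would take $(x,t)\in\cD_\textup{SDP}$ with a certifying matrix $Z=\begin{smallpmatrix}X & x\\ x^\top & 1\end{smallpmatrix}\in\cS(\cM)$ satisfying $\ip{\Mobj,Z}\leq t$, and define $Y\coloneqq X-xx^\top$; the Schur complement yields $Y\succeq 0$. A direct block-matrix calculation gives the identities $\ip{M_i,Z}=\ip{A_i,Y}+q_i(x)$ for each $i\in[m]$ and $\ip{\Mobj,Z}=\ip{A_\obj,Y}+\qobj(x)$. For any $(\gobj,\gamma)\in\Gamma$, aggregating the defining (in)equalities of $\cS(\cM)$ together with the objective inequality using multipliers $(\gobj,\gamma)$---legitimate because of the sign constraints built into $\Gamma$ (and because equality-constraint multipliers $\gamma_i$ for $i>m_I$ are unconstrained in sign)---yields
\begin{align*}
\ip{\gobj A_\obj+A(\gamma),\,Y}+\gobj(\qobj(x)-t)+\ip{\gamma,q(x)}\leq 0.
\end{align*}
Since $\gobj A_\obj+A(\gamma)\succeq 0$ and $Y\succeq 0$, the first inner product is nonnegative, and the desired inequality $\gobj(\qobj(x)-t)+\ip{\gamma,q(x)}\leq 0$ follows. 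The second identity of the lemma, expressing this as $(\qobj(x)-t,q(x))\in\Gamma^\circ$, is then immediate from the definition of the polar cone.

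For the reverse direction, I would argue the contrapositive: assume $(x,t)\notin\cD_\textup{SDP}$, i.e., no $Y\succeq 0$ satisfies the linear system $\ip{A_i,Y}+q_i(x)\leq 0$ for $i\in[m_I]$, $\ip{A_i,Y}+q_i(x)=0$ for $i\in[m_I+1,m]$, and $\ip{A_\obj,Y}+\qobj(x)-t\leq 0$. This is a conic feasibility problem over $\S^{n+1}_+$, and its Lagrangian dual is precisely $\sup_{(\gobj,\gamma)\in\Gamma}\bigl(\gobj(\qobj(x)-t)+\ip{\gamma,q(x)}\bigr)$ (the dual variable for $Y\succeq 0$ must vanish since the primal cost is zero). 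The main obstacle is justifying strong duality here: a conic theorem of alternatives (Farkas-type for PSD) guarantees that primal infeasibility forces the dual supremum to be strictly positive, \emph{provided} there is a strictly feasible point for the relevant dual program. \cref{as:definiteness} supplies exactly such a point via $(\gobj^*,\gamma^*)\in\Gamma$ with $\gobj^*A_\obj+A(\gamma^*)\succ 0$. The delicate step is propagating this strict-PSD certificate through the conic structure to close the set $\bigl\{(\ip{A_\obj,Y}+\qobj(x),\ip{A_i,Y}+q_i(x))_{i\in[m]}:Y\succeq 0\bigr\}+K$ (where $K$ encodes the sign pattern of the constraints) so that a standard closed-convex separation argument produces the desired $(\gobj,\gamma)\in\Gamma$ with $\gobj(\qobj(x)-t)+\ip{\gamma,q(x)}>0$; the strict positive-definiteness is exactly what rules out recession directions that would otherwise obstruct closedness.
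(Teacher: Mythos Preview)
The paper does not give its own proof of this lemma; it is quoted from \cite[Lemma~1]{wang2021tightness} (with an attribution to \cite{fujie1997semidefinite} for the objective-value version), so there is no in-paper argument to compare against. Your approach via aggregation for $\subseteq$ and conic duality for $\supseteq$ is the standard one and is correct.

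The forward direction is complete as written. For the reverse direction, the idea is right but the final paragraph is a sketch rather than a proof: you correctly identify that \cref{as:definiteness} supplies a Slater point for the dual, but you stop short of actually invoking the duality theorem. A clean way to finish is to phrase the existence of $Y\succeq 0$ as the primal conic program $p^*=\inf\{0:Y\succeq 0,\ \ip{A_\obj,Y}\leq t-\qobj(x),\ \ip{A_i,Y}\leq -q_i(x)\ (i\leq m_I),\ \ip{A_i,Y}=-q_i(x)\ (i>m_I)\}$, whose dual is exactly $d^*=\sup_{(\gobj,\gamma)\in\Gamma}\gobj(\qobj(x)-t)+\ip{\gamma,q(x)}$. \cref{as:definiteness} is Slater's condition for the dual, so standard conic duality gives: if $d^*<+\infty$ then $d^*=p^*$. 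Since $p^*\in\{0,+\infty\}$, primal infeasibility forces $d^*=+\infty$ (otherwise $d^*=p^*=+\infty$ contradicts $d^*<+\infty$), yielding the desired separating $(\gobj,\gamma)\in\Gamma$. Stating it this way would replace the somewhat vague talk of ``closing the image set'' and ``ruling out recession directions,'' which as written does not quite constitute an argument.
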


In other words, \cref{lem:cS_SDP_description} states that $\cD_\textup{SDP}$ is given by imposing the convex quadratic constraints on $(x,t)\in\R^n\times\R$ that can be obtained via Lagrange aggregation. We illustrate this in greater detail in the following example.

\begin{figure}
 	\centering
 	\hspace{6em}
	\begin{tikzpicture}[scale=1]
		\fill[fill=lightblue] (0,0) -- (0.75,0) -- (1.5,1.5) -- (0,0.75) -- cycle;
		\draw (0.75,0) -- (1.5,1.5) -- (0,0.75);

		\draw[->] (0,0) -- (2,0);
		\draw[->] (0,0) -- (0,2);
		\node[right] (a1') at (2,0) {$\gamma_1$};
		\node[left] (a2') at (0,2) {$\gamma_2$};

		\filldraw (0,0.75) circle (2pt);
		\draw[-, densely dashed] (0,0.75) -- (-1.25,-0.85);
		\filldraw (0,0) circle (2pt);
		\draw[-, densely dashed] (0,0) -- (0.6,-0.85);
		\filldraw (0.75,0) circle (2pt);
		\draw[-, densely dashed] (0.75,0) -- (2.7,-0.85);
		\filldraw (1.5,1.5) circle (2pt);
		\draw[-, densely dashed] (1.5,1.5) -- (6.5,-0.85);
	\end{tikzpicture}
	\\
 	\includegraphics[width=\textwidth / 8, valign=c]{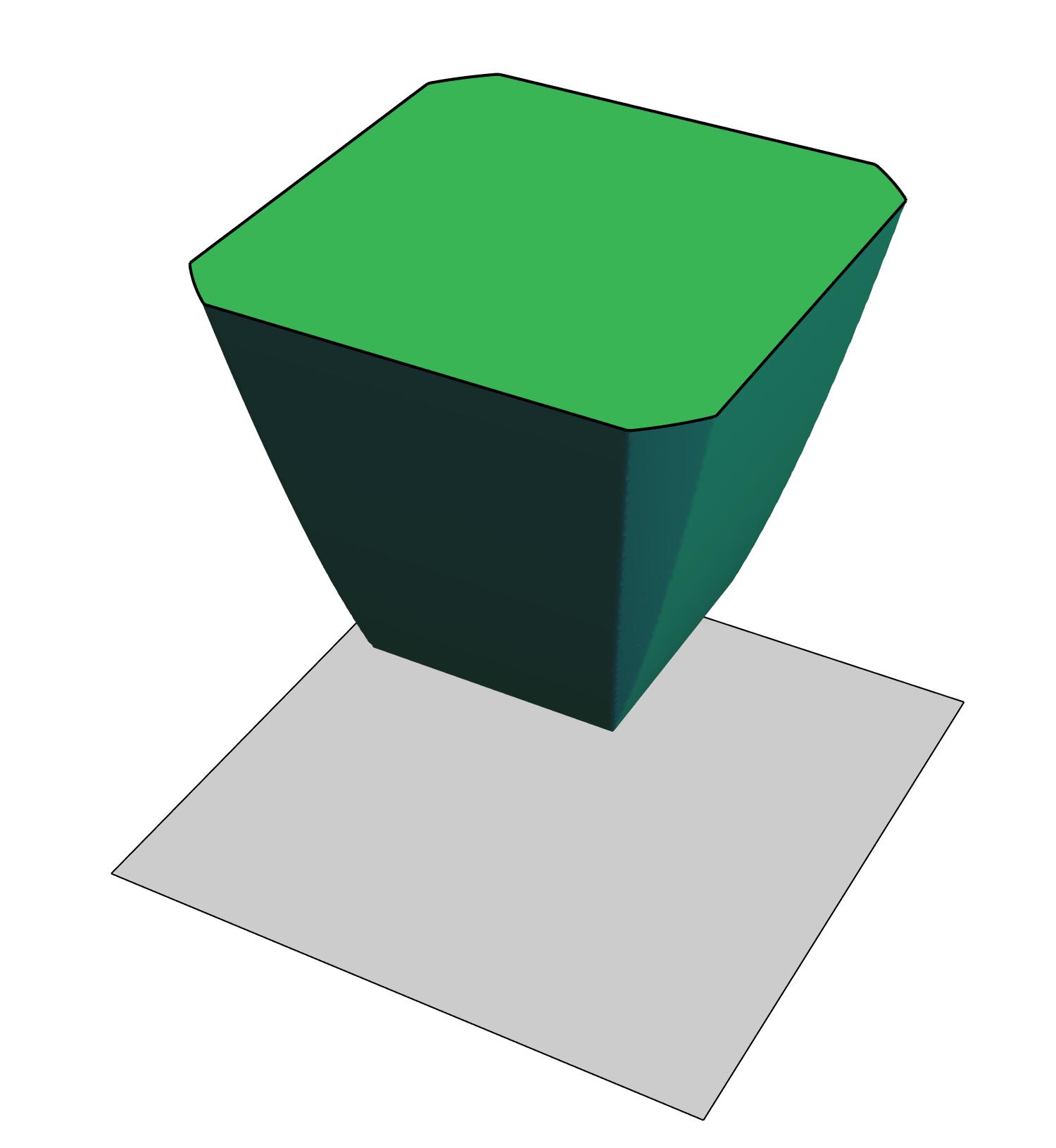}
 	$\,=\,$\includegraphics[width=\textwidth / 8, valign=c]{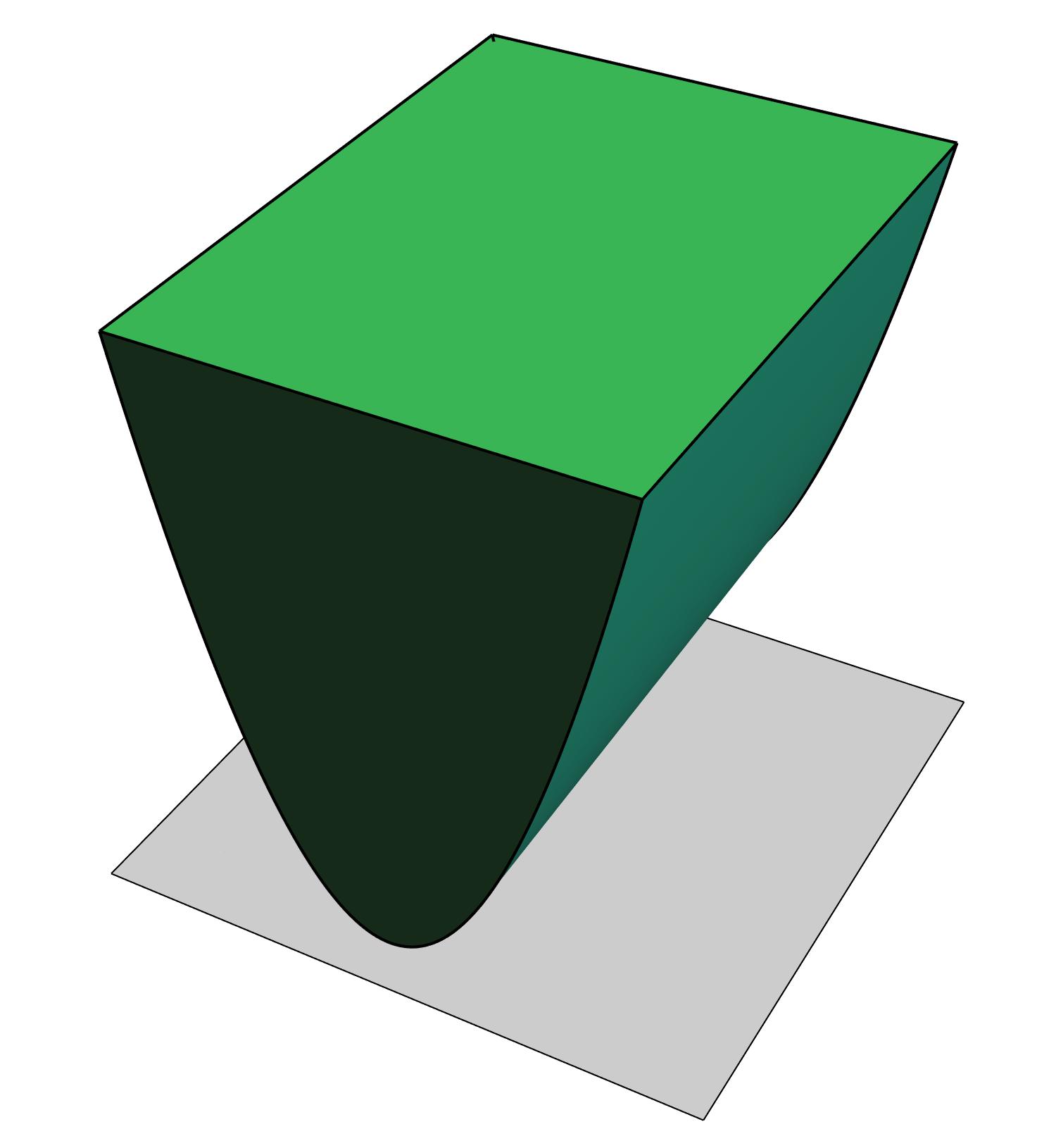}
 	$\cap$\includegraphics[width=\textwidth / 8, valign=c]{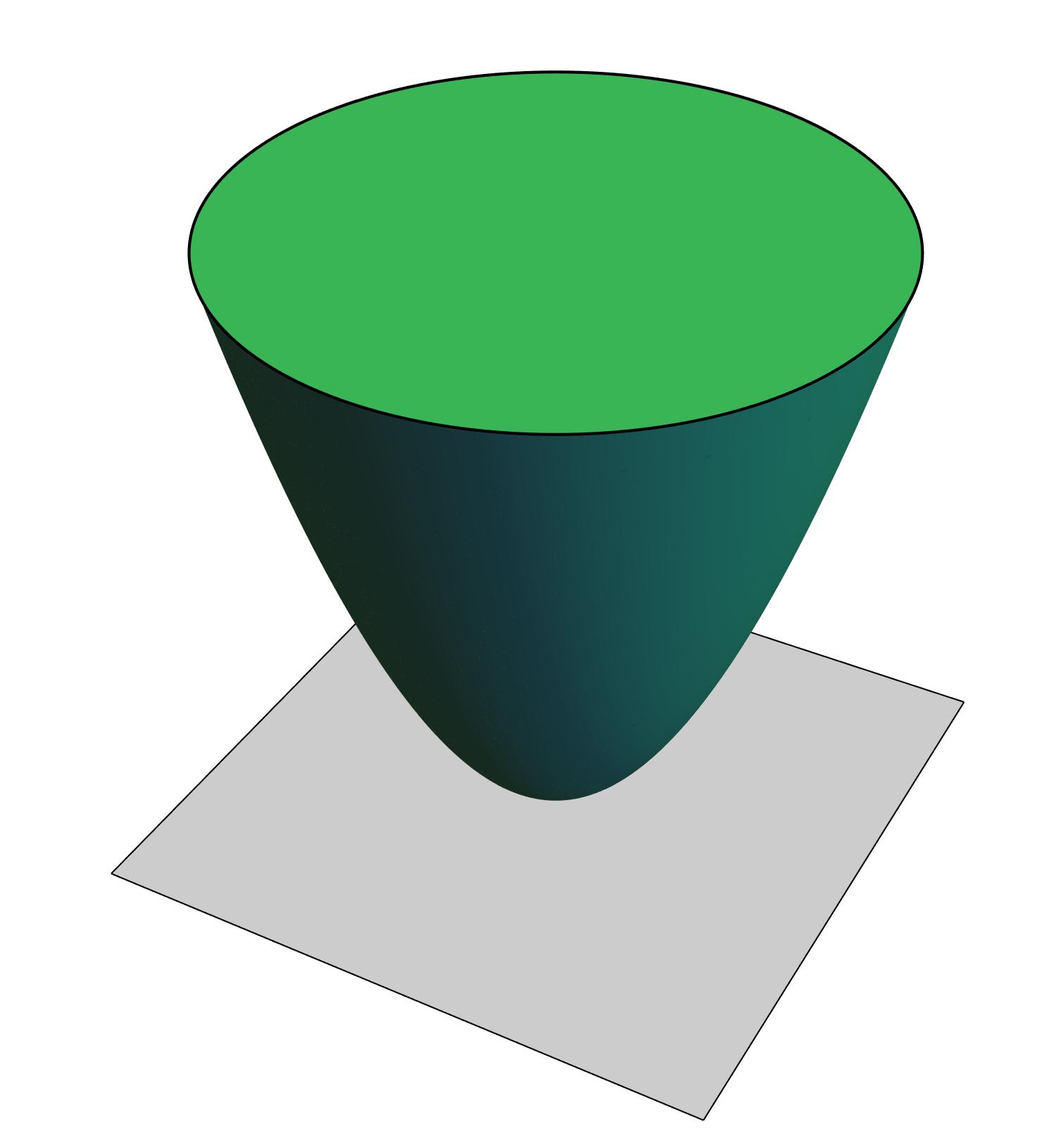}
 	$\cap$\includegraphics[width=\textwidth / 8, valign=c]{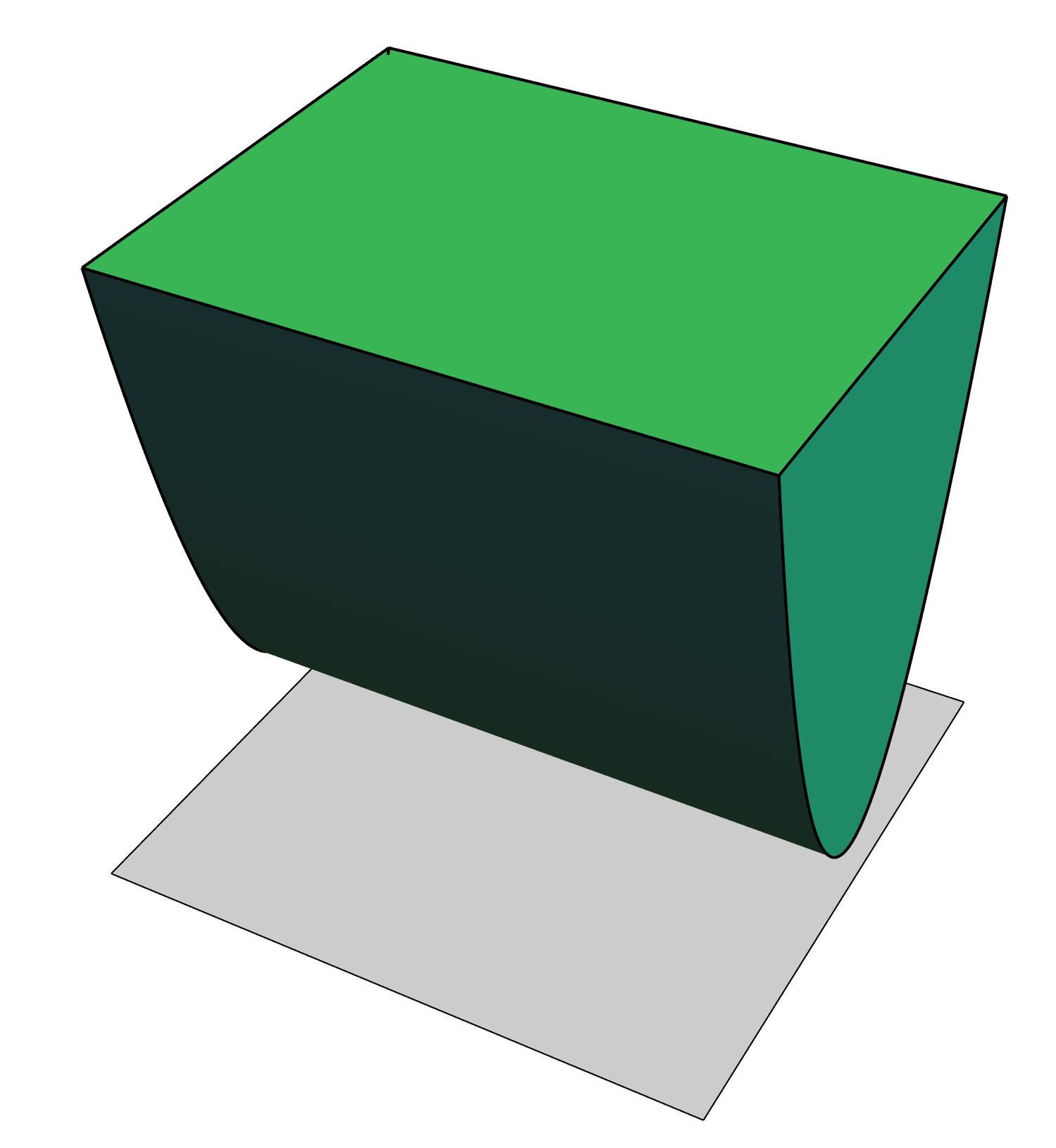}
 	$\cap$\includegraphics[width=\textwidth / 8, valign=c]{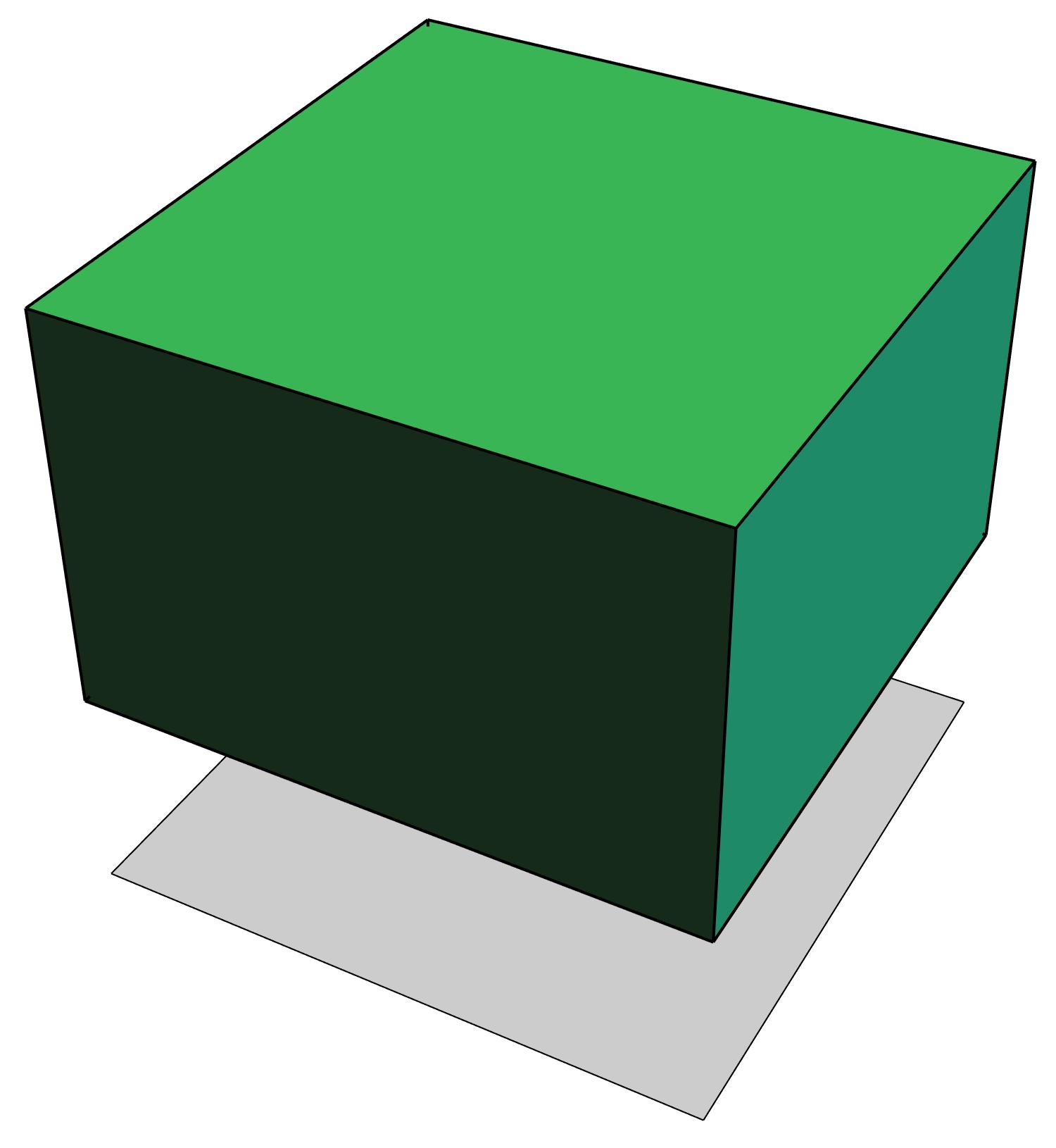}
 	\caption{
 	The blue region (first row) is a plot of the pairs $\gamma\in\R^2$ such that $(1,\gamma)\in\Gamma$ in \cref{ex:explicit_sdp}. \cref{lem:cS_SDP_description} then states that $\cD_\textup{SDP}$ (the leftmost set on the second row) is equal to the intersection of the sets $\set{(x,t)\in\R^2\times \R:\, q_\obj(x) - t + \ip{\gamma,q(x)}\leq 0}$ (the remaining sets on the bottom row) over the extreme points of this blue region.}
 	\label{fig:aggregation}
\end{figure}

\begin{example}
\label{ex:explicit_sdp}
Consider the following QCQP
\begin{align*}
\inf_{x\in\R^2}\set{x^\top A_\obj x:~ x^\top A_ix + 1 \leq 0,\,\forall i\in[2]\, },
\end{align*}
where
\begin{align*}
A_\obj = \begin{pmatrix}
	1 &\\& 1
\end{pmatrix},\quad
A_1 = \begin{pmatrix}
	-2 &\\& 1
\end{pmatrix},\quad
A_2 = \begin{pmatrix}
	1 &\\& -2
\end{pmatrix}.
\end{align*}
In this case, we may compute $\Gamma$ explicitly:
\begin{align*}
\Gamma &= \set{(\gobj, \gamma)\in\R\times\R^2:\, \begin{array}
	{l}
	\gobj A_\obj + A(\gamma)\succeq 0\\
	\gobj\geq 0\\
	\gamma\geq 0
\end{array}}\\
&= \set{(\gobj, \gamma)\in\R\times\R^2:\, \begin{array}
	{l}
	\gobj -2\gamma_1 + \gamma_2\geq 0\\
	\gobj + \gamma_1 - 2\gamma_2 \geq 0\\
	\gobj\geq 0\\
	\gamma\geq 0
\end{array}}\\
&= \cone\set{
	\begin{pmatrix}1\\	0\\	0\end{pmatrix},
	\begin{pmatrix}1\\	1/2\\	0\end{pmatrix},
	\begin{pmatrix}1\\	0\\	1/2\end{pmatrix},
	\begin{pmatrix}1\\	1\\	1\end{pmatrix}
	}.
\end{align*}
As \cref{as:definiteness} holds (e.g., take $(\gamma_\obj^*,\gamma) = (1,0,0)$), \cref{lem:cS_SDP_description} implies that
\begin{align*}
\cD_\textup{SDP} &= \set{(x,t)\in\R^2\times \R:\, \begin{array}
	{l}
	\gobj(\qobj(x) - t) + \ip{\gamma, q(x)}\leq 0,\,\forall (\gobj,\gamma)\in\Gamma
\end{array}}\\
&= \set{(x,t)\in\R^n\times \R:\, \begin{array}
	{l}
	x_1^2+x_2^2 \leq t\\
	(3/2)x_2^2+1/2 \leq t\\
	(3/2)x_1^2+1/2\leq t\\
	2\leq t
\end{array}}.
\end{align*}
Here, the second line follows as $(\gamma_\obj,\gamma)\mapsto \gamma_\obj(q_\obj(x) - t) + \ip{\gamma,q(x)}$ is linear so that it suffices to impose the constraint in the first line for any set of aggregation weights which generate $\Gamma$. See \cref{fig:aggregation} for a visual depiction of the set $\cD_\textup{SDP}$.

A key takeaway from this example is that when $\Gamma$ is ``simple,'' \cref{lem:cS_SDP_description} gives us a powerful tool to explicitly understand both $\cD_\textup{SDP}$ and $\Opt_\textup{SDP}$.
\end{example}

\begin{remark}
Note that under \cref{as:definiteness}, \cref{lem:cS_SDP_description} implies that $\cD_\textup{SDP}$ is closed.
\end{remark}

\subsection{Rank-one generated subsets of $\S^{n+1}_+$}
\label{subsec:rog_prelim}

In \cref{sec:ROG}, we will work directly with sets of the form  $\cS(\cM)$ defined in \eqref{eq:S_M} and examine the following property of these sets. In contrast to \cref{sec:ch_exactness_polyhedral,sec:convex_hull_exactness,sec:obj_val}, we will not always assume that $\cM$ is finite in this section.
\begin{definition}\label{def:ROG}
A closed convex cone $\cS\subseteq \S^{n+1}_+$ is \emph{rank-one generated} (ROG) if
\begin{align*}
\cS &= \conv(\cS \cap \set{zz^\top:\, z\in\R^{n+1}}).\qedhere
\end{align*}
\end{definition}
\begin{remark}
Note that for a closed convex cone $\cS\subseteq\S^{n+1}_+$, we have $\conv(\cS\cap\set{zz^\top:\, z\in\R^{n+1}}) = \clconv(\cS\cap \set{zz^\top:\, z\in\R^{n+1}})$.
\end{remark}

We will observe in \cref{sec:ROG} that the ROG property 
can be used to derive both objective value and convex hull exactness results.
For example, 
\cref{lem:SDPtightness} states that if $\cS(\cM)\subseteq\S^{n+1}_+$ is ROG, then
objective value exactness holds
\emph{for every} choice of objective function $M_{\obj}$ such that the SDP value is finite.

Recall the following definition.
\begin{definition}
For $Z\in\S^{n+1}$ nonzero, let $\R_+ Z \coloneqq \set{\alpha Z:\, \alpha\geq 0}$ denote the \emph{ray spanned by} $Z$.
We say that $\R_+Z$ is an \emph{extreme ray} of $\cS$ if for any $X,Y\in \cS$ such that $Z = (X+Y)/2$, we have $X,Y\in\R_+Z$.\qedhere
\end{definition}

The following lemma gives an alternate characterization of ROG cones in terms of its extreme rays.
\begin{lemma}[{\cite[Lemma 1]{argue2020necessary}}]
\label{lem:rog_iff_extreme_rank_one}
Let $\cS\subseteq\S^{n+1}_+$ be a closed convex cone. Then, $\cS$ is ROG if and only if for each extreme ray $\R_+ Z$ of $\cS$, we have $\rank(Z)=1$.
\end{lemma}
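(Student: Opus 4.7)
The plan is to prove the two implications separately by unpacking definitions. For the forward direction (``ROG implies every extreme ray is rank-one''), I would take an extreme ray $\R_+ Z$ of $\cS$ with $Z\neq 0$ and use the ROG decomposition of \cref{def:ROG} to write $Z$ as a finite convex combination of matrices in $\cS\cap\set{zz^\top:\, z\in\R^{n+1}}$. After absorbing the convex weights into the summands (legal because $\cS$ is a cone and $\alpha zz^\top=(\sqrt\alpha z)(\sqrt\alpha z)^\top$), this becomes $Z=\sum_{i=1}^k Y_i$ with each $Y_i\in\cS$ of rank at most one. Writing $Z = Y_1 + (Y_2+\cdots+Y_k)$ with both summands in $\cS$ (since $\cS$ is closed under addition), the extreme-ray property forces $Y_1\in\R_+ Z$; iterating on the remainder yields $Y_i=\alpha_i Z$ for some $\alpha_i\geq 0$. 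Since $Z\neq 0$, at least one $\alpha_i>0$, so $Z$ is a positive multiple of the rank-$\leq 1$ matrix $Y_i$, and therefore $\rank(Z)=1$.

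For the converse direction, I would exploit that $\cS$, being a closed convex cone contained in the pointed cone $\S^{n+1}_+$ and living in finite dimensions, itself equals the closed conic hull of its extreme rays. This is a standard consequence of the Krein--Milman theorem applied to a compact base of $\cS$ (e.g., sliced out by a hyperplane strictly separating $\cS\setminus\{0\}$ from the origin in direction $-I$). By hypothesis every extreme ray is spanned by some $zz^\top$, so the extreme-ray generators all lie in $\cS\cap\set{zz^\top:\, z\in\R^{n+1}}$, giving
\[
\cS \;\subseteq\; \clcone(\cS\cap\set{zz^\top:\, z\in\R^{n+1}}) \;=\; \clconv(\cS\cap\set{zz^\top:\, z\in\R^{n+1}}),
\]
where the equality uses that this set is closed under positive scaling and contains $0$. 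The remark preceding the lemma now collapses the closure to yield $\cS\subseteq\conv(\cS\cap\set{zz^\top:\, z\in\R^{n+1}})$; the reverse inclusion is immediate from convexity of $\cS$, so \cref{def:ROG} is verified.

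The only nontrivial ingredient is the appeal, in the converse direction, to the representation of a pointed closed convex cone as the closed conic hull of its extreme rays; I would simply cite this as a standard result rather than reprove it. The forward direction is purely algebraic, and the finiteness of the decomposition $Z=\sum_i Y_i$ comes for free from the definition of convex hull, so no Carath\'eodory-type bound is needed. Overall I expect this lemma to be a clean bookkeeping argument whose real value lies in enabling the subsequent ROG results that exploit extreme-ray structure.
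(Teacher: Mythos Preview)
Your proposal is correct. The paper does not supply its own proof of this lemma; it simply cites \cite[Lemma 1]{argue2020necessary}. Your argument is the standard one and would be accepted as a self-contained proof: the forward direction is exactly the observation that an extreme ray cannot be decomposed nontrivially inside $\cS$, so any finite rank-one decomposition furnished by the ROG property collapses onto $\R_+Z$; the converse is Minkowski/Krein--Milman on a compact base of the pointed cone $\cS$.

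Two very minor streamlinings you could make. First, in the forward direction you do not need to ``iterate on the remainder'': for any fixed $i$, writing $Z = Y_i + \sum_{j\neq i} Y_j$ already forces $Y_i\in\R_+Z$ directly, so every $Y_i$ is a nonnegative multiple of $Z$ in one shot. Second, in the converse direction the closure is never needed in the first place: because $\cS\subseteq\S^{n+1}_+$, the slice $\cS\cap\{X:\tr X=1\}$ is a compact base, and Minkowski's theorem gives $\cS$ as the \emph{conic} (not merely closed conic) hull of its extreme rays. This lets you bypass the appeal to the remark about $\conv=\clconv$ entirely, though your route through the remark is also valid.
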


In contrast to the above characterization, which relates the ROG property of a cone to primal properties (e.g., the rank of its extreme rays), in~\cref{sec:ROG} we will be concerned with understanding the ROG property of a cone $\cS(\cM)$ in terms of its defining inequalities $\cM$. 

\begin{remark}\label{rem:blekherman_comparison}
The ROG property is also relevant in the context of sum-of-squares (SOS) programming. Let
\begin{align*}
V \coloneqq \set{x\in\R^n:\, x^\top A_i x = 0,\,\forall i\in[m]}.
\end{align*}
It is possible to show that ``every quadratic form $A'\in\S^n$ which is nonnegative on $V$ is immediately nonnegative''\footnote{Formally, this is the property that $x^\top A' x\geq 0$ for all $x\in V$ $\implies$
$A'\in \S^n_+ + \spann(\set{A_i})$.} if and only if
\begin{align*}
\set{X\in\S^n_+:\, \ip{A_i, X} = 0 ,\,\forall i\in[m]}
\end{align*}
is ROG.
See \cite[Section 6]{blekherman2017sums} for additional applications and connections of the ROG property in real algebraic geometry and statistics.
\end{remark} \section{Objective value exactness and polyhedral $\Gamma$}
\label{sec:obj_val}

In this section, we take a first look at our framework which is predicated on understanding how the quantities $A(\gamma)$ and $b(\gamma)$ interact on faces of $\Gamma$.
In this section, we will assume that $\Gamma$ is polyhedral and present some conditions under which \textit{objective value exactness}, $\Opt_\textup{SDP} = \Opt$, holds.
Throughout this section, we will assume that $\cM_I$ and $\cM_E$ are both finite.

\begin{assumption}
\label{as:gamma_polyhedral}
The set of convex Lagrange multipliers, $\Gamma$, is polyhedral.
\end{assumption}

Although \cref{as:gamma_polyhedral} is rather restrictive, it is general enough to cover the case
where the set of quadratic forms $\set{A_\obj}\cup \set{A_i:\, i\in[m]}$ is diagonal or simultaneously diagonalizable---a class of QCQPs which have been studied extensively in the literature \cite{benTal2014hidden,locatelli2016exactness}. See also \cite{ramana1997polyhedra} for a characterization of polyhedral spectrahedra and a reduction showing that deciding whether a given spectrahedron is polyhedral is coNP-hard in general. See \cref{fig:gamma_polyhedral_examples} for an illustration of examples and nonexamples of \cref{as:gamma_polyhedral}.

\begin{remark}\label{rem:polyhedral_socp}
We note that under \cref{as:gamma_polyhedral} the set $\cD_\textup{SDP}$ is in fact SOC-representable. Specifically, under this assumption, we may pick a finite subset
$\set{(\gobj^{(j)},\gamma^{(j)})}\subseteq\Gamma$ that generates $\Gamma$, i.e.,
\begin{align*}
\Gamma = \cone\left(\set{(\gobj^{(j)},\gamma^{(j)})}\right).
\end{align*}
Then, as in \cref{ex:explicit_sdp}, $\cD_\textup{SDP}$ is defined by finitely many convex quadratic constraints,
\begin{align*}
\cD_\textup{SDP} = \set{(x,t)\in\R^n\times \R:\, \gobj^{(j)}\ \qobj(x) + \ip{\gamma^{(j)}, q(x)} \leq \gobj^{(j)}\ t ,\,\forall j},
\end{align*}
whence it is SOC-representable.

A number of authors \cite{benTal2014hidden,locatelli2016exactness} have noted that when the set of quadratic forms $\set{A_\obj,A_1,\dots,A_m}$ is simultaneously diagonalizable, that $\cD_\textup{SDP}$ is SOC-representable. It is not hard to show that simultaneous diagonalizability implies \cref{as:gamma_polyhedral} so that we also recover an SOC representability result under simultaneous diagonalizability. In contrast, our SOC representation is in the original space but may involve exponentially many constraints (one for each extreme ray of $\Gamma$), while the SOC representation of $\cD_\textup{SDP}$ given in \cite{benTal2014hidden,locatelli2016exactness} uses $n$ additional variables but only linearly many convex quadratic constraints.
\end{remark}

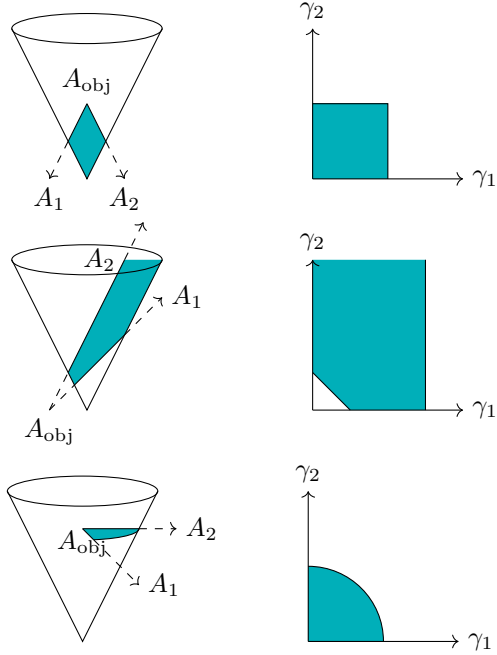
\begin{figure}
 	\centering
	\begin{tikzpicture}
		\fill[fill=lightblue] (0,1) -- (-0.25,0.5) -- (0,0) -- (0.25,0.5) -- cycle;
		\fill[fill=lightblue] (3,0) -- (4,0) -- (4,1) -- (3,1) -- cycle;
		\draw (0,1) -- (-0.25, 0.5);
		\draw[dashed, ->] (-0.25,0.5) -- (-0.5,0);
		\draw (0,1) -- (0.25, 0.5);
		\draw[dashed, ->] (0.25,0.5) -- (0.5,0);
		\node[above] (a0) at (0,1) {$A_\obj$};
		\node[below] (a1) at (-0.5,0) {$A_1$};
		\node[below] (a2) at (0.5,0) {$A_2$};
		\draw (4,0) -- (4,1) -- (3,1);

		\draw (0,2) ellipse (1cm and 0.2 cm);
		\draw (-1,2) -- (0,0) -- (1,2);
		\draw[->] (3,0) -- (5,0);
		\draw[->] (3,0) -- (3,2);
		\node[right] (a1') at (5,0) {$\gamma_1$};
		\node[above] (a2') at (3,2) {$\gamma_2$};
	\end{tikzpicture}

	\begin{tikzpicture}
		\fill[fill=lightblue] (-0.25,0.5) -- (0.5,2) -- (1,2) -- (0.5,1) -- (-1/6,1/3) -- cycle;
		\fill[fill=lightblue] (3.5,0) -- (4.5,0) -- (4.5,2) -- (3,2) -- (3,0.5) -- cycle;
		\draw [dashed](-0.5,0) -- (-0.25, 0.5);
		\draw (-0.25,0.5) -- (0.5,2);
		\draw[dashed,->](0.5,2) -- (0.75, 2.5);
		\draw[dashed] (-0.5,0) -- (-1/6, 1/3);
		\draw(-1/6,1/3) -- (0.5,1);
		\draw[dashed,->] (0.5,1) -- (1,1.5);
		\node[below] (a0) at (-0.5,0) {$A_\obj$};
		\node[left] (a1) at (0.5,2) {$A_2$};
		\node[right] (a2) at (1,1.5) {$A_1$};
		\draw (3.5,0) -- (3,0.5);
		\draw (4.5,0) -- (4.5,2);

		\draw (0,2) ellipse (1cm and 0.2 cm);
		\draw (-1,2) -- (0,0) -- (1,2);
		\draw[->] (3,0) -- (5,0);
		\draw[->] (3,0) -- (3,2);
		\node[right] (a1') at (5,0) {$\gamma_1$};
		\node[above] (a2') at (3,2) {$\gamma_2$};
	\end{tikzpicture}

	\begin{tikzpicture}
		\begin{scope}
			\clip  (0,1.5) ellipse (0.75cm and 0.15 cm);
			\fill[fill = lightblue] (0,1.5) -- (1.5,0) -- (1.5,1.5) -- cycle;
		\end{scope}
		\begin{scope}
			\clip (0,1.5) -- (1.5,0) -- (1.5,1.5) -- cycle;
			\draw  (0,1.5) ellipse (0.75cm and 0.15 cm);
		\end{scope}
		\begin{scope}
			\clip  (3,0) -- (5,0) -- (5,2) -- (3,2) -- cycle;
			\draw[fill = lightblue] (3,0) ellipse (1cm and 1cm);
		\end{scope}
		\draw (0,1.5) -- (0.75,1.5);
		\draw[dashed,->] (0.75,1.5) -- (1.25,1.5);
		\draw (0,1.5) -- (3/sqrt{416} , 1.5 - 3/sqrt{416});
		\draw[dashed,->] (3/sqrt{416} , 1.5 - 3/sqrt{416}) -- (0.75,0.75);
		\node[above] (a0) at (0,1) {$A_\obj$};
		\node[right] (a1) at (0.75,0.75) {$A_1$};
		\node[right] (a2) at (1.25,1.5) {$A_2$};

		\draw (0,2) ellipse (1cm and 0.2 cm);
		\draw (-1,2) -- (0,0) -- (1,2);
		\draw[->] (3,0) -- (5,0);
		\draw[->] (3,0) -- (3,2);
		\node[right] (a1') at (5,0) {$\gamma_1$};
		\node[above] (a2') at (3,2) {$\gamma_2$};
	\end{tikzpicture}
	\caption{In each row above, we illustrate first the set $\set{A_\obj + A(\gamma)\in\S^2:\, \gamma\in\R^2_+}$ on the left and the set $\set{\gamma\in\R^2_+:\, A_\obj + A(\gamma)\in\S^2_+} = \set{\gamma\in\R^2:\, (1,\gamma)\in\Gamma}$ on the right. It is not hard to show that, under \cref{as:definiteness}, $\Gamma$ is polyhedral if and only if the set $\set{\gamma\in\R^2:\, (1,\gamma)\in\Gamma}$ is polyhedral.}
	\label{fig:gamma_polyhedral_examples}
\end{figure}

\begin{definition}
Let $\cF$ be a face of $\Gamma$. We say that $\cF$ is a \emph{definite face} if there exists $(\gamma_\obj,\gamma)\in\cF$ such that $\gamma_\obj A_\obj + A(\gamma)\succ 0$.
Otherwise, we say that $\cF$ is a \emph{semidefinite face}.
\end{definition}
We highlight that based on this definition, a face of $\Gamma$ is \emph{either} a definite face or a semidefinite face. 

\begin{definition}\label{def:cV}
Given a subset $\cF\subseteq\R^{m+1}$, define
\begin{align*}
\cV(\cF) &\coloneqq \set{v\in\R^n:\, v^\top(\gobj A_\obj + A(\gamma))v = 0 ,\,\forall (\gobj,\gamma)\in\cF}.\qedhere
\end{align*}
\end{definition}

The set $\cV(\cF)$ plays an important role in our analysis. The main property we use of this set is the following: Suppose $(\hat x,\hat t)\in\cD_\textup{SDP}$ and $\cF$ is the face of $\Gamma$ exposed by $(q_\obj(\hat x) - \hat t, q(\hat x))$. Then, for $x'\in \cV(\cF)$ and $t'\in\R$, we have that the quadratic constraints associated with $(\gobj,\gamma)\in\cF$, i.e., $\gobj(q_\obj(x) - t) +\ip{\gamma,q(x)}\leq 0$, behave \emph{linearly} when we perturb $(\hat x,\hat t)$ in the direction $(x',2t')$.\footnote{The factor of $2$ here is not important and is only included to unify notation with future sections.} More formally, for any $(\gobj,\gamma)\in\cF$, $x'\in\cV(\cF)$ and $t'\in\R$, the function $\epsilon\mapsto \gobj(q_\obj(\hat x + \epsilon x') - (\hat t + 2\epsilon t')) + \ip{\gamma, q(\hat x + \epsilon x')}$ is linear.

\begin{remark}
Note that if $\cF$ is a subset of $\Gamma$, then
\begin{align*}
\cV(\cF) = \set{v\in\R^n:\, (\gobj A_\obj + A(\gamma))v = 0 ,\,\forall (\gobj,\gamma)\in\cF}
\end{align*}
is the shared zero eigenspace of the matrices corresponding to $\cF$. In particular $\cV(\cF)$ is a linear subspace. It is not hard to show that when $\cF$ is a semidefinite face of $\Gamma$, that $\dim(\cV(\cF))\geq 1$ (see \cite[Lemma 2]{wang2021tightness}). 
\end{remark}

We are now ready to state our first sufficient condition for objective value exactness.
The following result comes from \cite[Theorem 3]{wang2021tightness}.

\begin{theorem}
\label{thm:sdp_tightness_main}
Suppose \cref{as:gamma_polyhedral,as:definiteness} hold. If for every semidefinite face $\cF$ of $\Gamma$ we have 
\begin{align}
\label{eq:sdp_tightness_main_condition}
0\notin \Pi_{\cV(\cF)}\set{b_\obj + b(\gamma):\,
	(1,\gamma)\in\cF}, 
\end{align}
then any optimizer $(x^*,t^*)\in\argmin_{(x,t)\in\cD_{\textup{SDP}}} t$ satisfies $(x^*,t^*)\in\cD$. In particular, $\Opt=\Opt_\textup{SDP}$.
\end{theorem}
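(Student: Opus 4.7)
I would prove the claim by exhibiting a \emph{rank-one} SDP lift of $(x^*, t^*)$. Pick any $Z^* = \begin{smallpmatrix}X^* & x^* \\ (x^*)^\top & 1\end{smallpmatrix} \in \cS(\cM)$ witnessing $(x^*, t^*) \in \cD_\textup{SDP}$; by optimality, $\langle M_\obj, Z^*\rangle = t^*$, for otherwise $(x^*, \langle M_\obj, Z^*\rangle)$ would be a strictly better point in $\cD_\textup{SDP}$. Writing $\Delta X := X^* - x^*(x^*)^\top \succeq 0$, the goal becomes $\Delta X = 0$: then $Z^*$ is rank-one, which immediately gives $q_i(x^*) = \langle M_i, Z^*\rangle \le 0$ (or $=0$), i.e.\ $x^* \in \cX$, and $q_\obj(x^*) = \langle M_\obj, Z^*\rangle = t^*$, hence $(x^*, t^*) \in \cD$.

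Let $\cF^*$ be the face of $\Gamma$ exposed by $(q_\obj(x^*) - t^*, q(x^*)) \in \Gamma^\circ$. The first step is to show $\range(\Delta X) \subseteq \cV(\cF^*)$. Fix $(\gamma_\obj, \gamma) \in \cF^*$ and expand $\langle \gamma_\obj M_\obj + \sum_i \gamma_i M_i,\, Z^*\rangle$ in two ways. Using the block structure of $Z^*$ together with the identity $\gamma_\obj q_\obj(x^*) + \langle \gamma, q(x^*)\rangle = \gamma_\obj t^*$ (a restatement of $(\gamma_\obj, \gamma) \in \cF^*$), this inner product equals $\langle \gamma_\obj A_\obj + A(\gamma),\, \Delta X\rangle + \gamma_\obj t^*$. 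On the other hand, distributing and applying $\langle M_\obj, Z^*\rangle = t^*$ along with $\sum_i \gamma_i \langle M_i, Z^*\rangle \le 0$ (from $Z^* \in \cS(\cM)$ and the sign conditions on $\gamma_i$) upper-bounds it by $\gamma_\obj t^*$. Comparing, $\langle \gamma_\obj A_\obj + A(\gamma),\, \Delta X\rangle \le 0$, and since both matrices are PSD, equality must hold, which forces $(\gamma_\obj A_\obj + A(\gamma))\Delta X = 0$. Intersecting kernels over $(\gamma_\obj, \gamma) \in \cF^*$ yields $\range(\Delta X) \subseteq \cV(\cF^*)$.

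The second step is to show $\cF^*$ is a definite face; then $\cV(\cF^*) = \{0\}$, so $\Delta X = 0$, and we are done. By \cref{as:gamma_polyhedral} and \cref{rem:polyhedral_socp}, $\cD_\textup{SDP}$ is cut out by finitely many convex quadratic inequalities (one per extreme ray of $\Gamma$), and \cref{as:definiteness} supplies a constraint qualification for the convex program $\min_{\cD_\textup{SDP}} t$ at $(x^*, t^*)$. The KKT conditions therefore produce an aggregate multiplier $(1, \bar\gamma) \in \cF^*$ (the normalization $\bar\gamma_\obj = 1$ comes from stationarity in $t$) satisfying $(A_\obj + A(\bar\gamma))x^* + (b_\obj + b(\bar\gamma)) = 0$. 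Now suppose for contradiction that $\cF^*$ is semidefinite. Then $\cV(\cF^*) \subseteq \ker(A_\obj + A(\bar\gamma))$, and since $A_\obj + A(\bar\gamma) \succeq 0$ has range orthogonal to its kernel, $b_\obj + b(\bar\gamma) = -(A_\obj + A(\bar\gamma))x^* \in \cV(\cF^*)^\perp$. This forces $\Pi_{\cV(\cF^*)}(b_\obj + b(\bar\gamma)) = 0$, contradicting \eqref{eq:sdp_tightness_main_condition}.

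The main obstacle is rigorously carrying out the KKT step: justifying constraint qualification from \cref{as:definiteness}, verifying that the aggregated dual multiplier lives inside $\cF^*$ (using that $\cF^*$ is a face, hence closed under conic combinations of active generators), and confirming that stationarity in $t$ pins the first coordinate of the aggregate to $1$. Once these pieces are assembled, $\Delta X = 0$ and $(x^*, t^*) \in \cD$; in particular, $\Opt \le q_\obj(x^*) = t^* = \Opt_\textup{SDP} \le \Opt$, yielding $\Opt = \Opt_\textup{SDP}$.
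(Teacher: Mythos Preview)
Your approach is genuinely different from the paper's and, modulo one issue, correct. The paper argues \emph{primally}: assuming $\cF^*$ is semidefinite, it uses the hypothesis to produce a strictly separating hyperplane between $0$ and the polyhedral set $\Pi_{\cV(\cF^*)}\{b_\obj+b(\gamma):(1,\gamma)\in\cF^*\}$, which yields a direction $(x',2t')$ with $x'\in\cV(\cF^*)$ along which $(x^*,t^*)$ can be perturbed inside $\cD_\textup{SDP}$ while strictly decreasing $t$; this contradicts optimality, so $\cF^*$ must be definite, and then a separately cited lemma gives $(x^*,t^*)\in\cD$. Your argument is \emph{dual}: you extract a KKT multiplier $(1,\bar\gamma)\in\cF^*$ with $(A_\obj+A(\bar\gamma))x^*+b_\obj+b(\bar\gamma)=0$, which (together with $\cV(\cF^*)\subseteq\ker(A_\obj+A(\bar\gamma))$) forces $\Pi_{\cV(\cF^*)}(b_\obj+b(\bar\gamma))=0$ and hence definiteness of $\cF^*$. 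A nice byproduct is that your Step~1 is a clean, self-contained proof of the ``definite face $\Rightarrow (x^*,t^*)\in\cD$'' fact that the paper only cites.

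The gap is exactly the one you flag, and your proposed fix is not quite right. \cref{as:definiteness} is \emph{dual} Slater for the SDP; it does not obviously supply a constraint qualification for the finite convex-quadratic description of $\cD_\textup{SDP}$. In particular, extreme rays $(\gamma_\obj^{(j)},\gamma^{(j)})$ with $\gamma_\obj^{(j)}=0$ give constraints in $x$ alone, and nothing in \cref{as:definiteness} guarantees a strictly feasible $x$ for those (think of a constraint like $x_1^2\le 0$). The natural alternative---obtain $(1,\bar\gamma)$ from SDP complementary slackness with a primal optimal $Z^*$---does yield precisely the stationarity identity you want, but it requires the SDP \emph{dual} to be attained, i.e., \emph{primal} SDP Slater, which again is not provided by \cref{as:definiteness}. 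So the existence of the KKT point $(1,\bar\gamma)$ needs an additional argument (or an extra mild assumption such as primal strict feasibility). The paper's perturbation argument sidesteps all of this: it never needs any multiplier to exist, only that the projected set is closed (polyhedrality) and that only finitely many aggregated constraints need to be checked after perturbation (again polyhedrality).
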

We remark that this theorem proves something stronger than objective value exactness. Specifically, \cref{thm:sdp_tightness_main} states that the \emph{only} optimizers of the projected SDP relaxation are the original optimizers.

We give a high-level proof sketch of this statement;
see \cite[Theorem 3]{wang2021tightness} for a complete proof.
We emphasize that the structure of this proof will serve as the foundation for many of the proofs in this tutorial and that we will routinely return to this discussion.

\begin{proof}[Proof sketch of \cref{thm:sdp_tightness_main}]
Let $(\hat x, \hat t)\in\cD_\textup{SDP}$ and let $\cF$ denote the face of $\Gamma$ maximizing the linear function
\begin{align*}
(\gamma_\obj,\gamma)\mapsto \gamma_\obj(\qobj(\hat x) - \hat t) + \ip{\gamma, q(\hat x)}.
\end{align*}
Equivalently, $\cF$ is the face of $\Gamma$ exposed by $(\qobj(\hat x) - \hat t, q(\hat x))$.

If $\cF = \Gamma$, then by the fact that $\Gamma$ is full-dimensional (\cref{as:definiteness}), it must be the case that $\qobj(\hat x) = \hat t$ and $q_i(\hat x) = 0$ for all $i\in[m]$.
More generally, it is possible to show that if $\cF$ is a definite face, then $(\hat x,\hat t)\in\cD$ (see \cite[Lemma 3]{wang2021tightness}).

To prove \cref{thm:sdp_tightness_main}, we will suppose $(\hat x,\hat t)\in\arg\min_{(x,t)\in\cD_\textup{SDP}}t$ corresponds to a semidefinite face of $\Gamma$ and construct a new point $(\hat x+\epsilon x', \hat t+2\epsilon t')\in\cD_\textup{SDP}$ such that $\hat t+\epsilon t'< \hat t$, contradicting the assumption that $(\hat x, \hat t)\in\arg\min_{(x,t)\in\cD_\textup{SDP}}t$.
Specifically, let $x'\in\cV(\cF)$ and $t'\in\R$ correspond to a hyperplane\footnote{If $\set{\gamma:\, (1,\gamma)\in\cF}=\emptyset$, then take $x'=0$ and $t' = -1$.} 
strictly separating the origin from the closed convex set $\Pi_{\cV(\cF)}\set{b_\obj + b(\gamma):(1,\gamma)\in\cF}$ (note that \cref{as:gamma_polyhedral} implies that this set is polyhedral), i.e.,
\begin{align}
\label{eq:str_separation}
\ip{b_\obj + b(\gamma), x'} \leq t' < 0
\end{align}
for all $\gamma\in\R^m$ such that $(1,\gamma)\in\Gamma$.
Then for all $(\gamma_\obj,\gamma)\in\cF$ and $\epsilon>0$, we have
\begin{align*}
\gamma_\obj \left(q(\hat x + \epsilon x') - (\hat t + 2\epsilon t')\right) + \ip{\gamma, q(\hat x + \epsilon x')} &\leq 0
\end{align*}
as $x'\in\cV(\cF)$ and $\ip{b_\obj + b(\gamma), x'} \leq t' < 0$ for all $(1,\gamma)\in\cF$.
On the other hand, for $(\gamma_\obj,\gamma)\in\Gamma\setminus\cF$,
\begin{align}
\label{eq:change_in_aggregated_quadratic}
\gamma_\obj \left(q(\hat x + \epsilon x') - (\hat t + 2\epsilon t')\right) + \ip{\gamma, q(\hat x + \epsilon x')}
\end{align}
is a quadratic function in $\epsilon$ which is negative at $\epsilon= 0$.

Then, using the fact that $\Gamma$ is polyhedral (so that it suffices to ensure that \eqref{eq:change_in_aggregated_quadratic} is nonpositive for only finitely many choices of $(\gamma_\obj,\gamma)$), we deduce that there exists $\epsilon>0$ small enough such that $(\hat x + \epsilon x',\hat t + 2\epsilon t')\in\cD_\textup{SDP}$. This contradicts the assumption that $(\hat x,\hat t)\in\argmin_{(x,t)\in\cD_\textup{SDP}}t$ and concludes the proof sketch.
\end{proof}

\begin{remark}
We highlight some of the key steps in the above proof with  additional intuition and motivation.
Recall that $\Gamma$ is assumed to be polyhedral so that $\cD_\textup{SDP}$ is defined by finitely many convex quadratic constraints. We will for the sake of simplicity assume that $\Gamma = \cone\left(\set{(1,\gamma^{(j)})}\right)$.
The above proof sketch starts with $(\hat x,\hat t)\in\cD_\textup{SDP}$ such that the face $\cF$ of $\Gamma$ exposed by $(q_\obj(\hat x) - \hat t, q(\hat x))$ is semidefinite. This set $\cF$ is the convex hull of the (aggregation weights corresponding to the) convex quadratic constraints that are tight at $(\hat x, \hat t)$.
The remaining finitely many convex quadratic constraints are strictly satisfied at $(\hat x,\hat t)$ so that they remain satisfied under small enough perturbations of $(\hat x,\hat t)$.
In particular, given a perturbation direction $(x',2t')$ we have $(x+\epsilon x', t+2\epsilon t')\in\cD_\textup{SDP}$ for all $\epsilon>0$ small enough if and only if each of the tight constraints continues to hold for all $\epsilon>0$ small enough, i.e.,
\begin{align*}
q(\hat x+\epsilon x') - (\hat t + 2\epsilon t') + \ip{\gamma^{(j)},q(\hat x + \epsilon x')} \leq 0
\end{align*}
for every tight constraint $(1,\gamma^{(j)})\in\cF$ and $\epsilon>0$ small enough.
For a given $x'$, it is possible to show that there exists $t'<0$ satisfying this requirement if and only if each of the tight quadratic constraints is strictly decreasing at $\hat x$ in the direction $x'$.
That is to say, we would like to find a direction $x'$, such that each of the convex quadratic functions $q_\obj(x) + \ip{\gamma^{(j)}, q(x)}$ for $(1,\gamma^{(j)})\in\cF$ has a negative first derivative at $\hat x$ in the direction $x'$.
Expanding, we would like to find $x'$ such that $\ip{(A_\obj + A(\gamma^{(j)}))\hat x + (b_\obj + b(\gamma^{(j)})), x'} <0$ for all $(1,\gamma^{(j)})\in\cF$.
Finally, restricting our search of $x'$ to $\cV(\cF)$, i.e., directions in which all of the tight quadratic constraints are only linear, we then have that such an $(x',2t')$ exists if and only if there exists $x'\in\cV(\cF)$ such that
\begin{align*}
\ip{b_\obj + b(\gamma^{(j)}), x'} &<0,\,\forall (1,\gamma^{(j)})\in\cF.\qedhere
\end{align*}
\end{remark}

A similar proof strategy allows us to relax the conditions of \cref{thm:sdp_tightness_main} if only objective value exactness is required.
\begin{theorem}
\label{thm:sdp_tightness_weak}
Suppose \cref{as:definiteness,as:gamma_polyhedral} hold. If for every semidefinite face $\cF$ of $\Gamma$ we have
\begin{align}
\label{eq:sdp_tightness_weak}
\cone\left(\Pi_{\cV(\cF)}\set{b_\obj + b(\gamma):\, (1,\gamma)\in\cF} \right) \neq \cV(\cF), 
\end{align}
then $\Opt=\Opt_\textup{SDP}$.
\end{theorem}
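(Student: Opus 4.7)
The plan is to adapt the proof sketch of \cref{thm:sdp_tightness_main}. The key conceptual difference is that the weakened hypothesis \eqref{eq:sdp_tightness_weak} yields only a non-strict separating direction, so we can no longer strictly decrease $t$. Instead, I would use the resulting direction to move along a slice of $\cD_\textup{SDP}$ at constant $t$, iterating until we reach a point whose exposed face of $\Gamma$ is definite, which must lie in $\cD$.

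I would proceed by contradiction: assume $\Opt > \Opt_\textup{SDP}$. Under \cref{as:definiteness}, $\cD_\textup{SDP}$ is closed (by the remark following \cref{lem:cS_SDP_description}), so the infimum is attained at some $(\hat x,\hat t)\in\cD_\textup{SDP}$ with $\hat t = \Opt_\textup{SDP}$. Let $\cF$ be the face of $\Gamma$ exposed by $(q_\obj(\hat x)-\hat t,\ q(\hat x))$. If $\cF$ is definite, then \cite[Lemma 3]{wang2021tightness} gives $(\hat x,\hat t)\in\cD$, contradicting $\hat t < \Opt$; hence $\cF$ is semidefinite. Moreover, $t$-optimality of $\hat t$ forces $\cF$ to contain some $(\gamma_\obj,\gamma)$ with $\gamma_\obj>0$ (otherwise no tight constraint involves $t$ and $\hat t$ could be decreased). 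Under \cref{as:gamma_polyhedral} the cone $\cone(\Pi_{\cV(\cF)}\{b_\obj+b(\gamma):(1,\gamma)\in\cF\})$ is polyhedral, and combined with \eqref{eq:sdp_tightness_weak}, separation in $\cV(\cF)$ yields a nonzero $x'\in\cV(\cF)$ with $\ip{x',\,b_\obj+b(\gamma)}\leq 0$ for every $(1,\gamma)\in\cF$. Using the conic structure of $\cF$ (together with the fact that it contains an element with $\gamma_\obj>0$), this extends to $\ip{x',\,\gamma_\obj b_\obj+b(\gamma)}\leq 0$ for every $(\gamma_\obj,\gamma)\in\cF$.

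Next, I would run the perturbation argument in direction $(x',0)$. For $(\gamma_\obj,\gamma)\in\cF$, the aggregated constraint at $(\hat x+\epsilon x',\hat t)$ collapses to $2\epsilon\,\ip{\gamma_\obj b_\obj+b(\gamma),\,x'}\leq 0$, since $x'\in\cV(\cF)$ together with $\gamma_\obj A_\obj+A(\gamma)\succeq 0$ forces $(\gamma_\obj A_\obj+A(\gamma))x'=0$. For $(\gamma_\obj,\gamma)\in\Gamma\setminus\cF$ the constraint is strict at $\epsilon=0$ and remains so for small $\epsilon>0$. Hence $(\hat x+\epsilon x',\hat t)\in\cD_\textup{SDP}$ on a maximal interval $[0,\epsilon^*]$ with $\epsilon^*>0$. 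The case $\epsilon^*=\infty$ is excluded by \cref{as:definiteness}: it would force the $\epsilon^2$-coefficient $x'^\top(\gamma_\obj A_\obj+A(\gamma))x'$ to vanish for every $(\gamma_\obj,\gamma)\in\Gamma$, i.e., $x'\in\cV(\Gamma)$; but strict positive definiteness at some point of $\Gamma$ forces $\cV(\Gamma)=\{0\}$, contradicting $x'\neq 0$. Therefore $\epsilon^*<\infty$, and at $\epsilon=\epsilon^*$ some constraint outside $\cF$ becomes tight, producing a new SDP-feasible point at height $\hat t$ whose exposed face $\cF'$ differs from $\cF$.

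Iterating this construction generates a sequence of SDP-feasible points at height $\hat t$ with pairwise distinct exposed faces; since \cref{as:gamma_polyhedral} makes the face lattice of $\Gamma$ finite, the process must terminate at a point exposing a definite face, hence in $\cD$ by \cite[Lemma 3]{wang2021tightness} --- the desired contradiction. The main technical obstacle is precisely establishing termination in a clean way. The faces $\cF,\cF',\cF'',\ldots$ need not be monotonically ordered under inclusion (the direction $x'$ can cause some previously tight constraints to become strict), so a naive face-growth argument does not suffice. Ruling out cycling will require identifying a genuinely monotone invariant --- for instance, a lexicographic quantity combining $\dim(\cV(\cF))$ with a measure of how many constraints in $\cF$ remain active along the perturbation --- that strictly decreases at each step.
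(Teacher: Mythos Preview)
Your approach is essentially the paper's own: from a point of $\cD_\textup{SDP}$ whose exposed face $\cF$ is semidefinite, use the hypothesis to extract a direction $(x',t')$ with $t'\le 0$ along which one can move while remaining in $\cD_\textup{SDP}$, and iterate until the exposed face becomes definite (hence the point lies in $\cD$). The paper's sketch likewise defers the termination step with only the remark that ``by picking $\alpha$ appropriately \dots\ we eventually end up with a point in $\cD$,'' so your explicit flagging of termination as the outstanding obstacle --- and your honest observation that consecutive-face distinctness does not by itself rule out cycling --- matches exactly the level of completeness the paper provides here.
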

\begin{proof}[Proof sketch of \cref{thm:sdp_tightness_weak}]
This proof follows a similar structure to the proof of \cref{thm:sdp_tightness_main}. Again, given $(\hat x,\hat t)\in\cD_\textup{SDP}$, we will look at $\cF$, the face of $\Gamma$ exposed by $(q_\obj(\hat x) - \hat t, q(x))$. If $\cF$ is definite, then we may conclude that in fact $(\hat x,\hat t)\in\cD$. On the other hand, if $\cF$ is semidefinite, we will use \eqref{eq:sdp_tightness_weak} to find a direction $(x', t')$ and $\alpha \in\R$ such that $(\hat x+\alpha x', \hat t + \alpha t')\in\cD_\textup{SDP}$ with $\hat t+\alpha t'\leq \hat t$. To complete this proof, one then needs to show that by picking $\alpha$ appropriately and iterating this procedure, we eventually end up with a point in $\cD$.
\end{proof}
\begin{remark}\label{rem:sdp_tightness_weak_rewrite}
Note that the condition in \cref{thm:sdp_tightness_weak} can be rewritten as the existence, for each semidefinite face $\cF$ of $\Gamma$, of a nonzero $x'\in\cV(\cF)$ such that
\begin{align}
\label{eq:sdp_tightness_weak_rewrite}
\ip{b_\obj + b(\gamma),x'} &\leq 0 ,\,\forall (1,\gamma)\in\cF.
\end{align}
This equivalence, i.e., \eqref{eq:sdp_tightness_weak} $\iff$ \eqref{eq:sdp_tightness_weak_rewrite}, follows from the general result for an arbitrary convex subset $C$ of a Euclidean vector space $\E$, that $\cone(C) = \E$ if and only if $\clcone(C) = \E$ if and only if $C^\circ = \set{0}$.

In this form, it is also easy to see that the sufficient condition in \cref{thm:sdp_tightness_main} implies the sufficient condition in \cref{thm:sdp_tightness_weak}, i.e., \eqref{eq:sdp_tightness_main_condition} $\Longrightarrow$ \eqref{eq:sdp_tightness_weak_rewrite}.
Specifically if $\cF$ is a semidefinite face of $\Gamma$ and
$0\notin \Pi_{\cV(\cF)}\set{b_\obj + b(\gamma):\, (1,\gamma)\in\cF}$, then (by the hyperplane separation theorem applied to the convex sets $\Pi_{\cV(\cF)}\set{b_\obj + b(\gamma):\, (1,\gamma)\in\cF}$ and $\set{0}$ in the Euclidean space $\cV(\cF)$) there exists a nonzero $x'\in\cV(\cF)$ such that $\ip{b_\obj + b(\gamma), x'}\leq 0$ for all $(1,\gamma)\in\cF$.
\end{remark}

\begin{remark}\label{rem:comparisons_with_literature}
\citet{burer2019exact} consider the standard SDP relaxation of diagonal QCQPs and present sufficient conditions on the input data that guarantee objective value exactness. Specifically, they show that if for all $j\in[n]$, it holds that
\begin{align}
\label{eq:burer_ye_sufficient}
(1,\gamma)\in\Gamma,\, (A_\obj + A(\gamma))_{j,j} = 0 \implies
(b_\obj + b(\gamma))_j\neq 0,
\end{align}
then objective value exactness holds. It is not hard to see that \eqref{eq:burer_ye_sufficient} implies the assumptions of \cref{thm:sdp_tightness_main} so that \cref{thm:sdp_tightness_main} recovers \cite[Theorem 1]{burer2019exact} as a special case (see \cite[Proposition 5]{wang2021tightness}).

\citet{locatelli2016exactness} considers the standard SDP relaxation of the TRS with additional linear constraints
\begin{align*}
\inf_{x\in\R^n}\set{q_\obj(x):\, \begin{array}
	{l}
	2b_i^\top x + c_i\leq 0,\,\forall i\in[m_I-1]\\
	x^\top x - 1\leq 0
\end{array}}.
\end{align*}
For this setup, \citet{locatelli2016exactness} shows that the SDP relaxation for this problem is exact if for all $\epsilon>0$, there exists $\norm{h_\epsilon}\leq \epsilon$ such that
\begin{align}
\label{eq:locatelli_sufficient}
0\notin \Pi_\cV\set{b_\obj + h_\epsilon + b(\gamma):\, \gamma\in\R^{m}_+},
\end{align}
where $\cV$ is the subspace of $\R^n$ corresponding the minimum eigenvalue (assumed to be negative) of $A_\obj$. \cite[Proposition 4]{wang2021tightness} establishes that \eqref{eq:locatelli_sufficient} implies the assumptions of \cref{thm:sdp_tightness_weak} so that \cref{thm:sdp_tightness_weak} recovers \cite[Theorem 3.1]{locatelli2016exactness} as a special case.
\end{remark}

We next discuss a few applications of these results.
\begin{example}[Convex QCQPs]
As a first example, let us consider the setting where $\cD$ corresponds to a QCQP with a strongly convex objective and convex constraints. Specifically, suppose $A_\obj \succ0$, $A_i\succeq 0$ for all $i\in [m_I]$ and $A_i = 0$ for all $i\in[m_I+1,m]$. Then,
\begin{align*}
\Gamma \coloneqq \set{(\gamma_\obj,\gamma)\in\R\times\R^m:\, \begin{array}
	{l}
	A_\obj + A(\gamma)\succeq 0\\
	\gamma_\obj\geq 0\\
	\gamma_i \geq 0,\,\forall i\in[m_I]
\end{array}} = \R_+\times \R_+^{m_I}\times \R^{m_E}
\end{align*}
is polyhedral. Furthermore, as $A_\obj \succ0$, no semidefinite face $\cF$ of $\Gamma$ can contain a point of the form $(1,\gamma)$.
Then, we deduce by \cref{thm:sdp_tightness_main}, that $\Opt=\Opt_\textup{SDP}$ for convex QCQPs. Here, the requirement that $A_\obj\succ 0$ can be removed by a standard perturbation argument; see \cite[Proposition 4]{burer2019exact}.
\end{example}
\begin{example}[Diagonal QCQPs with sign-definite linear terms]
The following condition is presented in \cite[Corollary 1]{sojoudi2014exactness} (see also \cite{burer2019exact}). Consider the following setup:
Suppose $\cD$ is the epigraph of a diagonal QCQP, i.e., a QCQP where each $A_\obj$ and $A_1,\dots,A_m$ are all diagonal, with only inequality constraints, i.e., $m=m_I$. Furthermore, suppose \cref{as:definiteness} holds and that for all $j\in[n]$, the set of coefficients
\begin{align*}
\set{(b_\obj)_j}\cup \set{(b_i)_j:\, i\in[m]}
\end{align*}
are either all nonnegative or nonpositive.
For all  $j\in[n]$, let $\sigma_j \in \set{\pm 1}$ be such that $(b_\obj)_j\sigma_j \leq 0$ and $(b_i)_j\sigma_j \leq 0$ for all $i\in[m]$.
We can apply \cref{thm:sdp_tightness_weak} in this setting.
Indeed, suppose $\cF$ is a semidefinite face of $\Gamma$.
As $A_\obj$ and $A_1,\dots,A_m$ are diagonal, there exists a nonempty subset of indices $\cJ\subseteq[n]$ such that
\begin{align*}
\cF = \set{(\gamma_\obj,\gamma)\in\Gamma:\, \diag\left(\gamma_\obj A_\obj + A(\gamma)\right)_j = 0 ,\,\forall j\in\cJ}.
\end{align*}
Without loss of generality, we may assume $\cJ = [\ell]$ with $\ell \geq 1$ so that $\sigma_1e_1\in\cV(\cF)$.
Then, for all $(1,\gamma)\in\cF$,
\begin{align*}
\ip{b_\obj + b(\gamma), \sigma_1 e_1} = 
(b_\obj + b(\gamma))_1\sigma_1 = (b_\obj)_1\sigma_1 + \sum_{i=1}^m (b_i)_1 \gamma_i  \sigma_1 \leq 0, 
\end{align*}
where the last inequality follows from $\gamma\in\R^m_+$ (as $(1,\gamma)\in\cF$) and that $(b_\obj)_j\sigma_j \leq 0$ and $(b_i)_j\sigma_j \leq 0$ for all $i\in[m]$.  
Then, we deduce by \cref{thm:sdp_tightness_weak} and \cref{rem:sdp_tightness_weak_rewrite} that $\Opt=\Opt_\textup{SDP}$ for diagonal QCQPs with sign-definite linear terms.
\end{example}
\begin{example}
[QCQPs with centered constraints and polyhedral $\Gamma$]
\label{ex:centered_constraints}
Suppose $b_i = 0$ for all $i\in[m]$ and that \cref{as:definiteness,as:gamma_polyhedral} hold. Then, for any semidefinite face $\cF$ of $\Gamma$, we have
\begin{align*}
\cone\left(\Pi_{\cV(\cF)}\set{b_\obj + b(\gamma):\, (1,\gamma)\in\cF}\right) \subseteq \cone\left(\Pi_{\cV(\cF)}b_\obj\right)
\end{align*}
is a cone generated by a single point and cannot be $\cV(\cF)$ (recall that $\cV(\cF)$ has dimension at least one as $\cF$ is semidefinite). Then, we deduce by \cref{thm:sdp_tightness_weak} that $\Opt=\Opt_\textup{SDP}$ for QCQPs with centered constraints and polyhedral $\Gamma$.
Further specializing this example, we may consider the QCQP
\begin{align*}
\min_{x\in\R^n}\set{-x^\top x:\, \begin{array}
	{l}
	x^\top x \leq 1\\
	x^\top A_i x = 0 ,\,\forall i\in[1,m]
\end{array}},
\end{align*}
which takes the value $-1$ when the quadratic forms $\set{A_i}$ have a nontrivial joint zero and $0$ otherwise. \citet{barvinok1993feasibility} shows that a specialized algebraic algorithm
(not based on the SDP relaxation) can be used to decide the value of this program in polynomial time for any constant $m$. On the other hand, \cref{thm:sdp_tightness_weak} shows that we can decide the value of this program by solving a semidefinite program (for any $m$) whenever \cref{as:gamma_polyhedral} holds. See also \cite[Remark 9]{wang2021tightness}.
\end{example}
\begin{remark}
The conditions presented in \cref{thm:sdp_tightness_main,thm:sdp_tightness_weak} may be strengthened by taking into account the values of $c_i$. For example, it suffices to impose the constraints of \cref{thm:sdp_tightness_main,thm:sdp_tightness_weak} on only the semidefinite faces exposed by vectors of the form $(q_\obj(x) - t, q(x))$ for some $x\in\R^n$ and $t\in\R$. Ideas related to this observation have been studied in further detail in \cite{locatelli2020kkt,wang2020geometric}. See also \cref{sec:convex_hull_exactness}.
\end{remark}

 \section{Convex hull exactness and polyhedral $\Gamma$}
\label{sec:ch_exactness_polyhedral}

In this section, we will continue to assume that $\Gamma$ is polyhedral (\cref{as:gamma_polyhedral}) and present sufficient conditions under this assumption for \emph{convex hull exactness}, $\conv(\cD) = \cD_\textup{SDP}$. As in the previous section, the sufficient conditions stem from understanding how $A(\gamma)$ and $b(\gamma)$ interact on faces of $\Gamma$.
Again, we will assume throughout this section that $\cM_I$ and $\cM_E$ are finite.

The following theorem provides a sufficient condition for convex hull exactness. 
\begin{theorem}[{\cite[Theorem 1]{wang2021tightness}}]
\label{thm:convex_hull_main}
Suppose \cref{as:definiteness,as:gamma_polyhedral} hold. If for every semidefinite face $\cF$ of $\Gamma$ we have
\begin{align}
\label{eq:convex_hull_main}
\aff\left(\Pi_{\cV(\cF)}\set{b_\obj + b(\gamma):\, (1,\gamma)\in\cF}\right)\neq \cV(\cF),
\end{align}
then $\conv(\cD) = \cD_\textup{SDP}$.
\end{theorem}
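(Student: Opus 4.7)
The plan is to establish the nontrivial containment $\cD_\textup{SDP} \subseteq \conv(\cD)$ by a two-sided perturbation argument combined with induction on the face lattice of $\Gamma$. Given $(\hat x, \hat t) \in \cD_\textup{SDP}$, let $\cF$ denote the face of $\Gamma$ exposed by $(q_\obj(\hat x) - \hat t, q(\hat x))$. When $\cF$ is definite, $(\hat x, \hat t) \in \cD$ directly, by the definite-face argument underlying the proof sketch of \cref{thm:sdp_tightness_main} (see \cite[Lemma 3]{wang2021tightness}). Since $\Gamma$ itself is a definite face by \cref{as:definiteness}, induction along the face lattice is well-founded with the definite case serving as the base.

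For the inductive step, suppose $\cF$ is semidefinite. An affine separating-hyperplane argument inside the Euclidean space $\cV(\cF)$, applied to \eqref{eq:convex_hull_main}, supplies a nonzero $x' \in \cV(\cF)$ and a scalar $t' \in \R$ such that $\langle b_\obj + b(\gamma), x'\rangle = t'$ for every $(1,\gamma) \in \cF$; using that $\cF$ is a convex cone, this upgrades to $(\gamma_\obj b_\obj + b(\gamma))^\top x' = \gamma_\obj t'$ for every $(\gamma_\obj, \gamma) \in \cF$. Expanding each aggregated quadratic at $(\hat x + \epsilon x', \hat t + 2\epsilon t')$ as in the proof of \cref{thm:sdp_tightness_main}, the inclusion $x' \in \cV(\cF)$ annihilates the $\epsilon^2$-term and the upgraded linear equality annihilates the $\epsilon$-term. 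Thus every constraint indexed by $\cF$ holds with equality for all $\epsilon \in \R$, while constraints indexed by $\Gamma \setminus \cF$ are strictly negative at $\epsilon = 0$; by \cref{as:gamma_polyhedral}, only finitely many extreme-ray constraints need to be tracked. Consequently, $\set{\epsilon \in \R :\, (\hat x + \epsilon x', \hat t + 2\epsilon t') \in \cD_\textup{SDP}}$ is a closed interval $[\epsilon_-, \epsilon_+]$ with $0$ in its interior. When both $\epsilon_\pm$ are finite, the endpoints $p_\pm \in \cD_\textup{SDP}$ expose faces $\cF_\pm \supsetneq \cF$, and writing $(\hat x, \hat t)$ as a convex combination of $p_+$ and $p_-$ reduces the claim to the inductive hypothesis.

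The principal obstacle is the unbounded case. The bi-infinite possibility ($\epsilon_- = -\infty$ and $\epsilon_+ = +\infty$) is precluded by \cref{as:definiteness}: along an entire line, positive semidefiniteness of each Hessian $\gamma_\obj A_\obj + A(\gamma)$ for $(\gamma_\obj,\gamma) \in \Gamma$ forces $x' \in \cV(\Gamma) = \set{0}$, contradicting $x' \neq 0$. In the semi-infinite case one argues via recession: the unbounded direction is a recession direction of $\cD_\textup{SDP}$ and, in fact, of $\conv(\cD)$, which lets us express $(\hat x, \hat t)$ as the finite boundary point (in $\conv(\cD)$ by induction) translated along this recession ray. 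A minor edge case where $\set{(1,\gamma) \in \cF}$ is empty (so \eqref{eq:convex_hull_main} is vacuous) is handled by choosing $x' \in \cV(\cF)$ orthogonal to $\set{b(\gamma) :\, (0,\gamma) \in \cF}$, whose existence follows from a dimension count.
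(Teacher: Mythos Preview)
Your approach---two-sided perturbation along a direction keeping the $\cF$-indexed constraints tight, pushing to both endpoints, and inducting up the face lattice of $\Gamma$---is exactly the paper's strategy. Two points warrant tightening.

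First, the semi-infinite case with $x'\neq 0$ is actually vacuous under \cref{as:definiteness}: if $\epsilon_+=+\infty$, then each convex quadratic $\epsilon\mapsto \gamma_\obj(q_\obj(\hat x+\epsilon x')-(\hat t+2\epsilon t'))+\ip{\gamma,q(\hat x+\epsilon x')}$ is nonpositive on a half-line, forcing its leading coefficient $(x')^\top(\gamma_\obj A_\obj+A(\gamma))x'$ to vanish for every extreme ray of $\Gamma$, whence $x'\in\cV(\Gamma)=\{0\}$. Your bi-infinite argument already supplies this; no separate recession step is needed---which is fortunate, since the bare assertion that a recession direction of $\cD_\textup{SDP}$ is automatically one of $\conv(\cD)$ is not justified by the inclusion $\conv(\cD)\subseteq\cD_\textup{SDP}$ alone.

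Second, in the edge case $\cF\subseteq\{\gamma_\obj=0\}$, the ``dimension count'' producing a nonzero $x'\in\cV(\cF)$ orthogonal to $\{b(\gamma):(0,\gamma)\in\cF\}$ can fail (take $n=1$, $\cF=\R_+(0,1)$, $A_1=0$, $b_1\neq 0$). The correct move, parallel to the footnote in the proof sketch of \cref{thm:sdp_tightness_main}, is to take $x'=0$ and $t'=1$; here the interval of feasible $\epsilon$ is genuinely semi-infinite, but now the unbounded direction $(0,2)$ is manifestly a recession direction of the epigraph $\cD$ itself, so the recession argument is legitimate.
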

\begin{proof}[Proof sketch of \cref{thm:convex_hull_main}]
The proof of this statement follows a similar structure to the proofs of \cref{thm:sdp_tightness_main,thm:sdp_tightness_weak}:
Recalling that $\conv(\cD)\subseteq\cD_\textup{SDP}$, it suffices to show that we may write any $(\hat x,\hat t)\in\cD_\textup{SDP}$ as a convex combination of points in $\cD$.
Given $(\hat x,\hat t)\in\cD_\textup{SDP}$, we will examine $\cF$, the face of $\Gamma$ exposed by $(q_\obj(\hat x) - t, q(\hat x))$. If $\cF$ is definite, then we may conclude that in fact $(\hat x,\hat t)\in\cD$. On the other hand, if $\cF$ is semidefinite, we will use \eqref{eq:convex_hull_main} to find a direction $(x', t')$ and $\alpha_1<0<\alpha_2$ such that $(\hat x+\alpha x', \hat t + \alpha t')\in\cD_\textup{SDP}$ for both $\alpha_1$ and $\alpha_2$. To complete this proof, one then needs to show that by picking $\alpha_1,\alpha_2$ appropriately and iterating this procedure for both choices of $\alpha$, we eventually end up with a convex decomposition of $(\hat x,\hat t)$ as points in $\cD$. See \cite[Section 4]{wang2021tightness} for details.
\end{proof}

\begin{remark}
\label{rem:convex_hull_main_rewrite}
We compare the assumptions of \cref{thm:sdp_tightness_weak,thm:convex_hull_main}.
Note that we may rewrite the condition in \cref{thm:convex_hull_main} as the existence, for each semidefinite face $\cF$ of $\Gamma$, of a nonzero $v\in \cV(\cF)$ and $r\in\R$ such that
\begin{align}
\label{eq:convex_hull_main_rewrite}
\ip{b_\obj + b(\gamma), v} = r,\,\forall (1,\gamma)\in\cF.
\end{align}
This equivalence, i.e., \eqref{eq:convex_hull_main} $\iff$ \eqref{eq:convex_hull_main_rewrite}, follows from the observation that an affine subspace is not the entirety of a vector space if and only if it is contained in an affine hyperplane.

In this form, it is clear that the assumptions of \cref{thm:convex_hull_main} imply the assumptions of \cref{thm:sdp_tightness_weak} (see \cref{rem:sdp_tightness_weak_rewrite}). We remark that replacing the inequality in \eqref{eq:sdp_tightness_weak_rewrite} by an equality in \eqref{eq:convex_hull_main_rewrite} is natural when moving from objective value exactness to convex hull exactness. Specifically, in contrast to the proof of \cref{thm:sdp_tightness_weak}, which needs only move in a single direction (i.e., in a direction in which the objective value is nonincreasing), the proof of \cref{thm:convex_hull_main} must move in both directions (i.e., for both $\alpha_1<0$ and $0<\alpha_2$).
\end{remark}

\begin{remark}\label{rem:soc_convex_hull}
Recall that when $\Gamma$ is polyhedral we have that $\cD_\textup{SDP}$ is SOC-representable (see \cref{rem:polyhedral_socp}). In particular, under the assumptions of \cref{thm:convex_hull_main}, we have that $\conv(\cD)$ is also SOC-representable.

A number of results in the literature give second-order cone representations of convex hulls of quadratically constrained sets with a small number of constraints. For example, 
\citet{hoNguyen2017second} show that the epigraph of the TRS is given by the intersection of two convex quadratic regions. \citet{wang2020generalized} extend these results to the GTRS.
\citet{yildiran2009convex} shows that the convex hull of the intersection of two strict quadratic inequalities is given by (open) second-order cone constraints. Follow-up work by \citet{modaresi2017convex} show that it is possible to take the closure under an additional technical assumption.
\citet{burer2017how} examine the convex hull of the intersection of the second-order cone with a nonconvex quadratic constraint. The closed convex hull is shown to be SOC-representable under certain conditions.
More recently, \citet{santana2020convex} showed that the convex hull of the intersection of a polytope and a single nonconvex quadratic constraints is SOC-representable.
\end{remark}

We close this section with example applications of our results.
\begin{example}
[GTRS]
\label{ex:gtrs}
The problem of minimizing a (possibly nonconvex) quadratic function subject to a (possibly nonconvex) quadratic inequality\footnote{Similar statements can also be derived for the GTRS with an equality constraint.} constraint is known as the Generalized Trust Region Subproblem (GTRS). The special case where the quadratic constraint is convex is known as the Trust Region Subproblem (TRS) and is fundamental in the area of nonlinear programming. 
Specifically, trust-region methods (iterative methods for solving nonlinear optimization problems that solve a TRS instance at each iteration)
see strong theoretical guarantees as well as empirical performance \cite{conn2000trust}.

Supposing that \cref{as:definiteness} holds, we may apply \cref{thm:convex_hull_main} to deduce that $\conv(\cD) = \cD_\textup{SDP}$ for the GTRS.
Specifically, in this setting we can write $\Gamma$ as
\begin{align*}
\Gamma = \set{(\gamma_\obj,\gamma_1)\in\R\times\R:\, \begin{array}
	{l}
	\gamma_\obj A_\obj + \gamma_1 A_1 \succeq 0\\
	\gamma_\obj \geq 0\\
	\gamma_1 \geq 0
\end{array}}.
\end{align*}
As $\Gamma$ is a conic subset of $\R^2$, it is immediately polyhedral.
It is not hard to verify that if $\cF$ is a semidefinite face of $\Gamma$, then $\set{\gamma\in\R:\, (1,\gamma)\in\cF}$ is either empty or a single point. We deduce by \cref{thm:convex_hull_main} and \cref{rem:convex_hull_main_rewrite} that $\conv(\cD)=\cD_\textup{SDP}$ for the GTRS. 
This then recovers \cite[Theorem 1]{wang2020generalized} as a special case.
\end{example}

\begin{example}
[Swiss cheese]
\label{ex:swiss_cheese}
Suppose $A_\obj,A_i \in\set{I,0, -I}$ for all $i\in[m]$.
This setting captures, for example, the problem of finding the minimum norm point on some domain defined by ``inside ball'' constraints, ``outside ball'' constraints, and halfspaces. In this setting, for any semidefinite face $\cF$ of $\Gamma$, we have $\cV(\cF) = \R^n$ so that
\begin{align*}
&\aff\dim\left(\Pi_{\cV(\cF)}\set{b_{\obj}+b(\gamma):\, (1,\gamma)\in\cF}\right)
=\aff\dim\set{b_{\obj}+b(\gamma):\, (1,\gamma)\in\cF}\\
&\qquad\leq \aff\dim\set{\gamma:\, (1,\gamma)\in\cF} \leq m - 1.
\end{align*}
Here, the last inequality follows as $\cF$ is a semidefinite face of $\Gamma$ (note that if $\aff\dim\set{\gamma:\, (1,\gamma)\in\cF} = m$, then $\cF$ has affine dimension $m + 1$, which contradicts that $\cF$ is a semidefinite face of $\Gamma$).

We deduce by \cref{thm:convex_hull_main} that if $m_I + m_E = m \leq n$, then $\conv(\cD) = \cD_\textup{SDP}$.

Similar setups have been considered in the literature. For example, \citet{bienstock2014polynomial} devise an enumerative algorithm for minimizing an \emph{arbitrary} quadratic function over a feasible domain defined by \emph{a constant number} of ``inside ball,'' ``outside ball,'' and halfspace constraints. In contrast, our results (via \cref{thm:convex_hull_main}) deal only with objective functions of a particular form, but work with the standard SDP relaxation and do not make any assumption on the number of constraints. See also \cite[Section 2.3]{burer2019exact} for SDP objective value exactness results for an arbitrary quadratic function and an arbitrary number of constraints under a sign-definiteness assumption.

\citet{yang2018quadratic} consider QCQPs with additional ``hollow'' constraints. Formally, they show that if a QCQP with bounded domain $\cX$ satisfies objective value exactness, then so too does the QCQP with domain $\cX \setminus \bigcup_\alpha \inter(E_\alpha)$ where $\set{E_\alpha}$ is a finite set of \emph{non-intersecting} ellipsoids completely contained within $\cX$. Taking $\cX$ to be the unit ball, this result then says that semidefinite programs can correctly minimize an arbitrary quadratic function over the sphere missing a finite number of nonintersecting ellipsoids inside the ball. In contrast, our results (via \cref{thm:convex_hull_main}) deal only with spherical objective functions and constraints (as opposed to general objective functions and ellipsoidal constraints), but do not make any assumption on how the constraints intersect. See also \cite[Remark 10]{wang2021tightness}. The follow-up recent work \cite{joyce2021convex}  
shows that a similar result holds even if $\cX$ is unbounded and the ``hollow'' constraints are not necessarily ellipsoidal.
\end{example}

\begin{example}
[QCQPs with large amounts of symmetry]
\label{ex:qmp}
The following setup is considered in \cite[Section 3]{wang2021tightness}.
Consider a general QCQP of the form \eqref{eq:opt} and let $1\leq k\leq n$ denote the largest positive integer such that each of the quadratic forms $A_\obj,A_1,\dots,A_m$ in the QCQP can be written in the form
\begin{align*}
A_\obj = I_k\otimes \A_\obj,\quad A_i = I_k \otimes \A_i 
\end{align*}
for some $\A_\obj,\A_i\in\S^{n/k}$. This quantity is referred to as the \textit{quadratic eigenvalue multiplicity} of the underlying QCQP and can be thought of as a measure of the ``amount of symmetry'' in the QCQP.
Such structure arises naturally when considering the vectorized reformulation of quadratic matrix programs (QMPs) \cite{beck2007quadratic,beck2012new}. Specifically, a QMP is an optimization problem of the form,
\begin{align*}
\inf_{Y\in\R^{s\times k}}\set{\tr(Y^\top \A_\obj Y) + 2\tr(B_\obj^\top Y) + c_\obj:\, \begin{array}
	{l}
	\tr(Y^\top \A_i Y) + 2\tr(B_i^\top Y) + c_i \leq 0,\,\forall i\in [m_I]\\
	\tr(Y^\top \A_i Y) + 2\tr(B_i^\top Y) + c_i = 0,\,\forall i\in [m_I+1,m]
\end{array}},
\end{align*}
where $\A_i\in\S^{s}$, $B_i\in\R^{s\times k}$, and $c_i\in\R$.
Then,
letting $x\in\R^{sk}$ (resp.\ $b\in\R^{sk}$) denote the vector obtained by stacking the columns of $Y\in\R^{s\times k}$ (resp.\ $B\in\R^{s\times k}$) on top of each other, we have
\begin{align*}
\tr(Y^\top \A Y) + 2\tr(B^\top Y) + c = x^\top (I_k\otimes A) x + 2b^\top x + c.
\end{align*}

The quadratic eigenvalue multiplicity can be viewed as an example of a group symmetry in $\set{A_\obj, A_1,\dots,A_m}$.
Group symmetries have been studied in more generality with the goal of reducing the size of large SDPs \cite{gatermann2004symmetry,deKlerk2007reduction} and have enabled the efficient solution of numerous large-scale problems; see for example \cite{deKlerk2010exploiting}.
Specifically, the \emph{Wedderburn decomposition} of the matrix $\C^*$-algebra generated by $\set{A_\obj, A_1,\dots,A_m}$ plays a prominent role in the analysis of such symmetries (see \cite{deKlerk2011numerical,gijswijt2010matrix} for background on the Wedderburn decomposition and related numerical questions).
From this point of view, one may compare the quadratic eigenvalue multiplicity as defined above with the ``block multiplicity'' of a basic algebra in the Wedderburn decomposition of the corresponding $\C^*$ algebra.
See also \cite[Remark 5]{wang2021tightness} and references therein.

It is not hard to show for any semidefinite face $\cF$ of $\Gamma$, that $\dim(\cV(\cF))\geq k$. Indeed, there exists a nonzero $y\in\R^{n/k}$ such that for all $(\gamma_\obj,\gamma)\in\cF$, we have
$\left(\gamma_\obj \A_\obj + \A(\gamma)\right)y = 0$.
We thus deduce for all $(\gamma_\obj,\gamma)\in\cF$ and $w\in\R^k$ that 
\begin{align*}
\left(\gamma_\obj A_\obj + A(\gamma)\right)(w\otimes y) = 0,
\end{align*}
whence $\dim(\cV(\cF))\geq k$. See also \cite[Lemma 6]{wang2021tightness}.

On the other hand, for any semidefinite face $\cF$ of $\Gamma$, we may upper bound
\begin{align*}
\aff\dim\left(\Pi_{\cV(\cF)}\set{b_\obj + b(\gamma):\, (1,\gamma)\in\cF }\right) \leq \min\left(\abs{\set{i\in[m]:\, b_i \neq 0}},\ m - 1\right).
\end{align*}
We deduce by \eqref{eq:convex_hull_main} that $\conv(\cD) = \cD_\textup{SDP}$ as long as
\begin{align}
\label{eq:ex_qmp_lower_bound}
k \geq \min\left(\abs{\set{i\in[m]:\, b_i \neq 0}} + 1,\ m\right).
\end{align}
Note that this statement immediately subsumes \cref{ex:gtrs,ex:swiss_cheese,ex:centered_constraints}. See \cite[Section 4.3]{wang2021tightness} for constructions showing that our bounds on the value of $k$ guaranteeing both objective value and convex hull exactness are sharp.
\end{example} \section{Removing the polyhedrality assumption}
\label{sec:convex_hull_exactness}
In this section, we show how to extend the results in \cref{sec:obj_val,sec:ch_exactness_polyhedral} to non-polyhedral $\Gamma$.
In contrast to the results thus far, which have been stated in terms of the faces of $\Gamma$, in this section we will focus on the faces of $\Gamma^\circ$, the polar cone of $\Gamma$. The following definitions and results are from \cite{wang2020geometric}.

\begin{definition}
For $(\hat x,\hat t)\in\cD_\textup{SDP}$, let $\cG(\hat x,\hat t)$ denote the minimal face of $\Gamma^\circ$ containing $(q_\obj(\hat x) - \hat t, q(\hat x))$.
\end{definition}

\begin{remark}
Let $\cG$ be a face of $\Gamma^\circ$. We have by definition of $\cV$ (see \cref{def:cV})
\begin{align*}
\cV(\cG^\perp) = \set{v\in\R^n:\, v^\top (\gamma_\obj A_\obj + A(\gamma))v = 0 ,\,\forall (\gamma_\obj,\gamma)\in\cG^\perp}.
\end{align*}
Note that in particular, the definition of $\cV(\cG^\perp)$ contains nonconvex quadratic constraints as $\cG^\perp\setminus\Gamma\neq\emptyset$.
\end{remark}

In the general setting where $\Gamma$ may not be polyhedral, much of our analysis will be done with the object $\cG^\perp$.
This object will replace the face of $\Gamma$ that we used extensively in \cref{sec:obj_val,sec:ch_exactness_polyhedral}. The following example compares $\cG^\perp$ with the face of $\Gamma$ that is natural to consider in this setting. Specifically, we will compare $\cG^\perp$ with $\cG^\triangle$, the conjugate face of $\cG$ in $\Gamma$. Recall that given a face $\cG$ of $\Gamma^\circ$, the conjugate face $\cG^\triangle$ is the face of $\Gamma$ given by $\cG^\triangle\coloneqq \Gamma\cap \cG^\perp$.

\begin{example}
\label{ex:G_perp_vs_G_triangle}
Consider the setting where $\Gamma$ and $\Gamma^\circ$ are both the standard second-order cone in $\R^{n+1}$. Let $\cG$ be a one-dimensional face of $\Gamma^\circ$ and let $\cG^\triangle$ be the face of $\Gamma$ conjugate to $\cG$. Then, $\cG^\triangle$ is a one-dimensional face of $\Gamma$ so that $\spann(\cG^\triangle)$ is a one-dimensional subspace. On the other hand, $\cG^\perp$ is an $n$-dimensional subspace. See \cref{fig:G_perp_vs_G_triangle} for an illustration of the relevant sets for $n=2$.

In general, $\spann(\cG^\triangle)\subseteq \cG^\perp$ where equality may not necessarily hold. On the other hand, it is possible to show that equality holds whenever $\Gamma$ and $\Gamma^\circ$ are polyhedral (see \cite[Theorem 4]{wang2020geometric}).
\end{example}

The following theorem gives a generalization of \cref{thm:convex_hull_main} by establishing a sufficient condition for convex hull exactness without relying on a polyhedrality assumption on $\Gamma$.
\begin{theorem}
\label{thm:convex_hull_general}
Suppose \cref{as:definiteness} holds. If for every $(\hat x,\hat t)\in\cD_\textup{SDP}\setminus\cD$, there exists $x'\in\cV(\cG(\hat x,\hat t)^\perp)$ and $t'\in\R$ such that $(x',t')\neq(0,0)$ and 
\begin{align}
\label{eq:convex_hull_general}
\ip{(A_\obj + A(\gamma))\hat x + (b_\obj + b(\gamma)), x'} = t',\,\forall (1,\gamma)\in\cG(\hat x, \hat t)^\perp ,
\end{align}
then $\conv(\cD) = \cD_\textup{SDP}$.
\end{theorem}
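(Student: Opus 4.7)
The plan is to mimic the iterative structure of the proof of \cref{thm:convex_hull_main}, replacing faces of $\Gamma$ by faces of $\Gamma^\circ$ and using \cref{lem:cS_SDP_description} as the link back to $\cD_\textup{SDP}$. Since $\conv(\cD)\subseteq\cD_\textup{SDP}$ always holds, it suffices to exhibit each $(\hat x,\hat t)\in\cD_\textup{SDP}$ as a convex combination of points of $\cD$. If $(\hat x,\hat t)\in\cD$ this is immediate, so assume $(\hat x,\hat t)\in\cD_\textup{SDP}\setminus\cD$, set $\cG:=\cG(\hat x,\hat t)$, and let $(x',t')$ be the witness direction supplied by the hypothesis.

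The key computation is to verify that the curve $y_\alpha := \bigl(q_\obj(\hat x+\alpha x')-(\hat t+2\alpha t'),\,q(\hat x+\alpha x')\bigr)$ lies in $\spann(\cG)$ for every $\alpha\in\R$. Expanding $y_\alpha=y_0+\alpha u+\alpha^2 w$ and pairing with any $(\gamma_\obj,\gamma)\in\cG^\perp$: the constant piece $\ip{y_0,(\gamma_\obj,\gamma)}$ vanishes because $y_0\in\cG$; the quadratic piece equals $x'^\top(\gamma_\obj A_\obj+A(\gamma))x'$, which vanishes by $x'\in\cV(\cG^\perp)$; and the linear piece is precisely the expression \eqref{eq:convex_hull_general} is designed to annihilate on $\set{(1,\gamma)\in\cG^\perp}$, extending by subspace linearity to all of $\cG^\perp$ (any two elements of $\cG^\perp$ with $\gamma_\obj=1$ differ by one with $\gamma_\obj=0$). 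Hence $y_\alpha\in(\cG^\perp)^\perp=\spann(\cG)$ identically in $\alpha$.

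By the minimality of $\cG$, $y_0\in\rint(\cG)$; since $\rint(\cG)$ is open in $\spann(\cG)$, continuity in $\alpha$ yields $y_\alpha\in\cG\subseteq\Gamma^\circ$ for $\alpha$ in an open neighborhood of $0$, and \cref{lem:cS_SDP_description} then places $(\hat x+\alpha x',\hat t+2\alpha t')$ in $\cD_\textup{SDP}$ there. Extend $\alpha$ as far as possible in each direction subject to $y_\alpha\in\cG$, producing endpoints $(\hat x_\pm,\hat t_\pm)\in\cD_\textup{SDP}$ (not both coinciding with $(\hat x,\hat t)$) of which $(\hat x,\hat t)$ is a convex combination. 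At a finite boundary value $\alpha_\pm$, $y_{\alpha_\pm}$ has exited $\rint(\cG)$ but remains in $\cG$, so it sits in a proper subface; equivalently, $\cG(\hat x_\pm,\hat t_\pm)\subsetneq\cG$. An induction on $\dim\cG$ (which is bounded by $m+1$) then reduces each endpoint to a convex combination of points of $\cD$, with base case $\cG=\set{0}$ directly forcing $q_\obj(\hat x_\pm)=\hat t_\pm$ and $q(\hat x_\pm)=0$, i.e., $(\hat x_\pm,\hat t_\pm)\in\cD$.

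The main obstacle I anticipate is the case where the line $\alpha\mapsto y_\alpha$ never exits $\cG$, so the entire affine line $\set{(\hat x+\alpha x',\hat t+2\alpha t'):\alpha\in\R}$ lies in $\cD_\textup{SDP}$ and no finite boundary $\alpha_\pm$ is produced by the above extension. Handling this requires either showing that $(x',2t')$ is then a recession direction of $\cD$ itself—allowing one to split $(\hat x,\hat t)$ against a recession ray of $\cD$ and a point of $\cD$—or refining the choice of witness direction by adding a transverse perturbation that forces the line to leave $\cG$. Interleaving this recession case cleanly with the strictly decreasing $\dim\cG$ induction is the genuinely delicate part of the argument; once it is resolved, the remainder of the proof is essentially linear bookkeeping.
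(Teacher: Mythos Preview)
The paper does not actually prove \cref{thm:convex_hull_general}; it merely states the result and cites \cite{wang2020geometric} for the proof. Your proposal is therefore not being compared against a proof in this document but against the natural extension of the proof sketch of \cref{thm:convex_hull_main}, and in that respect your overall architecture---showing $y_\alpha\in\spann(\cG)$ identically in $\alpha$, exploiting $y_0\in\rint(\cG)$ for local feasibility, pushing to the relative boundary of $\cG$, and inducting on $\dim\cG$---is exactly right and matches what the full argument in \cite{wang2020geometric} does.

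There are two genuine soft spots. First, the recession case you yourself flag is not a side technicality: when $y_\alpha$ never leaves $\cG$ the induction stalls, and your two suggested fixes (show $(x',2t')\in\rec(\cD)$, or perturb the witness to force exit) are both plausible but neither is argued. In the actual proof one shows that such a line forces $(x',2t')$ to be a recession direction of $\cD_\textup{SDP}$, and then uses \cref{as:definiteness} together with the structure of $\cD$ to conclude it is a recession direction of $\cD$ as well; this step requires real work and is where the definiteness assumption is genuinely consumed. Second, your ``extending by subspace linearity'' step tacitly assumes $\{(1,\gamma)\in\cG^\perp\}\neq\emptyset$, i.e., that $e_1\notin\spann(\cG)$. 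This fails precisely when $(\hat x,\hat t)$ lies strictly above the lower boundary of $\cD_\textup{SDP}$ (so that $(\hat x,\hat t-\epsilon)\in\cD_\textup{SDP}$ for small $\epsilon>0$), in which case the hypothesis \eqref{eq:convex_hull_general} is vacuous and the linear term need not vanish for $(0,\gamma)\in\cG^\perp$. The standard remedy is to first reduce to the lower boundary of $\cD_\textup{SDP}$ using the recession direction $(0_n,1)$ of $\cD$, which simultaneously handles this edge case and one instance of the recession obstacle; you should make this reduction explicit before invoking the key computation.
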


\begin{remark}
It is possible to show (see the proof of \cite[Lemma 2]{wang2020geometric}) that when $\Gamma^\circ$ is facially exposed (as is the case when $\Gamma$ is polyhedral), for every face $\cG$ of $\Gamma^\circ$, we have 
\begin{align*}
\cV(\cG^\perp) = \cV(\cG^\triangle).
\end{align*}
In particular, defining $\cF(\hat x,\hat t) \coloneqq \cG(\hat x,\hat t)^\triangle$, we may rewrite the condition of \cref{thm:convex_hull_general} as the assumption that for every $(\hat x,\hat t)\in\cD_\textup{SDP}\setminus \cD$,
\begin{align*}
\aff\left(\Pi_{\cV(\cF(\hat x,\hat t))}\set{(A_\obj + A(\gamma))\hat x + (b_\obj+b(\gamma)):\, (1,\gamma)\in\cG^\perp}\right) \neq \cV(\cF(\hat x,\hat t)).
\end{align*}
One may compare this condition of \cref{thm:convex_hull_general} with the condition in \cref{thm:convex_hull_main}. We additionally conjecture that $\cV(\cG^\perp) = \cV(\cG^\triangle)$ even without the facially exposed assumption.
\end{remark}

\begin{figure}
	\begin{center}
		\begin{tikzpicture}

			\node[anchor=south] (image) at (5,0) {
				\includegraphics[width=0.25\textwidth]{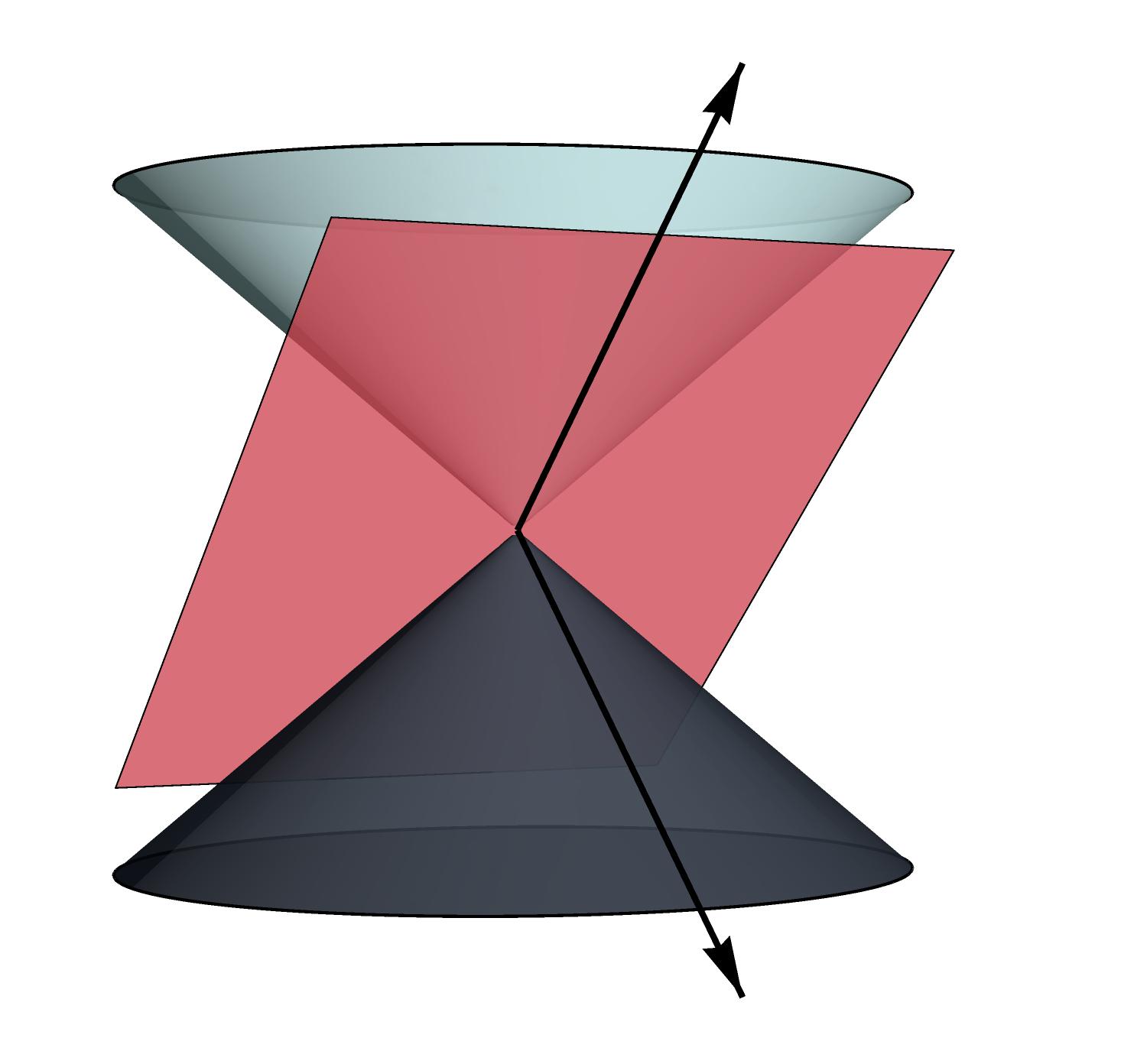}
			};
			\node[anchor=south west] at (5.6,3.7){$\cG^\triangle$};
			\node[anchor=north west] at (5.6,0.38) {$\cG$};
			\node[anchor=south east] at (3.4, 3.33) {$\Gamma$};
			\node[anchor=north east] at (3.4, 0.77) {$\Gamma^\circ$};
			\node[anchor=west] at (6, 2.42) {$\cG^\perp$};
		\end{tikzpicture}
	\end{center}
	\caption{This figure plots $\Gamma$, $\Gamma^\circ$, $\cG$, $\cG^\triangle$, and $\cG^\perp$ from \cref{ex:G_perp_vs_G_triangle}. Note that $\cG^\triangle\subseteq\spann(\cG^\triangle)\subsetneq \cG^\perp$. Specifically, $\cG^\perp$ contains additional directions tangent to $\Gamma$ at $\cG^\triangle$.}
	\label{fig:G_perp_vs_G_triangle}
\end{figure}

\begin{remark}
The main results of \cite{wang2020geometric} show that the sufficient condition presented in \cref{thm:convex_hull_general} is in fact also \emph{necessary} in the setting where $\Gamma^\circ$ is \emph{facially exposed} (see \cite[Theorems 1 and 2]{wang2020geometric}). The condition that $\Gamma^\circ$ is facially exposed holds for example when $\Gamma^\circ$ is a (slice of the) nonnegative cone, second-order cone, or the positive semidefinite cone. See \cite{pataki2013connection} for a longer discussion of this assumption and its connections to the \emph{nice cones}. In general, all nice cones are facially exposed.
\end{remark}

We close this section with two examples that illustrate the use of \cref{thm:convex_hull_general}.
\begin{example}
[A mixed-binary set]
\label{ex:big-M}
Consider the following toy example. Define
\begin{align*}
\cD \coloneqq \set{(x,y,t):\, \begin{array}
	{l}
	x^2  \leq t\\
	x(1-y) = 0\\
	y(1-y) = 0
\end{array}}.
\end{align*}
In words, $y$ is a binary variable and $x$ is constrained to be zero whenever $y$ is ``off.'' Furthermore, $t\geq x^2$.
The convex hull of this set is well known to be given by a perspective reformulation trick~\cite{gunluk2010perspective,ceria1999convex,frangioni2006perspective}. On the other hand, it is also possible to show that $\conv(\cD) = \cD_\textup{SDP}$ using \cref{thm:convex_hull_general} (see \cite[Section 4.3]{wang2020geometric} for details). Specifically, for this example the set $\Gamma$ (whence also $\Gamma^\circ$) is given by a rotated second-order cone. Verifying that the conditions of \cref{thm:convex_hull_general} hold is then a slightly tedious but ultimately straightforward task.
\end{example}

\begin{example}
[QMP]Recall the quadratic matrix programming framework previously considered in \cref{ex:qmp}. Specifically, suppose that for each $i\in[m]$, we can write
\begin{align*}
A_i = I_k \otimes \A_i
\end{align*}
for some $\A_i \in\S^{n/k}$.
Similarly suppose $A_\obj = I_k \otimes \A_\obj$.
Previously, we saw that $\conv(\cD) = \cD_\textup{SDP}$ if $\Gamma$ is polyhedral and $k$ satisfies the lower bound in \eqref{eq:ex_qmp_lower_bound}.
In the case of \emph{arbitrary} $\Gamma$, using \cref{thm:convex_hull_general}, \cite[Section 4.1]{wang2020geometric} shows that that $\conv(\cD) = \cD_\textup{SDP}$ whenever $k\geq m + 2$; see also \cite{wang2021tightness}.
The QMP setting was previously considered by \citet{beck2007quadratic} who proved that $\Opt=\Opt_\textup{SDP}$ whenever $k\geq m$. \cref{table:qmp_exactness} (reproduced from \cite[Remark 14]{wang2021tightness}) gives a summary of the known exactness results for QMPs without polyhedral $\Gamma$ and in particular compares the results of \cite{beck2007quadratic} with the results that we may derive through our framework. See \cite{beck2012new} for further results on exactness in quadratic matrix programming.
\end{example}
\begin{table}
\begin{center}
\begin{tabular}{@{}lll@{}}\toprule
Assumption & Result & Reference\\\midrule
$k \geq m_I + m_E +2$ & $\conv(\cD) = \cD_\textup{SDP}$ &\quad \cite[Theorem 7]{wang2021tightness}\\
$k \geq m_I + m_E +1$ & $\conv(\cD\cap \cH) = \cD_\textup{SDP} \cap \cH$ &\quad \cite[Theorem 8]{wang2021tightness}\\
$k \geq m_I + m_E$ & $\Opt = \Opt_\textup{SDP}$ &\quad \cite[Corollary 4.4]{beck2007quadratic}\\\midrule
\end{tabular}
\caption{A comparison of known exactness results for quadratic matrix programs where $\cH\coloneqq \set{(x,t):\, t = \Opt}$. Note that all of these results assume that \cref{as:definiteness} holds. Here based on the definition of $\cH$, the second row states that \cite[Theorem 8]{wang2021tightness} shows that when $k = m_I + m_E + 1$, the convex hull of the optimizers of the nonconvex QCQP is given by the optimizers of the SDP.}
\label{table:qmp_exactness}
\end{center}
\end{table} \section{Beyond convex hull exactness: Rank-one generated cones}
\label{sec:ROG}

In this section, we move beyond objective value and convex hull exactness and investigate the rank-one generated property of closed convex cones $\cS(\cM)\subseteq\S^{n+1}$.
Recall that we call a closed convex cone
\begin{align*}
\cS(\cM)\coloneqq \set{Z\in\S^{n+1}_+:\, \begin{array}
	{l}
	\ip{M,Z}\leq 0,\,\forall M\in\cM\\
	Z\succeq 0
\end{array}}
\end{align*}
rank-one generated if $\cS(\cM) = \conv(\cS(\cM)\cap \set{zz^\top:\, z\in\R^{n+1}})$.

\begin{example}
\label{ex:rog_first_examples}
The material in this section is best understood with a few examples and nonexamples of ROG cones in mind:
\begin{enumerate}
	\item By the spectral theorem, the PSD cone $\S^n_+$ is ROG. In particular $\cS(\emptyset)$ is ROG.
	\item A well-known result says that $\cS(\cM)$ is ROG whenever $\abs{\cM} = 1$ (see \cite{sturm2003cones} and \cref{lem:M_leq_1}). We illustrate this fact in $\S^2_+$ in \cref{fig:rog_first_examples}.
	\item There exist $\cM$ with $\abs{\cM}=2$ such that $\cS(\cM)$ is not ROG. Specifically, consider $M_1 = \left(\begin{smallmatrix}
		1 &\\
		&-1
	\end{smallmatrix}\right)$ and $M_2 = \left(\begin{smallmatrix}
		0 & 1\\
		1 & 0
	\end{smallmatrix}\right)$. It is clear that $I\in\cS(\cM)$. We claim that $I \notin \conv\left(\cS(\cM)\cap \set{zz^\top:\, z\in\R^2}\right)$. Indeed, supposing otherwise, we deduce from $\ip{M_1,I} = 0$ and $\ip{M_2, I} = 0$ that $I$ is a convex combination of rank-one matrices $zz^\top$ satisfying $0= \ip{M_1, zz^\top} = z_1^2 - z_2^2$ and $0=\ip{M_2,zz^\top} = 2z_1z_2$. This is a contradiction as the only vector $z\in\R^2$ satisfying $z_1^2 - z_2^2 = 2z_1z_2 = 0$ is the zero vector. We illustrate this fact in \cref{fig:rog_first_examples}.\qedhere
\end{enumerate}
\end{example}

\begin{figure}
	\begin{center}
		\begin{tikzpicture}
			\node[anchor=south] (image) at (0,0) {
				\includegraphics[width=0.25\textwidth]{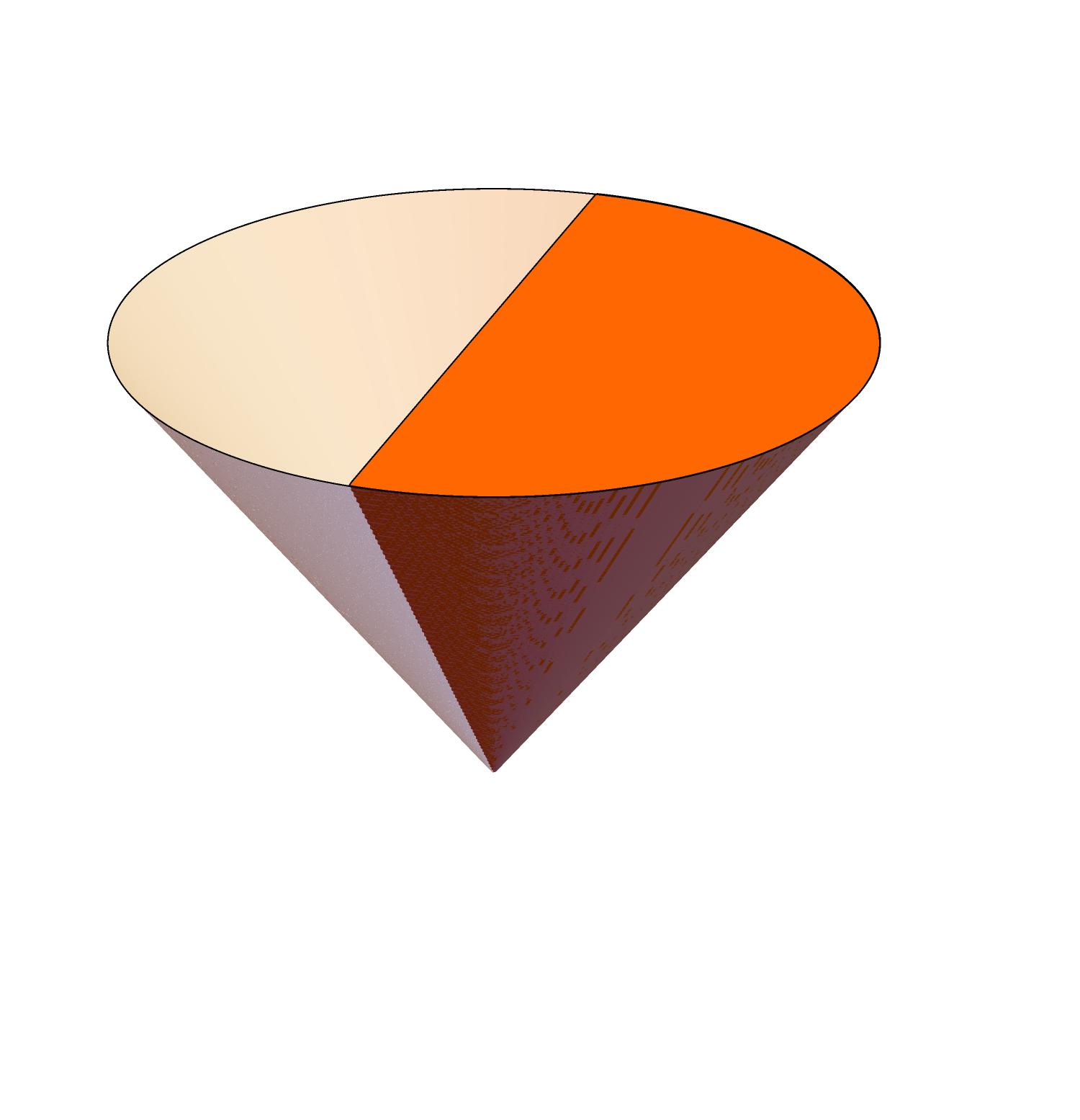}
			};
			\node[anchor=south] (image) at (5,0) {
				\includegraphics[width=0.25\textwidth]{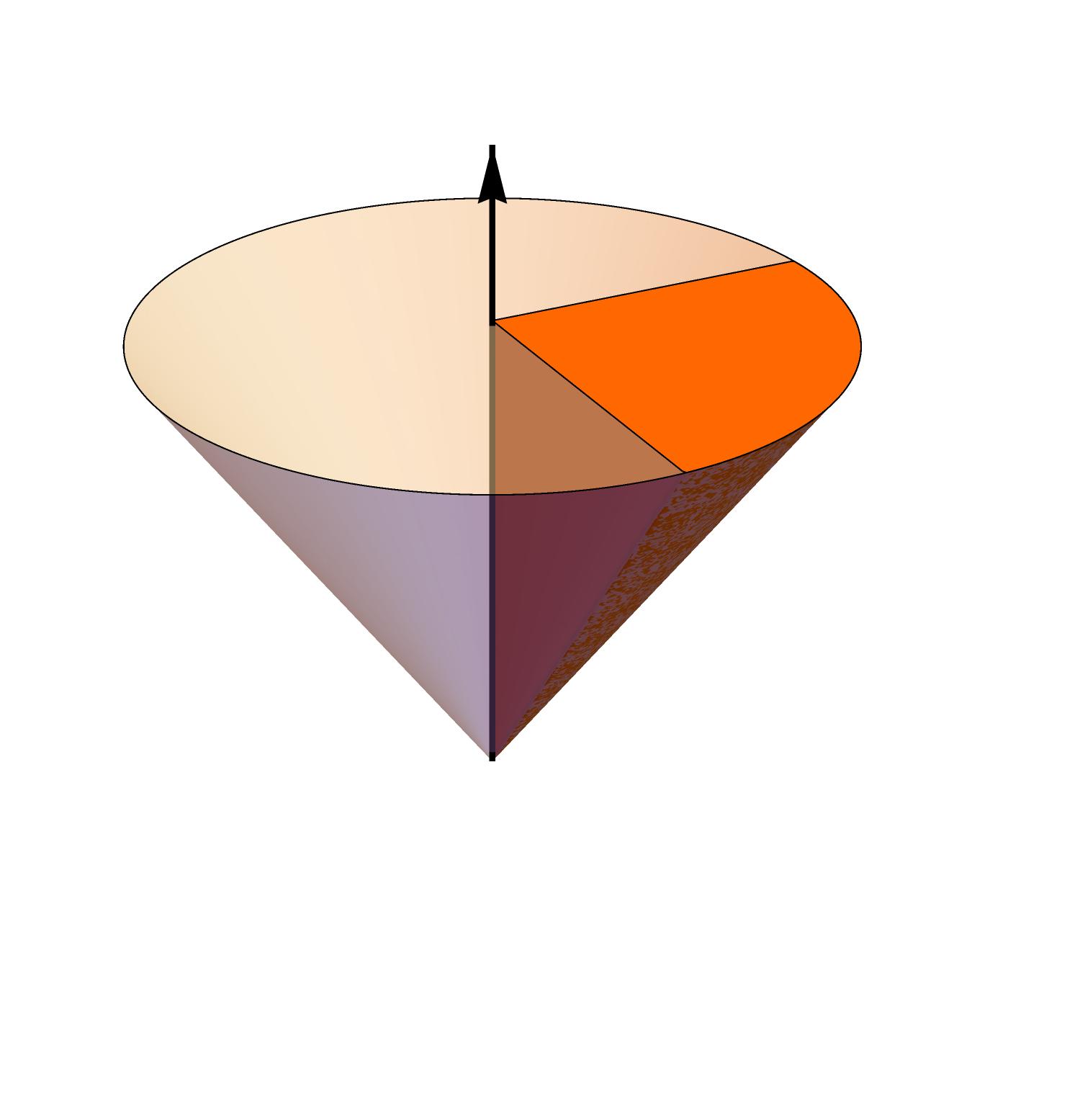}
			};
			\node[anchor=south] at (4.75,3.95) {$I$};
		\end{tikzpicture}
	\end{center}
	\vspace{-3.5em}
  \caption{This figure is adapted from \cite[Figure 3]{argue2020necessary}.
  Every point in the interior of $\S^2_+$ has rank two and every point on the boundary of $\S^2_+$ has rank at most one. The set on the left, $\cS(\set{M_1})$, is ROG as it is equal to the convex hull of its rank-one matrices. The set on the right, $\cS(\cM)$ as defined in the third example of \cref{ex:rog_first_examples}, is not ROG. Specifically, the identity matrix is not in the convex hull of the rank-one matrices belonging to $\cS(\cM)$ in \cref{ex:rog_first_examples}(3).}
  \label{fig:rog_first_examples}
\end{figure}

\cref{subsec:QCQP_exactness} begins by examining connections between the ROG property, variants of the S-lemma, and the exactness conditions studied in the previous sections.
\cref{sec:ROG_LMIs_to_LMEs,sec:ROG_operations,sec:ROG_quadratic_sol} present our toolkit for studying the ROG property.
Using this toolkit, \cref{sec:sufficient_conditions} then derives a number of new sufficient conditions for the ROG property. We complement these sufficient conditions in \cref{sec:necessary_conditions} with our main result (\cref{thm:two_LMI_NS})---a necessary and sufficient condition for the ROG property of spectrahedral cones defined by two LMIs. While we will not formally investigate \cref{thm:two_LMI_NS} until \cref{sec:necessary_conditions}, we will nonetheless mention some of its corollaries before \cref{sec:necessary_conditions}. The interested reader is encouraged to take a look at the statement of \cref{thm:two_LMI_NS} ahead of \cref{rem:ROG_stronger_than_ch_exactness,ex:lifting_rog_to_non_rog}.
We conclude in \cref{subsec:quadratic_ratios} with an application showing how the ROG toolbox can be used to recover SDP exactness results for minimizing \emph{ratios} of quadratic functions over quadratically constrained domains.

\subsection{Connections to objective value exactness and convex hull exactness}
\label{subsec:QCQP_exactness}
The ROG property of a cone $\cS(\cM)$ is intimately related to exactness results
for both homogeneous and inhomogeneous QCQPs and their relaxations.

\subsubsection{Objective value exactness and the ROG property}
We begin with objective value exactness results based on the ROG property. 
To this end, the following lemma (see \cite[Lemma 19]{argue2020necessary}) says that a cone $\cS\subseteq\S^{n}_+$ is ROG if and only if
the SDP relaxation of the corresponding homogeneous QCQP is exact for all choices of objective function.

\begin{lemma}\label{lem:homogeneousSDPtightness}
Let $\cA\subseteq\S^{n}$. Then $\cS(\cA)$ is ROG if and only if for every
$A_\obj\in\S^{n}$,
\begin{align}
\label{eq:homogeneous_sdp_exactness}
\inf_{x\in\R^{n}}\set{\ip{A_\obj,xx^\top}:\, xx^\top\in\cS(\cA)} = \inf_{X\in \S^n}\set{\ip{A_\obj,X}:\,X\in \cS(\cA)}.
\end{align}
\end{lemma}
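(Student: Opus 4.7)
The plan is to exploit the fact that both sides of \eqref{eq:homogeneous_sdp_exactness} are \emph{homogeneous} infima over cones containing $0$, and hence must lie in $\{-\infty,0\}$: the feasibility of $x=0$ (resp.\ $X=0$) gives an upper bound of $0$, while if any feasible point achieves a strictly negative objective, conic scaling drives the infimum to $-\infty$. With both sides pinned to $\{-\infty,0\}$, proving equality reduces to showing that one side is $-\infty$ iff the other is.

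For the forward ($\Rightarrow$) direction, I would assume $\cS(\cA)$ is ROG and fix an arbitrary $A_\obj\in\S^n$. The SDP value is $-\infty$ iff there exists some $X\in\cS(\cA)$ with $\langle A_\obj,X\rangle<0$. Using the ROG decomposition $X=\sum_i z_iz_i^\top$ with each $z_iz_i^\top\in\cS(\cA)$, linearity of $\langle A_\obj,\cdot\rangle$ forces at least one $z_i$ to satisfy $\langle A_\obj,z_iz_i^\top\rangle<0$, and that $z_i$ certifies that the QCQP value is $-\infty$. The converse implication (QCQP $=-\infty$ implies SDP $=-\infty$) is immediate from $\{xx^\top:xx^\top\in\cS(\cA)\}\subseteq\cS(\cA)$.

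For the reverse ($\Leftarrow$) direction, I would argue by contrapositive: suppose $\cS(\cA)$ is not ROG, and construct a witness objective $A_\obj$ on which the two values differ. Let $T\coloneqq\conv(\cS(\cA)\cap\{zz^\top:z\in\R^n\})$; by the remark following \cref{def:ROG}, $T$ is a \emph{closed} convex cone, and by assumption there exists $X_0\in\cS(\cA)\setminus T$. Applying a standard hyperplane separation theorem to the closed convex cone $T$ and the point $X_0$ in the Euclidean space $\S^n$, I obtain $A_\obj\in\S^n$ with $\langle A_\obj,X_0\rangle<0$ and $\langle A_\obj,X\rangle\geq 0$ for all $X\in T$. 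The second inequality applied to rank-one matrices in $\cS(\cA)$ pins the QCQP value at $0$, while $X_0$ (rescaled) drives the SDP value to $-\infty$, producing the desired mismatch.

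The only subtle point is the closedness of $T$, which is needed so that strict hyperplane separation produces a genuine $A_\obj\in\S^n$ rather than a limiting direction; this is handled cleanly by the remark after \cref{def:ROG}. Everything else is bookkeeping about homogeneity and the cone structure of $\cS(\cA)$.
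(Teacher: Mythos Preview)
The paper does not actually prove this lemma; it simply cites \cite[Lemma 19]{argue2020necessary} without reproducing the argument. Your proof is correct and is the natural one: homogeneity forces both infima into $\{-\infty,0\}$; for the forward direction, decomposing an $X\in\cS(\cA)$ with negative objective into a finite convex combination of rank-one elements of $\cS(\cA)$ (Carath\'eodory guarantees finiteness) produces a rank-one witness with negative objective; for the converse, separating an $X_0\in\cS(\cA)\setminus T$ from the closed convex cone $T$ yields the objective $A_\obj$ on which the two values differ. The only point worth a remark is that strict separation from a closed convex \emph{cone} automatically gives $\langle A_\obj,X\rangle\geq 0$ on $T$ and $\langle A_\obj,X_0\rangle<0$ (since $0\in T$ and any negative value on $T$ could be scaled to $-\infty$), which you implicitly use; the closedness of $T$ needed here is indeed supplied by the remark following \cref{def:ROG}.
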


\cref{lem:homogeneousSDPtightness} is closely related to S-lemma type results. Recall that the S-lemma, which can be traced back to \cite{yakubovich1971s,dines1941mapping}, states the following: Let $A_1\in\S^n$ such that for some $x\in\R^n$, we have $x^\top A_1 x<0$. Then, for all $A_\obj\in\S^n$,
\begin{align*}
x^\top A_1 x \leq 0 \implies x^\top A_\obj x \geq 0
\end{align*}
if and only if there exists $\gamma_1\geq 0$ such that
\begin{align*}
A_\obj + \gamma_1 A_1 \succeq 0.
\end{align*}
In words, the S-lemma gives conditions under which any homogeneous quadratic consequence inequality has a PSD certificate. See also the survey article \cite{polik2007survey} on the S-lemma and its variants.

\cref{lem:homogeneousSDPtightness} allows us to write a variant of the S-lemma in the setting of ROG cones. Specifically, let $\cA\coloneqq \set{A_1,\dots,A_m}\subseteq\S^n$ and suppose there exists $x\in\R^n$ such that $x^\top A_i x< 0$ for all $i\in[m]$. This assumption, a Slater condition, ensures that strong duality holds between the following SDP and its dual:
\begin{align}
\label{eq:rog_slemma_sdp}
\inf_{X\in\S^n}\set{\ip{A_\obj, X}:\, X\in\cS(\cA)}&= \sup_{\gamma\in\R^m_+}\set{0:\,
	A_\obj + \sum_{i=1}^m \gamma_i A_i\succeq 0}.
\end{align}
In particular, \eqref{eq:rog_slemma_sdp} takes the value $0$ if and only if there exists $\gamma\in\R^m_+$ such that $A_\obj + \sum_{i=1}^m A_i \succeq 0$.
On the other hand,
\begin{align*}
\inf_{x\in\R^n}\set{x^\top A_\obj x:\, xx^\top \in\cS(\cA)}
\end{align*}
takes the value $0$ if and only if
$x^\top A_\obj x\geq 0$ for every $x\in\R^n$ such that
$x^\top A_i x\leq 0$ for all $i\in[m]$.

The following statement then follows from the above observations and \cref{lem:homogeneousSDPtightness}.
\begin{corollary}
\label{cor:rog_slemma_hom}
Let $\cA\coloneqq \set{A_1,\dots,A_m}\subseteq\S^n$ and suppose there exists $\bar x\in\R^n$ such that $\bar x^\top A_i \bar x<0$ for all $i\in[m]$. Then, the following are equivalent
\begin{itemize}
	\item For all $A_\obj\in\S^n$,
	\begin{align*}
	x^\top A_i x \leq 0 ,\,\forall i\in[m]\quad\implies\quad
	x^\top A_\obj x \geq 0
	\end{align*}
	if and only if there exists $\gamma\in\R^m_+$ such that 
	\begin{align*}
	A_\obj + \sum_{i=1}^m \gamma_i A_i \succeq 0.
	\end{align*}
	\item $\cS(\cA)$ is ROG.
\end{itemize}
\end{corollary}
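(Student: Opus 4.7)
The plan is to deduce Corollary~5.3 directly from Lemma~5.1, combining it with the two observations about (5.2) that the paper has already spelled out in the paragraph preceding the corollary. No new ROG machinery is needed: the entire argument is a careful bookkeeping between values of optimization problems and their logical meaning.

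First, I would observe that both sides of (5.2) are homogeneous and hence attain values only in $\set{0,-\infty}$: scaling $x$ by $\lambda\in\R$ (resp.\ scaling $X$ by $\lambda^2\geq 0$) preserves feasibility and scales the objective by $\lambda^2$, and the zero matrix is always feasible for both problems. Since $X=xx^\top$ is feasible for the SDP whenever $xx^\top\in\cS(\cA)$, the SDP value is always $\le$ the QCQP value. For a fixed $A_\obj$, equality in (5.2) is therefore equivalent to the implication ``QCQP value $=0 \Longrightarrow $ SDP value $=0$''.

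Next I would plug in the two characterizations already provided in the text. The QCQP value equals $0$ iff the S-lemma-style implication
\[
x^\top A_i x \le 0 \quad\forall i\in[m] \;\Longrightarrow\; x^\top A_\obj x \ge 0
\]
holds. By Slater's condition (guaranteed by the assumed $\bar x$), strong duality holds between the SDP and its dual displayed in (5.2); hence the SDP value equals $0$ iff there exists $\gamma\in\R^m_+$ with $A_\obj+\sum_i \gamma_i A_i\succeq 0$, i.e., iff the PSD certificate exists. Thus ``(5.2) holds for all $A_\obj$'' is equivalent to ``for every $A_\obj$, the implication entails the certificate''. The converse direction ``certificate entails implication'' is immediate: if $A_\obj+\sum_i \gamma_i A_i\succeq 0$ with $\gamma\ge 0$ and $x$ satisfies $x^\top A_i x\le 0$ for all $i$, then $x^\top A_\obj x\ge -\sum_i \gamma_i\,x^\top A_i x\ge 0$. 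Hence the full biconditional in the first bullet is equivalent to (5.2) holding for all $A_\obj$, which by Lemma~5.1 is equivalent to $\cS(\cA)$ being ROG.

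I do not anticipate a real obstacle; the only point requiring care is verifying strong duality in (5.2) from the Slater condition (the dual $\sup\set{0:\,A_\obj+\sum\gamma_i A_i\succeq 0,\,\gamma\ge 0}$ is either $0$ or $-\infty$, so ``SDP value $=0$'' really does coincide with ``PSD certificate exists''), and keeping straight the direction of the inequality SDP $\le$ QCQP so that equality reduces to the single implication above. With these in hand, the corollary is a one-line consequence of Lemma~5.1.
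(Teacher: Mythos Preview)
Your proposal is correct and follows essentially the same approach as the paper: the paper simply states that the corollary follows from the observations in the preceding paragraph (that the QCQP and SDP values each lie in $\set{0,-\infty}$, that the QCQP value is $0$ iff the implication holds, and that by Slater the SDP value is $0$ iff a PSD certificate exists) together with \cref{lem:homogeneousSDPtightness}. You have spelled out the bookkeeping in slightly more detail (e.g., making explicit the reduction to the single implication ``QCQP $=0\Rightarrow$ SDP $=0$'' and the trivial converse ``certificate $\Rightarrow$ implication''), but the argument is the same.
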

In words, under a Slater condition, the S-lemma variant for $\cA$ and an arbitrary $A_\obj$ holds if and only if $\cS(\cA)$ is ROG.

Let us now examine the relation in the case of a general inequality-constrained QCQP and its SDP relaxation.
Note that by introducing a homogenizing variable, a general QCQP \eqref{eq:opt} and its SDP relaxation \eqref{eq:optsdp} can be rewritten as
\begin{align*}
\inf_{z\in\R^{n+1}}\set{\ip{M_\obj, zz^\top}:\, \begin{array}
	{l}
	zz^\top \in\cS(\cM)\\
	\ip{e_{n+1}e_{n+1}^\top, zz^\top} = 1
\end{array}}
&\geq
\inf_{Z\in\S^n}\set{\ip{M_\obj, Z}:\, \begin{array}
	{l}
	Z \in\cS(\cM)\\
	\ip{e_{n+1}e_{n+1}^\top, Z} = 1
\end{array}} .
\end{align*}
In particular, we may always rewrite our QCQPs to contain exactly one inhomogeneous equality constraint.
The following result (see \cite[Lemma 20]{argue2020necessary}) relates the ROG property of a cone to SDP exactness results for its corresponding inhomogeneous QCQPs. 

\begin{lemma}\label{lem:SDPtightness}
Let $\cM\subseteq\S^{n+1}$ and $B\in\S^{n+1}$.
If $\cS(\cM)$ is ROG, then
\begin{align}
\label{eq:SDPtightness}
\inf_{z\in\R^{n+1}}\set{\ip{M_\obj, zz^\top}:\, \begin{array}
	{l}
	zz^\top\in\cS(\cM),\\
\ip{B,zz^\top} = 1
\end{array}}
= \inf_{Z\in\S^{n+1}}\set{\ip{M_\obj, Z}:\, \begin{array}
	{l}
	Z\in\cS(\cM),\\
\ip{B,Z} = 1
\end{array}}
\end{align}
for all $M_\obj\in\S^{n+1}$ for which the optimum SDP objective value is bounded from below. In particular, this equality holds whenever the SDP feasible domain is bounded.
\end{lemma}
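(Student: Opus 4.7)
The inequality $\geq$ is immediate: any QCQP-feasible $zz^\top$ is SDP-feasible with the same objective. For the reverse direction, let $\tau_\textup{SDP}$ denote the SDP value and assume $\tau_\textup{SDP}>-\infty$ (if the SDP is infeasible, both infima equal $+\infty$ and the claim is trivial). Fix $\epsilon>0$ and choose a feasible $Z$ with $\ip{M_\obj,Z}\leq \tau_\textup{SDP}+\epsilon$. Invoking the ROG property (together with Carath\'eodory), decompose $Z = \sum_{j=1}^N z_j z_j^\top$ with each $z_j z_j^\top \in \cS(\cM)$. Set $\alpha_j := \ip{M_\obj, z_j z_j^\top}$ and $\beta_j := \ip{B, z_j z_j^\top}$, so that $\sum_j \beta_j = 1$ and $\sum_j \alpha_j \leq \tau_\textup{SDP}+\epsilon$. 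The plan is to identify a single index $k$ for which the rescaled rank-one matrix $z_k z_k^\top/\beta_k$ is QCQP-feasible and satisfies $\alpha_k/\beta_k \leq \tau_\textup{SDP}+\epsilon$.

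Partition $[N]$ into $J_+, J_0, J_-$ according to the sign of $\beta_j$. Since $\sum_j \beta_j = 1 > 0$, $J_+\neq \emptyset$; for each $k \in J_+$, $z_k z_k^\top/\beta_k \in \cS(\cM)$ (conic closure) and $\ip{B, z_kz_k^\top/\beta_k} = 1$, giving a QCQP-feasible point of value $\alpha_k/\beta_k$. Set $\tau^* := \min_{k\in J_+} \alpha_k/\beta_k$; it suffices to show $\tau^* \leq \tau_\textup{SDP}+\epsilon$. I will establish this via the pointwise inequality $\alpha_j \geq \tau^* \beta_j$ for every $j$: summing over $j$ and using $\sum_j \beta_j = 1$ yields $\tau_\textup{SDP}+\epsilon \geq \sum_j \alpha_j \geq \tau^*$.

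The bound for $k\in J_+$ is by definition. The bounds for $J_0$ and $J_-$ exploit that $\tau_\textup{SDP}$ is bounded below via suitable feasible perturbations. For $j \in J_0$: the ray $Z + t\, z_j z_j^\top$ ($t\geq 0$) stays in $\cS(\cM)$ (a convex cone) and preserves $\ip{B,\cdot}=1$, so boundedness forces $\alpha_j \geq 0 = \tau^* \beta_j$. For $j \in J_-$: pick any $k \in J_+$ and consider $w := z_j z_j^\top + \tfrac{-\beta_j}{\beta_k}\, z_k z_k^\top$, a nonnegative combination of elements of $\cS(\cM)$ (so $w\in \cS(\cM)$) satisfying $\ip{B,w}=0$ by construction; applying boundedness to the ray $Z+tw$ yields $\alpha_j - \tfrac{\beta_j}{\beta_k}\alpha_k \geq 0$, which (since $\beta_j<0<\beta_k$) rearranges to $\alpha_j/\beta_j \leq \alpha_k/\beta_k$ and hence, multiplying by the negative quantity $\beta_j$, to $\alpha_j \geq \tau^* \beta_j$.

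The main obstacle is precisely the handling of $J_-$: these rank-one pieces cannot be directly rescaled into QCQP-feasible points, and the coupling with $J_+$ via the perturbation $w$ above is where the bounded-below hypothesis enters essentially. Letting $\epsilon \downarrow 0$ then gives the desired inequality. Finally, the special case in the statement is immediate: if the SDP feasible set is bounded, then (being also closed as the intersection of closed sets) it is compact, so the continuous linear functional $Z \mapsto \ip{M_\obj, Z}$ attains a finite minimum and the bounded-below hypothesis applies.
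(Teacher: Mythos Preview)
Your proof is correct and follows essentially the same approach as the one the paper cites from \cite{argue2020necessary}: decompose an $\epsilon$-optimal SDP solution into rank-one pieces via the ROG property and Carath\'eodory, partition by the sign of $\ip{B,z_jz_j^\top}$, and use recession-direction arguments (exploiting the bounded-below hypothesis) to control the contributions from $J_0$ and $J_-$ so that some rescaled piece from $J_+$ gives a QCQP-feasible point with value at most $\tau_\textup{SDP}+\epsilon$. This is precisely the constructive argument alluded to in the remark following the lemma.
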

By taking $B = e_{n+1}e_{n+1}^\top$ in \cref{lem:SDPtightness}, we have that objective value exactness holds whenever $\cS(\cM)$ is ROG and the SDP optimum value is bounded from below.

The freedom to pick $B\neq e_{n+1}e_{n+1}^\top$ in \cref{lem:SDPtightness} will be useful in \cref{subsec:quadratic_ratios} where we will use it to analyze the problem of minimizing a ratio of quadratic functions over a quadratically constrained domain.

\begin{remark}
We make a few additional observations about \cref{lem:SDPtightness}.
First, \cref{lem:SDPtightness} extends \cite[Lemma 1.2]{hildebrand2016spectrahedral}, which shows that the same statement holds in the case of finitely many linear matrix equalities. The proof in \cite{argue2020necessary} also differs from the proof in \cite{hildebrand2016spectrahedral} as it immediately shows how to construct a QCQP feasible solution achieving the SDP value (or a sequence approaching the SDP value).
Next, we highlight that the reverse implication in \cref{lem:SDPtightness} is not true in general. Specifically, there exist cones $\cS(\cM)$ for which equality holds in \eqref{eq:SDPtightness} for all $M_\obj$ for which the SDP objective value is bounded below that are \emph{not} ROG; see \cite[Example 4]{argue2020necessary}.
Finally, it is possible to show that the boundedness assumption in~\cref{lem:SDPtightness} cannot be dropped even in the case where $B$ is specialized to $B=e_{n+1}e_{n+1}^\top$; see \cite[Example 5]{argue2020necessary}.
\end{remark}

As in \cref{cor:rog_slemma_hom}, we may use \cref{lem:SDPtightness} to derive the following ROG-based variant of the inhomogeneous S-lemma.
\begin{corollary}
Let $\cM\coloneqq \set{M_1,\dots,M_m}\subseteq\S^{n+1}$ and $M_\obj\in\S^{n+1}$ be such that
\begin{itemize}
	\item $\cS(\cM)$ is ROG,
	\item there exists $\bar x\in\R^n$ such that $q_i(\bar x)<0$ for all $i\in[m]$, and
	\item there exists $\bar\gamma\in\R^m_+$ and $\bar\alpha\in\R$ such that $M_\obj + \sum_{i=1}^m \bar\gamma_i M_i - \bar\alpha e_{n+1}e_{n+1}^\top\in\S^{n+1}_+$.
\end{itemize}
Then,
\begin{align*}
q_i(x)\leq 0,\,\forall i\in[m] \quad\implies\quad q_\obj(x) \geq \alpha
\end{align*}
if and only if there exists $\gamma\in\R^m_+$ such that
\begin{align*}
q_\obj(x) + \sum_{i=1}^m \gamma_i q_i(x) \geq \alpha.
\end{align*}
\end{corollary}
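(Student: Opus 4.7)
The plan is to prove the backward direction by direct nonnegative aggregation and to establish the forward direction by combining the ROG-based exactness result of \cref{lem:SDPtightness} with standard SDP strong duality. For the backward direction, I would observe that if $\gamma \in \R^m_+$ satisfies $q_\obj(x) + \sum_{i=1}^m \gamma_i q_i(x) \geq \alpha$ for every $x \in \R^n$, then for any $x$ with $q_i(x) \leq 0$ for all $i$, each summand $\gamma_i q_i(x)$ is nonpositive, and hence $q_\obj(x) \geq \alpha - \sum_{i=1}^m \gamma_i q_i(x) \geq \alpha$. This direction uses neither the ROG hypothesis nor the Slater assumption.

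For the forward direction, I would recast the hypothesis via the homogenization $z = (x,1)^\top$ as $\inf\{\ip{M_\obj, zz^\top} : zz^\top \in \cS(\cM),\ \ip{e_{n+1}e_{n+1}^\top, zz^\top} = 1\} \geq \alpha$. The third assumption of the corollary supplies a dual-feasible pair $(\bar\gamma,\bar\alpha)$ for the SDP relaxation, which by weak duality certifies that the SDP value $p^* := \inf\{\ip{M_\obj, Z} : Z \in \cS(\cM),\ \ip{e_{n+1}e_{n+1}^\top, Z} = 1\}$ is bounded below (by $\bar\alpha$). Applying \cref{lem:SDPtightness} with $B = e_{n+1}e_{n+1}^\top$, the ROG assumption on $\cS(\cM)$ then yields that the SDP value equals the QCQP value, so $p^* \geq \alpha$.

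Next, I would invoke SDP strong duality with attainment of the dual optimum. The Slater point $\bar x$ produces a strictly feasible primal SDP point: the matrix $Z_\epsilon = \begin{pmatrix}\bar x \bar x^\top + \epsilon I_n & \bar x \\ \bar x^\top & 1\end{pmatrix}$ is positive definite (via the Schur complement $\epsilon I_n \succ 0$) and satisfies $\ip{M_i, Z_\epsilon} = q_i(\bar x) + \epsilon\,\tr(A_i) < 0$ for all $i$ when $\epsilon > 0$ is small enough. Combined with dual feasibility, this guarantees that the SDP dual $\sup\{\alpha' : \exists \gamma' \in \R^m_+,\ M_\obj + \sum_i \gamma'_i M_i - \alpha' e_{n+1}e_{n+1}^\top \succeq 0\}$ attains its supremum at some $(\gamma^*, \alpha^*)$ with $\alpha^* = p^* \geq \alpha$. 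Setting $\gamma := \gamma^*$, the identity $M_\obj + \sum_i \gamma_i M_i - \alpha\, e_{n+1}e_{n+1}^\top = \bigl(M_\obj + \sum_i \gamma_i M_i - \alpha^* e_{n+1}e_{n+1}^\top\bigr) + (\alpha^* - \alpha)\, e_{n+1}e_{n+1}^\top$ displays the left-hand side as a sum of two PSD matrices, hence as PSD. Evaluating the associated quadratic form at $(x,1)^\top$ then yields $q_\obj(x) + \sum_{i=1}^m \gamma_i q_i(x) \geq \alpha$ for every $x \in \R^n$.

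The main technical hurdle is ensuring that the SDP dual supremum is \emph{attained} rather than merely approached, so that one can extract a single $\gamma$ rather than a sequence $(\gamma^{(k)}, \alpha^{(k)})$ with $\alpha^{(k)} \to p^*$. Primal strict feasibility built from $\bar x$ via the Schur-complement perturbation above is what unlocks this; once dual attainment is in hand, the remainder of the argument reduces to a routine matrix identity. The ROG hypothesis enters only through \cref{lem:SDPtightness}, to close the gap between the QCQP and SDP values---precisely the step that would fail without it.
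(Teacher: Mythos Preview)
Your proposal is correct and follows essentially the same approach the paper indicates: the paper states that the corollary follows ``as in \cref{cor:rog_slemma_hom}'' by combining \cref{lem:SDPtightness} (to equate the QCQP and SDP values, using the ROG hypothesis and the dual-feasibility assumption for boundedness) with SDP strong duality (using the Slater point for primal strict feasibility and hence dual attainment). Your explicit Schur-complement perturbation $Z_\epsilon$ and the closing PSD-sum identity are exactly the details one fills in when unpacking the paper's one-line pointer.
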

In words, under a Slater condition and dual feasibility assumption, the inhomogeneous  S-lemma variant for $\cM$ and an arbitrary $M_\obj$ holds whenever $\cS(\cM)$ is ROG.

\subsubsection{Convex hulls of quadratically constrained sets}

The following proposition (see \cite[Proposition 6]{argue2020necessary}) states that the ROG property of $\cS(\cM)$ guarantees that the closed convex hull of the epigraph of a QCQP with constraints defined by $\cM$ is given by its SDP relaxation.
As before, we will make use of a definiteness assumption.

\begin{proposition}
\label{prop:epi_conv_hull_bounded_quadratic_set}
Let $M_\obj\in\S^{n+1}$ and $\cM\subseteq\S^{n+1}$.
Suppose there exists $M^* = \begin{smallpmatrix}
	A^* & b^*\\b^{*\top} & c^*
\end{smallpmatrix}\in\clcone\left(\set{M_\obj}\cup\cM\right)$ such that $A^*\succ 0$.
If $\cS(\cM)$ is ROG, then 
$\clconv(\cD)= \cD_\textup{SDP}$.
\end{proposition}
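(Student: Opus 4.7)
The plan is to establish the nontrivial inclusion $\cD_\textup{SDP}\subseteq\conv(\cD)\subseteq\clconv(\cD)$; the reverse containment $\cD\subseteq\cD_\textup{SDP}$ holds by construction and convexity. I will start by fixing $(\hat x,\hat t)\in\cD_\textup{SDP}$ with certificate $Z=\begin{smallpmatrix}X & \hat x\\ \hat x^\top & 1\end{smallpmatrix}\in\cS(\cM)$ satisfying $\ip{M_\obj,Z}\le\hat t$.

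The first step is to apply the ROG hypothesis (together with Carathéodory's theorem) to decompose $Z=\sum_{k=1}^K\lambda_k z_kz_k^\top$ as a finite conic combination with each $z_kz_k^\top\in\cS(\cM)$. Writing $z_k=(y_k,s_k)\in\R^n\times\R$, I split the indices into $I\coloneqq\{k:s_k\ne 0\}$ and $J\coloneqq\{k:s_k=0\}$. For $k\in I$, absorbing $s_k^2$ into $\lambda_k$ lets me assume $z_k=(x_k,1)$, after which reading off $Z_{n+1,n+1}=1$ yields $\sum_{k\in I}\lambda_k=1$ and the last column yields $\hat x=\sum_{k\in I}\lambda_k x_k$. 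Since $z_kz_k^\top\in\cS(\cM)$ with $s_k=1$ translates into $\ip{M,z_kz_k^\top}\le 0$ for every $M\in\cM$, this gives $x_k\in\cX$ and therefore $(x_k,q_\obj(x_k))\in\cD$.

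The main obstacle is handling the ``recession-type'' summands $k\in J$: a priori $y_k^\top A_\obj y_k$ could be negative and destroy the eventual bound on $\hat t$. This is precisely what the hypothesis on $M^*$ is designed to preclude. Writing $M^*=\lim_n M^{(n)}$ with $M^{(n)}=\alpha_0^{(n)}M_\obj+\sum_{M\in\cM_I}\alpha_M^{(n)}M+\sum_{M\in\cM_E}\gamma_M^{(n)}M$ where $\alpha_0^{(n)},\alpha_M^{(n)}\ge 0$ and $\gamma_M^{(n)}\in\R$, the inclusions $z_kz_k^\top\in\cS(\cM)$ with $s_k=0$ give $y_k^\top A_M y_k\le 0$ for $M\in\cM_I$ and $y_k^\top A_M y_k=0$ for $M\in\cM_E$, so $y_k^\top A^{(n)}y_k\le \alpha_0^{(n)}y_k^\top A_\obj y_k$ for every $n$. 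Passing to the limit and using $A^*\succ 0$, the left-hand side tends to $y_k^\top A^* y_k>0$ whenever $y_k\ne 0$; combined with $\alpha_0^{(n)}\ge 0$ this forces $y_k^\top A_\obj y_k>0$. In particular $\sum_{k\in J}\lambda_k y_k^\top A_\obj y_k\ge 0$.

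To conclude, I assemble the decomposition. From $\hat t\ge\ip{M_\obj,Z}=\sum_{k\in I}\lambda_k q_\obj(x_k)+\sum_{k\in J}\lambda_k y_k^\top A_\obj y_k\ge\sum_{k\in I}\lambda_k q_\obj(x_k)$, I set the slack $s\coloneqq\hat t-\sum_{k\in I}\lambda_k q_\obj(x_k)\ge 0$ and define $t_k\coloneqq q_\obj(x_k)+s$ for each $k\in I$. Since $\cD$ is an epigraph, each $(x_k,t_k)\in\cD$, and $\sum_{k\in I}\lambda_k(x_k,t_k)=(\hat x,\hat t)$, which exhibits $(\hat x,\hat t)\in\conv(\cD)\subseteq\clconv(\cD)$, as required.
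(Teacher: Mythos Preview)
Your argument is correct and follows what is almost certainly the intended route (the paper itself defers the proof to \cite{argue2020necessary}): decompose the certificate $Z$ via the ROG hypothesis into rank-one elements of $\cS(\cM)$, split by whether the last coordinate vanishes, and use the definiteness of $A^*$ to show the recession-type summands $k\in J$ satisfy $y_k^\top A_\obj y_k\geq 0$. The limiting argument through $M^{(n)}$ is the right way to squeeze information out of $M^*\in\clcone(\{M_\obj\}\cup\cM)$.

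One omission worth flagging: you in fact prove the \emph{stronger} inclusion $\cD_\textup{SDP}\subseteq\conv(\cD)$, hence $\cD_\textup{SDP}=\conv(\cD)$. To obtain the stated identity $\clconv(\cD)=\cD_\textup{SDP}$ you still need $\cD_\textup{SDP}$ to be closed; your sentence ``the reverse containment $\cD\subseteq\cD_\textup{SDP}$ holds by construction and convexity'' only yields $\conv(\cD)\subseteq\cD_\textup{SDP}$, not $\clconv(\cD)\subseteq\cD_\textup{SDP}$. Closedness does follow from the same definiteness hypothesis by a fiber-compactness argument: fix one $M^{(n_0)}\in\cone(\{M_\obj\}\cup\cM)$ (a finite conic combination) close enough to $M^*$ that its top-left block satisfies $A^{(n_0)}\succeq\delta I$ for some $\delta>0$. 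For any certificate $Z=\begin{smallpmatrix}X&\hat x\\\hat x^\top&1\end{smallpmatrix}$ with $\ip{M_\obj,Z}\le\hat t$, aggregating the finitely many constituents of $M^{(n_0)}$ bounds $\ip{A^{(n_0)},X}$, and hence $\tr(X)$, by a quantity depending only on $(\hat x,\hat t)$. Thus the fibers of the projection defining $\cD_\textup{SDP}$ are bounded, and the image is closed.
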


Note that when $\cM$ is finite, the assumption that $M^*$ exists is equivalent to \cref{as:definiteness}.
We can further relax this assumption by applying a perturbation argument to arrive at the following result (see \cite[Corollary 6]{argue2020necessary}). 
\begin{corollary}
  \label{cor:epi_conv_hull_bounded_quadratic_set_psd}
Let $M_\obj\in\S^{n+1}$ and $\cM\subseteq\S^{n+1}$.
Suppose there exists $M^* = \begin{smallpmatrix}
	A^* & b^*\\b^{*\top} & c^*
\end{smallpmatrix}\in\clcone\left(\set{M_\obj}\cup\cM\right)$ such that $A^*\succeq 0$.
If $\cS(\cM)$ is ROG, then 
$\clconv(\cD)=\cl( \cD_\textup{SDP})$.
\end{corollary}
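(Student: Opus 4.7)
The approach is a perturbation argument that reduces the corollary to \cref{prop:epi_conv_hull_bounded_quadratic_set}. The key idea is to perturb the objective matrix so that the relaxed hypothesis $A^*\succeq 0$ becomes strict in the perturbed problem, apply the proposition to each perturbation, and then pass to the limit as the perturbation vanishes.

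Concretely, set $E \coloneqq \begin{smallpmatrix} I_n & 0 \\ 0 & 0\end{smallpmatrix}\in\S^{n+1}$ and, for each $\epsilon>0$, define $M_\obj^\epsilon \coloneqq M_\obj + \epsilon E$ (equivalently, replace $q_\obj(x)$ by $q_\obj(x)+\epsilon\|x\|_2^2$). Let $\cD^\epsilon$ and $\cD_\textup{SDP}^\epsilon$ denote the resulting epigraph and projected SDP relaxation; note that $\cM$, and hence $\cS(\cM)$, is unchanged and still ROG. The plan is to check that the strict-definiteness hypothesis of \cref{prop:epi_conv_hull_bounded_quadratic_set} holds for the perturbed data. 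After rescaling (which preserves $A^*\succeq 0$), I may assume $M^* = M_\obj + N$ with $N\in\clcone(\cM)$; then
\[
M^*_\epsilon \coloneqq M^* + \epsilon E \;=\; M_\obj^\epsilon + N \;\in\; \clcone(\{M_\obj^\epsilon\}\cup\cM),
\]
with upper-left block $A^* + \epsilon I_n \succ 0$. \cref{prop:epi_conv_hull_bounded_quadratic_set} then yields $\clconv(\cD^\epsilon) = \cD_\textup{SDP}^\epsilon$ for every $\epsilon>0$.

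Next I pass to the limit $\epsilon \downarrow 0$. The family $\cD^\epsilon$ is monotone in $\epsilon$ (since $q_\obj^\epsilon$ increases with $\epsilon$), its union is dense in the closed set $\cD$, and hence $\clconv(\cD) = \cl\bigl(\bigcup_{\epsilon>0} \clconv(\cD^\epsilon)\bigr)$. On the SDP side, for any $Z = \begin{smallpmatrix}X & x\\ x^\top & 1\end{smallpmatrix}\succeq 0$, the identity $\ip{M_\obj^\epsilon,Z} = \ip{M_\obj,Z} + \epsilon\tr(X)$ together with $\tr(X)\geq 0$ shows $\cD_\textup{SDP}^\epsilon\subseteq\cD_\textup{SDP}$; conversely, for any $(x,t)\in\cD_\textup{SDP}$ with witness $Z$ one has $(x,t+\epsilon\tr(X))\in\cD_\textup{SDP}^\epsilon$, so $\cl\bigl(\bigcup_{\epsilon>0}\cD_\textup{SDP}^\epsilon\bigr)=\cl(\cD_\textup{SDP})$. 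Combining with the previous display yields $\clconv(\cD) = \cl(\cD_\textup{SDP})$.

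The main obstacle is the reduction in the first step to the form $M^* = M_\obj + N$ with $N\in\clcone(\cM)$. When the conic coefficient of $M_\obj$ in $M^*$ is strictly positive, a simple rescaling works; however, in the degenerate subcase where $M^*\in\clcone(\cM)$ the natural candidate $M^*_\epsilon = M^*+\epsilon E$ no longer belongs to the perturbed cone. Handling this cleanly requires producing an alternate certificate for the perturbed problem—for example, using $M^{**}_\epsilon \coloneqq \lambda M_\obj^\epsilon + M^*$ and arguing that, for a suitable scaling $\lambda=\lambda(\epsilon)$, the upper-left block $\lambda(A_\obj+\epsilon I_n) + A^*$ becomes positive definite, possibly after an initial reduction that exploits consequences of $M^*\in\clcone(\cM)$ together with $A^*\succeq 0$ for the feasible set $\cX$. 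This is the delicate bookkeeping step of the argument; the remaining limit computations are routine once the hypothesis of \cref{prop:epi_conv_hull_bounded_quadratic_set} has been secured uniformly in $\epsilon$.
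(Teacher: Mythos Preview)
Your overall strategy—perturb $M_\obj$ by $\epsilon E$, apply \cref{prop:epi_conv_hull_bounded_quadratic_set} to each perturbed problem, then pass to the limit—matches the paper's stated ``perturbation argument,'' and your limit computations on both the $\cD$ side and the $\cD_\textup{SDP}$ side are correct. The non-degenerate case (where $M^*$ carries a positive $M_\obj$ component) is also handled correctly. The problem is the degenerate case, which is not merely ``delicate bookkeeping'': your proposed certificate $M^{**}_\epsilon=\lambda M_\obj^\epsilon+M^*$ can fail outright.

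Take $n=1$, $\cM=\{M_1\}$ with $M_1=\left(\begin{smallmatrix}0&1\\1&0\end{smallmatrix}\right)$, and $M_\obj=\left(\begin{smallmatrix}-1&0\\0&0\end{smallmatrix}\right)$. Then $\cS(\cM)$ is ROG (single LMI) and $M^*\coloneqq M_1\in\clcone(\cM)$ has $A^*=0\succeq 0$, so the corollary's hypotheses hold; one checks directly that its conclusion holds as well ($\clconv(\cD)=\cl(\cD_\textup{SDP})=\{(x,t):x\le 0\}$). However, for every $\epsilon\in(0,1)$ the cone $\clcone(\{M_\obj^\epsilon\}\cup\cM)=\R_+M_\obj^\epsilon+\R_+M_1$ consists of matrices whose top-left entry equals $\alpha(\epsilon-1)\le 0$ for some $\alpha\ge 0$. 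Thus \emph{no} element of this cone has positive definite top-left block, and the hypothesis of \cref{prop:epi_conv_hull_bounded_quadratic_set} fails for every small $\epsilon$. In particular, no choice of $\lambda=\lambda(\epsilon)$ makes $\lambda(A_\obj+\epsilon I_n)+A^*$ positive definite here. Since your limiting step requires the proposition for arbitrarily small $\epsilon$ (by monotonicity, results only for large $\epsilon$ recover nothing about $\cD$), this is a genuine gap, not a bookkeeping issue: the degenerate case needs a different certificate or a separate direct argument rather than the reduction you sketch.
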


The following example recovers \cite[Theorem 4]{wang2020generalized} (see also \cref{ex:gtrs}) as an immediate application of \cref{cor:epi_conv_hull_bounded_quadratic_set_psd}.
\begin{example}[GTRS]
Recall that the GTRS asks us to minimize a quadratic objective function $q_\obj(x)$ subject to a single quadratic inequality constraint $q_1(x) \leq 0$.
Here, we assume that $q_\obj,q_1$ are such that there exists $\gamma^*\geq 0$ such that $A_\obj + \gamma^* A_1\succeq 0$. 
It is well known (see for example \cite{sturm2003cones}) that the cone $\cS(\set{M_1})$ is ROG. We then deduce by \cref{cor:epi_conv_hull_bounded_quadratic_set_psd} that
\begin{align*}
\clconv\set{(x,t)\in\R^n\times\R:\, \begin{array}
	{l}
	q_\obj(x) \leq t\\
	q_1(x) \leq 0
\end{array}} &=
\cl\set{(x,t)\in\R^n\times \R:\, \begin{array}
	{l}
	\exists X\succeq xx^\top\,:\\
	\ip{A_\obj, X} + 2\ip{b_\obj, x} + c_\obj \leq t\\
	\ip{A_1, X} + 2\ip{b_1, x} + c_1 \leq 0
\end{array}}.\qedhere
\end{align*}
\end{example}

We close this subsection by demonstrating that the ROG property is (unsurprisingly) strictly stronger than convex hull exactness.
\begin{remark}
\label{rem:ROG_stronger_than_ch_exactness}
Consider the following QCQP
\begin{align*}
\inf_{x\in\R^2}\set{\norm{x}^2:\, \begin{array}
	{l}
	x^\top \left(\begin{smallmatrix}
	-1 &\\
	& 1
\end{smallmatrix}\right)x - 1\leq 0\\
x^\top \left(\begin{smallmatrix}
	2 &\\
	& -1
\end{smallmatrix}\right)x - 1\leq 0
\end{array}}.
\end{align*}
The corresponding set $\cM$ for this example is $\cM = \set{\Diag(-1,1,-1),\Diag(2,-1,-1)}$.
We will soon see in \cref{thm:two_LMI_NS} that $\cS(\cM)$ is not ROG so that \cref{prop:epi_conv_hull_bounded_quadratic_set} cannot be applied to this example.
On the other hand, the set of convex Lagrange multipliers $\Gamma$ for this QCQP is polyhedral so that we may apply the analysis of \cref{ex:qmp} to deduce that convex hull exactness holds.
\qedhere
\end{remark}

\subsection{Relating LMIs to LMEs}\label{sub:relating_SM_and_TM}\label{sec:ROG_LMIs_to_LMEs}

In this section, we will present a series of lemmas relating the ROG property of $\cS(\cM)$ to that of
\begin{align*}
\cT(\cM') \coloneqq \set{Z\in\S^{n+1}_+:\, \ip{M,Z} = 0,\,\forall M\in\cM'}
\end{align*}
for $\cM'\subseteq \cM$. Note that in contrast to $\cS(\cM)$, the definition of $\cT(\cM')$ uses linear matrix \emph{equalities} (LMEs).
These results will be particularly useful for analyzing spectrahedral sets defined by finitely many LMIs/LMEs. 

\begin{remark}\label{rem:TM_finite_SM_infinite}
For any $\cM\subseteq\S^{n+1}$, we have
$\cS(\cM) = \cS(\clcone(\cM))$ and $\cT(\cM) = \cT(\spann(\cM))$. Consequently, we may without loss of generality assume that $\cM$ is finite when analyzing sets of the form $\cT(\cM)$---simply replace $\cM$ with a finite basis of $\spann(\cM)$. On the other hand, $\clcone(\cM)$  is not necessarily finitely generated.\qedhere
\end{remark}

The following result from \cite[Lemma 3]{argue2020necessary} relates the facial structure of $\cS(\cM)$ to the ROG property. This result is analogous to the statement that a polytope is integral if and only if each of its faces is integral.
\begin{lemma}
\label{lem:rog_iff_faces_rog}
For any set $\cM\subseteq \S^{n+1}$, the following are equivalent:
\begin{enumerate}
	\item $\cS(\cM)$ is ROG.
	\item Every face of $\cS(\cM)$ is ROG.
	\item $\cS(\cM)\cap \cT(\cM')$ is ROG for every $\cM'\subseteq\cM$.
\end{enumerate}
\end{lemma}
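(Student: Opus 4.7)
The plan is to establish the equivalence via the cycle $(1) \Longrightarrow (2) \Longrightarrow (3) \Longrightarrow (1)$. The implication $(3) \Longrightarrow (1)$ is immediate by setting $\cM' = \emptyset$: then $\cT(\emptyset) = \S^{n+1}_+$ and $\cS(\cM) \cap \cT(\emptyset) = \cS(\cM)$, which is ROG by (3).

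For $(2) \Longrightarrow (3)$, I would show that $\cS(\cM) \cap \cT(\cM')$ is always a face of $\cS(\cM)$ whenever $\cM' \subseteq \cM$, after which (2) immediately yields the conclusion. Fix $M \in \cM'$. Since $M \in \cM$, every $Z \in \cS(\cM)$ satisfies $\ip{M, Z} \leq 0$, and $0 \in \cS(\cM)$ attains the bound $0$. Hence the linear functional $\ip{M, \cdot}$ attains its maximum on $\cS(\cM)$ precisely on the exposed face $\{Z \in \cS(\cM) : \ip{M, Z} = 0\}$. Intersecting these exposed faces over all $M \in \cM'$ produces $\cS(\cM) \cap \cT(\cM')$; as an intersection of faces, this set is itself a face of $\cS(\cM)$, and (2) asserts that it is ROG.

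The substantive step is $(1) \Longrightarrow (2)$, which I would handle using \cref{lem:rog_iff_extreme_rank_one}. Assume $\cS(\cM)$ is ROG, so \cref{lem:rog_iff_extreme_rank_one} tells us that every extreme ray of $\cS(\cM)$ has rank one. Let $F \trianglelefteq \cS(\cM)$; then $F$ is a closed convex cone contained in $\S^{n+1}_+$. I would first verify the standard property that every extreme ray of $F$ is also an extreme ray of $\cS(\cM)$: if $\R_+ Z$ is extreme in $F$ and $Z = (X + Y)/2$ with $X, Y \in \cS(\cM)$, then the face property of $F$ forces $X, Y \in F$, and extremality of $\R_+ Z$ inside $F$ then forces $X, Y \in \R_+ Z$. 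Thus every extreme ray of $F$ has rank one, and applying \cref{lem:rog_iff_extreme_rank_one} in the reverse direction concludes that $F$ is ROG.

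I expect no serious obstacle. The argument is a clean application of the rank-one characterization of ROG cones in \cref{lem:rog_iff_extreme_rank_one}, together with the observation that the exposed faces of $\cS(\cM)$ are precisely the tight-constraint sets coming from $\cM$. The only minor bookkeeping is to remember that an intersection of exposed faces remains a face (even though it need not itself be exposed), which is standard.
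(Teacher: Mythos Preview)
Your proposal is correct. The tutorial does not actually reproduce a proof of this lemma---it merely cites \cite[Lemma~3]{argue2020necessary}---so there is no in-paper argument to compare against, but the cycle $(1)\Rightarrow(2)\Rightarrow(3)\Rightarrow(1)$ you outline is the standard route and each step is sound.

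One small remark on $(1)\Rightarrow(2)$: your argument via \cref{lem:rog_iff_extreme_rank_one} is fine, but it asks you to verify that a face $F$ of $\cS(\cM)$ is itself a closed convex cone before you can invoke the lemma in the reverse direction. You assert this without comment. An alternative that sidesteps the issue entirely is the direct decomposition argument: for $Z\in F$, the ROG property of $\cS(\cM)$ gives a finite convex combination $Z=\sum_i \lambda_i z_iz_i^\top$ with each $z_iz_i^\top\in\cS(\cM)$, and the face property of $F$ forces every $z_iz_i^\top$ into $F$; hence $F=\conv(F\cap\{zz^\top\})$. This is marginally cleaner and does not require appealing to extreme rays at all.
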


We have the following immediate corollary of \cref{lem:rog_iff_faces_rog};  see \cite[Corollary 1]{argue2020necessary}.
\begin{corollary}
\label{cor:SM_rog_then_TM_rog}
For any set $\cM\subseteq \S^{n+1}$, if $\cS(\cM)$ is ROG then $\cT(\cM)$ is ROG.
\end{corollary}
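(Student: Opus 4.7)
The plan is to derive this directly from the third equivalent form in the preceding \cref{lem:rog_iff_faces_rog}, which states that if $\cS(\cM)$ is ROG, then $\cS(\cM)\cap \cT(\cM')$ is ROG for every $\cM'\subseteq\cM$. The key observation is that taking $\cM'=\cM$ already yields $\cT(\cM)$ up to intersection with $\cS(\cM)$, and that this intersection is redundant.

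More precisely, I would first verify the set-theoretic inclusion $\cT(\cM)\subseteq \cS(\cM)$. This is immediate from the definitions: if $Z\in\S^{n+1}_+$ satisfies $\langle M,Z\rangle = 0$ for every $M\in\cM$, then in particular $\langle M,Z\rangle \leq 0$ for every $M\in\cM$, so $Z\in\cS(\cM)$. Consequently,
\begin{align*}
\cS(\cM)\cap \cT(\cM) = \cT(\cM).
\end{align*}

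With this inclusion in hand, the proof concludes by invoking \cref{lem:rog_iff_faces_rog} with the choice $\cM'=\cM\subseteq\cM$: since $\cS(\cM)$ is ROG by hypothesis, the lemma guarantees that $\cS(\cM)\cap \cT(\cM)$ is ROG, and the identification above then gives that $\cT(\cM)$ itself is ROG.

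There is essentially no obstacle to overcome here, as the corollary follows from an immediate specialization of \cref{lem:rog_iff_faces_rog}; the only content is the trivial verification that $\cT(\cM)\subseteq \cS(\cM)$. I note that this proof does not require $\cM$ to be finite, which is consistent with \cref{rem:TM_finite_SM_infinite}: even if $\cM$ is infinite, one may work directly with $\cT(\cM)$ without passing to a finite basis of $\spann(\cM)$, since the argument only uses the inclusion of feasible sets and a single application of the preceding lemma.
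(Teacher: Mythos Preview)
Your argument is correct and matches the paper's approach: the corollary is stated as an immediate consequence of \cref{lem:rog_iff_faces_rog}, and your specialization $\cM'=\cM$ together with the trivial inclusion $\cT(\cM)\subseteq\cS(\cM)$ is exactly the intended one-line derivation.
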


We can strengthen \cref{lem:rog_iff_faces_rog} in a few ways (see \cite[Lemmas 5 and 6]{argue2020necessary}).

\begin{lemma}
\label{lem:rog_iff_nontrivial_faces_rog}
Let $\cM\subseteq \S^{n+1}$ be compact. Then, $\cS(\cM)$ is ROG if and only if $\cS(\cM)\cap\cT(\cM')$ is ROG for every $\emptyset\neq \cM'\subseteq\cM$.
\end{lemma}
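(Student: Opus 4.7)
The plan is to prove the two directions separately. The forward direction is immediate: if $\cS(\cM)$ is ROG then by \cref{lem:rog_iff_faces_rog} every $\cS(\cM)\cap \cT(\cM')$ with $\cM'\subseteq\cM$ (in particular nonempty $\cM'$) is ROG. The whole content is in the converse, and I would base the argument on \cref{lem:rog_iff_extreme_rank_one}: to show $\cS(\cM)$ is ROG it suffices to show that every extreme ray $\R_+ Z$ of $\cS(\cM)$ satisfies $\rank(Z)=1$. Fix such an extreme ray.

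The natural case split is according to whether some inequality is active at $Z$. If there exists $M\in\cM$ with $\ip{M,Z}=0$, then $Z\in\cS(\cM)\cap\cT(\{M\})$; this intersection is a face of $\cS(\cM)$, so $\R_+ Z$ is also an extreme ray of the smaller cone. Taking $\cM'=\{M\}$ (nonempty!) in the hypothesis shows $\cS(\cM)\cap\cT(\{M\})$ is ROG, so $\rank(Z)=1$ by \cref{lem:rog_iff_extreme_rank_one} again. This case is essentially free.

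The remaining, and harder, case is when $\ip{M,Z}<0$ for every $M\in\cM$, because then no hypothesis with nonempty $\cM'$ is directly applicable. Here I would use compactness of $\cM$ twice. First, continuity of $M\mapsto\ip{M,Z}$ on the compact set $\cM$ yields a uniform strict slack $\delta>0$ with $\ip{M,Z}\leq -\delta$ for all $M\in\cM$. Second, for any fixed direction $H\in\S^{n+1}$, the number $K\coloneqq \max_{M\in\cM}|\ip{M,H}|$ is finite. Now suppose for contradiction $\rank(Z)\geq 2$. Using the spectral decomposition of $Z$ I would split $Z=X+Y$ with $X,Y\succeq 0$ nonzero and $X\notin\R_+ Z$, then perturb by $Z_\pm\coloneqq Z\pm\epsilon(X-Y)$ with $H=X-Y$ and $0<\epsilon\leq 1$ chosen small enough that $\epsilon K<\delta$. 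For such $\epsilon$, both $Z_\pm\succeq 0$ (since $Z_+=(1+\epsilon)X+(1-\epsilon)Y$, and similarly for $Z_-$) and $\ip{M,Z_\pm}\leq -\delta+\epsilon K<0$ on all of $\cM$, so $Z_\pm\in\cS(\cM)$. Extremality of $\R_+ Z$ then forces $X-Y$ to be a scalar multiple of $Z=X+Y$, and a routine check on the two linear equations shows this forces $X\in\R_+Z$, contradicting the choice of $X$.

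The main obstacle is exactly this second case: compactness is what lets us convert the pointwise strict inequalities into a uniform quantitative gap, which in turn enables the symmetric perturbation argument that ultimately detects non-extremality. Without compactness one could easily imagine sequences $M_k\in\cM$ with $\ip{M_k,Z}\to 0$ along which no fixed $\epsilon$ keeps $Z_\pm$ feasible, so the lemma would genuinely fail; this is why the statement strengthens \cref{lem:rog_iff_faces_rog} precisely under the compactness assumption.
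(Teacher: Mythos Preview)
Your proof is correct. The paper does not give its own proof of this lemma but defers to \cite[Lemma 5]{argue2020necessary}; your argument---handling extreme rays with an active constraint via \cref{lem:rog_iff_faces_rog} and the hypothesis with a singleton $\cM'$, and extreme rays with all constraints strictly slack via a uniform-gap-plus-perturbation argument exploiting compactness---is exactly the natural route and matches how the result is established in the cited reference.
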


\begin{lemma}
\label{lem:TMprime_rog_then_SM_rog}
Let $\cM\subseteq \S^{n+1}$ be finite.
If $\cT(\cM')$ is ROG for every $\cM'\subseteq \cM$, then $\cS(\cM)$ is ROG.
\end{lemma}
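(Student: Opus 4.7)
The plan is to reduce the question of whether $\cS(\cM)$ is ROG to a statement about the extreme rays of the LME-defined cones $\cT(\cM')$ already assumed to be ROG. By \cref{lem:rog_iff_extreme_rank_one}, it suffices to show that every extreme ray $\R_+ Z$ of $\cS(\cM)$ satisfies $\rank(Z) = 1$. So fix such an extreme ray and partition $\cM$ into the active and inactive parts at $Z$:
\begin{align*}
\cM_0 \coloneqq \set{M\in\cM:\, \ip{M,Z}=0}, \qquad \cM_+ \coloneqq \cM\setminus \cM_0.
\end{align*}
Clearly $Z\in\cT(\cM_0)$, so by hypothesis $\cT(\cM_0)$ is ROG, and again by \cref{lem:rog_iff_extreme_rank_one} it is enough to prove that $\R_+ Z$ is in fact an extreme ray of $\cT(\cM_0)$.

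The crux is a short perturbation argument leveraging the finiteness of $\cM$. Suppose $Z = \tfrac{1}{2}(X+Y)$ with $X,Y\in\cT(\cM_0)$. For $\epsilon\in[0,1]$ define the convex combinations
\begin{align*}
X_\epsilon \coloneqq (1-\epsilon)Z + \epsilon X, \qquad Y_\epsilon \coloneqq (1-\epsilon)Z + \epsilon Y.
\end{align*}
Both lie in $\S^{n+1}_+$ and both annihilate every $M\in\cM_0$, so they lie in $\cT(\cM_0)$. For $M\in\cM_+$ we have $\ip{M,Z}<0$, so
\begin{align*}
\ip{M,X_\epsilon} = (1-\epsilon)\ip{M,Z} + \epsilon\ip{M,X}
\end{align*}
is strictly negative for all sufficiently small $\epsilon>0$, and similarly for $Y_\epsilon$. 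Since $\cM_+$ is \emph{finite}, a single $\epsilon>0$ works uniformly across $\cM_+$, giving $X_\epsilon, Y_\epsilon \in \cS(\cM)$. A direct computation shows $Z = \tfrac12(X_\epsilon+Y_\epsilon)$, so extremality of $\R_+ Z$ in $\cS(\cM)$ forces $X_\epsilon, Y_\epsilon \in \R_+ Z$. Unwinding the definition of $X_\epsilon$ gives $X \in \R Z$, and combined with $X,Z\in\S^{n+1}_+\setminus\set{0}$ (or $X=0$) this yields $X\in\R_+Z$; the same argument handles $Y$. Hence $\R_+ Z$ is an extreme ray of $\cT(\cM_0)$.

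Applying \cref{lem:rog_iff_extreme_rank_one} to the ROG cone $\cT(\cM_0)$ then gives $\rank(Z)=1$, which completes the proof. The only nontrivial step is the perturbation argument establishing extremality in $\cT(\cM_0)$, and the single place where finiteness of $\cM$ enters is to guarantee that a uniform $\epsilon>0$ keeps all strict inactive inequalities satisfied simultaneously. This is precisely what a compactness hypothesis on $\cM$ would also deliver (cf.\ \cref{lem:rog_iff_nontrivial_faces_rog}), and it suggests a natural route to extending the statement beyond the finite case.
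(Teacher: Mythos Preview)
Your proof is correct. The paper itself does not give a proof of this lemma in the text; it simply records the statement and attributes it to \cite[Lemma 6]{argue2020necessary}. Your argument---show that every extreme ray of $\cS(\cM)$ is already an extreme ray of $\cT(\cM_0)$ for $\cM_0$ the active set, via a perturbation that pushes a candidate decomposition back into $\cS(\cM)$---is the natural and standard route, and your observation that finiteness is only used to pick a uniform $\epsilon$ (and could be replaced by compactness) is exactly right.
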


We next note \cite[Lemma 7]{argue2020necessary} that the ROG property of $\cT(\cM)$ is equivalent to
the ROG property of $\cT(\overline\cM)$ where $\overline\cM$ is the restriction
of $\cM$ onto the joint range of the matrices $M\in\cM$.
\begin{lemma}
\label{lem:TM_rog_iff_overline_TM_rog}
Let $W \coloneqq \spann\left(\bigcup_{M\in\cM} \range(M)\right)$. For $M\in\cM$, let $\overline M= M_W$ denote the restriction of $M$ to $W$. 
Let $\overline\cM = \set{\overline M:\, M\in\cM}$.
Then, $\cT(\cM)$ is ROG if and only if $\cT(\overline\cM)$ is ROG.
\end{lemma}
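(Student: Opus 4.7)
The plan is to reduce everything to a block decomposition of $\R^{n+1} = W \oplus W^\perp$. Fix an orthonormal basis adapted to this splitting. For any $M \in \cM$, since $\range(M) \subseteq W$ and $M$ is symmetric, $\ker(M) \supseteq W^\perp$, so in this basis $M = \overline M \oplus 0$. Consequently, for every $Z \in \S^{n+1}$, $\ip{M,Z} = \ip{\overline M, Z_{WW}}$, which means
\[
\cT(\cM) \;=\; \{Z \in \S^{n+1}_+ : Z_{WW} \in \cT(\overline\cM)\}.
\]
This identification is the workhorse of both directions.

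For the $(\Rightarrow)$ direction, given any $A \in \cT(\overline\cM)$, I would lift it to $Z := A \oplus 0 \in \S^{n+1}_+$. By the identification above, $Z \in \cT(\cM)$, so the ROG property of $\cT(\cM)$ provides a decomposition $Z = \sum_i z_i z_i^\top$ with each $z_i z_i^\top \in \cT(\cM)$. Reading off the $W^\perp W^\perp$ block yields $\sum_i (z_i)_{W^\perp}(z_i)_{W^\perp}^\top = 0$, and since each summand is PSD this forces $(z_i)_{W^\perp} = 0$, i.e., $z_i \in W$ for every $i$. Reading off the $W W$ block then gives $A = \sum_i z_i z_i^\top$ with each $z_i z_i^\top \in \cT(\overline\cM)$, as required.

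For the $(\Leftarrow)$ direction, the key is to upgrade a rank-one decomposition of $Z_{WW}$ to one of the whole matrix $Z$. Given $Z \in \cT(\cM)$, write in block form $Z = \begin{smallpmatrix} A & B \\ B^\top & C \end{smallpmatrix}$ with $A = Z_{WW} \in \cT(\overline\cM)$. Positive semidefiniteness of $Z$ gives $\range(B) \subseteq \range(A)$ and the Schur complement $S := C - B^\top A^\dagger B \succeq 0$; setting $K := A^\dagger B$, one has the identity
\[
Z \;=\; \begin{pmatrix} I \\ K^\top \end{pmatrix} A \begin{pmatrix} I & K \end{pmatrix} \;+\; \begin{pmatrix} 0 & 0 \\ 0 & S \end{pmatrix}.
\]
Apply ROG of $\cT(\overline\cM)$ to decompose $A = \sum_i w_i w_i^\top$ with $w_i w_i^\top \in \cT(\overline\cM)$, and take an ordinary spectral decomposition $S = \sum_j v_j v_j^\top$ inside $W^\perp$. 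Lifting to $\tilde w_i := (w_i, K^\top w_i)$ and $\tilde v_j := (0, v_j)$ in the $W \oplus W^\perp$ basis yields $Z = \sum_i \tilde w_i \tilde w_i^\top + \sum_j \tilde v_j \tilde v_j^\top$. Each $\tilde w_i \tilde w_i^\top$ has $W$-block $w_i w_i^\top \in \cT(\overline\cM)$ and each $\tilde v_j \tilde v_j^\top$ has zero $W$-block, so by the identification every summand lies in $\cT(\cM)$.

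The main obstacle is the $(\Leftarrow)$ step: an arbitrary rank-one decomposition of $Z_{WW}$ need not extend to a rank-one decomposition of $Z$, and what saves us is the Schur-complement structure $\range(Z_{WW^\perp}) \subseteq \range(Z_{WW})$, which lets us lift each $w_i$ through the same linear map $K^\top$. The $(\Rightarrow)$ direction, by contrast, is essentially free once one observes that embedding $A \mapsto A \oplus 0$ forces rank-one summands to live entirely in $W$.
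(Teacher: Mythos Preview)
Your proof is correct. The paper itself does not supply a proof of this lemma (it is stated as \cite[Lemma 7]{argue2020necessary} without argument), so there is nothing to compare against; your block-decomposition/Schur-complement approach is the natural one and all steps check out.
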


\begin{figure}
	\centering
		\begin{tikzcd}
		\boxed{\cM \text{ is finite and } \forall \cM'\subseteq \cM,\, \cT(\cM')\text{ ROG}} \arrow[r,Rightarrow]
		& \boxed{\cS(\cM)\text{ ROG}} \arrow[r,Rightarrow]
		& \boxed{\cT(\cM)\text{ ROG}}
		\end{tikzcd}
	\caption{A summary of \cref{lem:TMprime_rog_then_SM_rog,cor:SM_rog_then_TM_rog}}
	\label{fig:relating_S_and_T}
\end{figure}

Considering the definitions of $\cS(\cM)$ and $\cT(\cM)$, one may ask whether it is possible to ``lift'' the inequalities in $\cS(\cM)$ to equalities while preserving the ROG property.
More formally, given a finite set $\cM =\set{M_1,\dots,M_m}$ such that $\cS(\cM)$ is ROG, is the set $\cT(\overline\cM)\subseteq\S^{n+m}$ ROG? Here,
\begin{align*}
\overline\cM = \set{\overline M_1,\dots,\overline M_m}
\quad\text{and}\quad
\overline M_i = \begin{pmatrix}
	M_i &\\& -e_ie_i^\top
\end{pmatrix}.
\end{align*}
If this were possible, then there would be no need to study the ROG property of sets of the form $\cS(\cM)$.
The following example (\cite[Example 3]{argue2020necessary}) shows that this is not possible in general.
\begin{example}
\label{ex:lifting_rog_to_non_rog}
Consider the set
\begin{align*}
\cS \coloneqq \set{Z\in\S^3_+:\, \begin{array}
	{l}
	Z_{1,2} = 0\\
	Z_{1,3} \leq 0
\end{array}}.
\end{align*}
We will see soon (see \cref{thm:two_LMI_NS}) that this set is ROG.
We can replace the LMIs defining $\cS$ with LMEs in a lifted space as follows: Let $\Pi:\S^4\to\S^3$ denote the projection of a $4\by 4$ matrix onto its top-left $3\by 3$ principal submatrix. Then,
\begin{align*}
\cS = \Pi\left(\set{Z\in\S^4:\, \begin{array}
	{l}
	Z_{1,2} = 0\\
	Z_{1,3} + Z_{4,4} = 0
\end{array}}\right) = \Pi\left(\cT(\set{\overline M_1,\overline M_2})\right),
\end{align*}
where
\begin{align*}
\overline M_1 \coloneqq \begin{pmatrix}
	0 & 1/2 & 0 & 0\\
	1/2 & 0 & 0 & 0\\
	0 & 0 & 0 & 0\\
	0 & 0 & 0 & 0
\end{pmatrix}
\qquad\text{and}\qquad
\overline M_2 \coloneqq \begin{pmatrix}
	0 & 0 & 1/2 & 0\\
	0 & 0 & 0 & 0\\
	1/2 & 0 & 0 & 0\\
	0 & 0 & 0 & 1
\end{pmatrix}.
\end{align*}

Define $\overline\cM\coloneqq\set{\overline M_1,\overline M_2}$. \cref{thm:two_LMI_NS} implies that $\cT(\overline\cM)$ is not ROG. We conclude that the obvious lifting of LMIs into LMEs can take ROG sets $\cS(\cM)$ to non-ROG sets $\cT(\overline\cM)$.\qedhere
\end{example}

\subsection{Simple operations preserving the ROG property}\label{sec:ROG_operations}

The following results are useful in reasoning about extreme rays of $\cS(\cM)$, and thus inferring about operations preserving the ROG property.

It is easy to observe that an arbitrary intersection of ROG cones is ROG if and only if no new extreme rays are introduced; see \cite[Lemma 10]{argue2020necessary}.
\begin{lemma}
\label{lem:no_new_extreme}
Let $\cM\subseteq\S^{n+1}$ be a union $\cM = \bigcup_{\alpha\in A} \cM_\alpha$. Suppose that $\cS(\cM_\alpha)$ is ROG for every $\alpha\in A$.
Then, $\cS(\cM)$ is ROG if and only if
\begin{align*}
\extr(\cS(\cM)) \subseteq \bigcap_{\alpha\in A} \extr(\cS(\cM_\alpha)).
\end{align*}
\end{lemma}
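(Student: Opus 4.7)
The plan is to reduce the statement to the characterization in \cref{lem:rog_iff_extreme_rank_one}, which says that a closed convex cone $\cS\subseteq\S^{n+1}_+$ is ROG if and only if each of its extreme rays is rank-one. The starting observation is that $\cS(\cM) = \bigcap_{\alpha\in A}\cS(\cM_\alpha)$, since the constraint system defining $\cS(\cM)$ is exactly the union of the constraint systems of the $\cS(\cM_\alpha)$'s. In particular, $\cS(\cM)$ is a closed convex cone inside $\S^{n+1}_+$, so \cref{lem:rog_iff_extreme_rank_one} applies to it.

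For the forward direction, I would start from the hypothesis that $\cS(\cM)$ is ROG and take an arbitrary extreme ray $\R_+ Z$ of $\cS(\cM)$. By \cref{lem:rog_iff_extreme_rank_one}, $\rank(Z)=1$. Now I would invoke the following standing fact about the PSD cone: any nonzero rank-one PSD matrix $zz^\top$ spans an extreme ray of \emph{every} convex cone $\cC\subseteq\S^{n+1}_+$ that contains it. Indeed, if $zz^\top = (X+Y)/2$ with $X,Y\in\cC\subseteq\S^{n+1}_+$, then $X$ and $Y$ lie in the face of $\S^{n+1}_+$ exposed by $zz^\top$, and this face is the one-dimensional ray $\R_+ zz^\top$. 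Applying this to $\cC = \cS(\cM_\alpha)\supseteq \cS(\cM)$ shows $\R_+Z\in\extr(\cS(\cM_\alpha))$ for every $\alpha\in A$, which is what the inclusion requires.

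For the converse, I would take the hypothesis that every extreme ray of $\cS(\cM)$ is an extreme ray of each $\cS(\cM_\alpha)$ and again use \cref{lem:rog_iff_extreme_rank_one}. Pick any extreme ray $\R_+ Z$ of $\cS(\cM)$. By assumption, for any $\alpha\in A$, $\R_+ Z\in\extr(\cS(\cM_\alpha))$. Since $\cS(\cM_\alpha)$ is ROG, the other direction of \cref{lem:rog_iff_extreme_rank_one} forces $\rank(Z)=1$. So every extreme ray of $\cS(\cM)$ is rank-one, and one more application of \cref{lem:rog_iff_extreme_rank_one} yields that $\cS(\cM)$ is ROG.

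I do not anticipate a real obstacle here: the whole proof is a two-line translation through the extreme-ray characterization of the ROG property, with the only slightly non-automatic ingredient being the face-structure argument for the PSD cone showing that a rank-one matrix spans an extreme ray of every subcone of $\S^{n+1}_+$ containing it. The index set $A$ is allowed to be infinite, but this plays no role since everything is checked one extreme ray at a time.
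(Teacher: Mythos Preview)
Your proposal is correct and matches what the paper has in mind: the tutorial does not spell out a proof but simply says the lemma ``is easy to observe'' and defers to \cite[Lemma 10]{argue2020necessary}, which is exactly the extreme-ray argument you wrote out via \cref{lem:rog_iff_extreme_rank_one}. The one nontrivial step---that a rank-one PSD matrix generates an extreme ray of any convex cone contained in $\S^{n+1}_+$ that contains it---is standard and is implicitly part of the characterization in \cref{lem:rog_iff_extreme_rank_one}.
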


\begin{remark}
The statement of \cref{lem:no_new_extreme} is slightly stronger than one might expect given the situation for integral polyhedra.
Specifically, the intersection of a (finite) collection of integral polyhedra is polyhedral if no new extreme points are introduced while the converse does not necessarily hold.
The more accurate analogy for the ROG property in \cref{lem:no_new_extreme} is to the integrality property of polyhedral sets defined with \emph{pure binary} variables.
\end{remark}

\cite[Lemma 9]{argue2020necessary} establishes that we can refine the above result further when $\cM$ can be partitioned into ``non-interacting'' sets of constraints.
This lemma, which we state below, allows us to construct ROG cones out of simpler ROG cones.
\begin{lemma}\label{lem:non-interactingSufficientCond}
Let $\cM \subset \S^{n+1}$ be a finite union of compact sets $\cM = \bigcup_{i=1}^k \cM_i$.
Further, suppose that for all nonzero $Z\in\S^{n+1}_+$ and $i\in[k]$, if $\ip{M_i,Z} = 0$ for some $M_i\in\cM_i$, then $\ip{M, Z}<0$ for all $M\in\cM\setminus \cM_i$.
Then, $\cS(\cM)$ is ROG if and only if $\cS(\cM_i)$ is ROG for all $i\in[k]$.
\end{lemma}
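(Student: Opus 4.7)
The plan is to apply the extreme-ray characterization of ROG cones (\cref{lem:rog_iff_extreme_rank_one}) in both directions: a closed convex cone $\cC\subseteq\S^{n+1}_+$ is ROG if and only if every extreme ray of $\cC$ is spanned by a rank-one matrix. Both directions will rest on the following perturbation trick. Given a nonzero decomposition $Z=X+Y$ with $X,Y\in\S^{n+1}_+$, consider the affine family $W_\alpha\coloneqq 2\alpha X+2(1-\alpha)Y$ for $\alpha\in[0,1]$, which satisfies $W_{1/2}=Z$ and $W_\alpha\in\S^{n+1}_+$ for every $\alpha$. If for some $0<\epsilon<1/2$ we can certify that $W_\alpha\in\cC$ for $\alpha\in(1/2-\epsilon,1/2+\epsilon)$ for a given cone $\cC\ni Z$, then extremality of $\R_+ Z$ in $\cC$ applied to $Z=(W_{1/2-\epsilon/2}+W_{1/2+\epsilon/2})/2$ yields $W_{1/2\pm\epsilon/2}=\beta_\pm Z$ for some $\beta_\pm\geq 0$; matching the $X$- and $Y$-coefficients in these identities will then force $X,Y\in\R_+ Z$. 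In the special case $\cC=\S^{n+1}_+$ this also gives $\rank(Z)=1$.

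For the forward direction, fix $i\in[k]$ and let $\R_+ Z$ be an extreme ray of $\cS(\cM_i)$. Two cases arise. If no $M\in\cM_i$ is tight at $Z$, compactness of $\cM_i$ produces a neighborhood $U$ of $Z$ with $U\cap\S^{n+1}_+\subseteq\cS(\cM_i)$, and the perturbation trick applied with $\cC=\cS(\cM_i)$ yields $\rank(Z)=1$. Otherwise some $M_i\in\cM_i$ satisfies $\ip{M_i,Z}=0$, and the non-interacting hypothesis gives $\ip{M,Z}<0$ for every $M\in\cM\setminus\cM_i$, so $Z\in\cS(\cM)$. For any $Z=X+Y$ with $X,Y\in\cS(\cM)$ we have $X,Y\in\cS(\cM_i)$, and extremality of $\R_+ Z$ in $\cS(\cM_i)$ gives $X,Y\in\R_+ Z$. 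Hence $\R_+ Z$ is extreme in $\cS(\cM)$, and the ROG hypothesis on $\cS(\cM)$ delivers $\rank(Z)=1$.

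For the backward direction, let $\R_+ Z$ be an extreme ray of $\cS(\cM)$. If no $M\in\cM$ is tight at $Z$, the perturbation trick with $\cC=\cS(\cM)$ gives $\rank(Z)=1$ directly. Otherwise the non-interacting hypothesis supplies a \emph{unique} $i$ with some $M_i\in\cM_i$ satisfying $\ip{M_i,Z}=0$, and every $M\in\cM\setminus\cM_i$ satisfying $\ip{M,Z}<0$. The plan is to promote $\R_+ Z$ to an extreme ray of $\cS(\cM_i)$: for any $Z=X+Y$ with $X,Y\in\cS(\cM_i)$, convexity gives $W_\alpha\in\cS(\cM_i)$ for all $\alpha\in[0,1]$, and for $\alpha$ sufficiently close to $1/2$ continuity together with compactness of each $\cM_j$ for $j\neq i$ yields $\ip{M,W_\alpha}<0$ for all $M\in\cM\setminus\cM_i$, so $W_\alpha\in\cS(\cM)$. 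Extremality of $\R_+Z$ in $\cS(\cM)$ then forces $W_\alpha\in\R_+ Z$, and the coefficient-matching step gives $X,Y\in\R_+ Z$; hence $\R_+Z$ is extreme in $\cS(\cM_i)$ and the ROG hypothesis on $\cS(\cM_i)$ gives $\rank(Z)=1$.

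I expect the one technical subtlety to be the coefficient-matching step: from two identities $W_{\alpha_j}=\beta_j(X+Y)$ with $\alpha_1\neq\alpha_2$ one obtains $(2\alpha_j-\beta_j)X+(2(1-\alpha_j)-\beta_j)Y=0$ for $j=1,2$, and unless $X$ and $Y$ are linearly dependent this forces $\alpha_j=1/2$, contradicting $\alpha_1\neq\alpha_2$; linear dependence together with $X,Y\succeq 0$ then places both $X$ and $Y$ on the ray $\R_+ Z$. The remaining ingredients---extracting open neighborhoods from compactness of each $\cM_i$ via joint continuity of $(M,Z)\mapsto\ip{M,Z}$, and the dichotomy based on whether any constraint is active at $Z$---are routine.
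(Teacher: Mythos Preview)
Your proof is correct. The paper does not supply its own proof of this lemma (it only cites \cite[Lemma 9]{argue2020necessary}), so there is no in-paper argument to compare against; your extreme-ray approach via \cref{lem:rog_iff_extreme_rank_one} together with the perturbation family $W_\alpha$ is a natural and complete route.

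Two small remarks. First, in the backward direction you assert that the index $i$ with an active constraint is \emph{unique}. This need not hold if the $\cM_i$'s overlap (the lemma statement does not assume they are disjoint), but your argument only requires \emph{existence} of such an $i$, which follows immediately from the hypothesis; so nothing breaks. Second, your coefficient-matching step is cleanest when you choose the two parameter values symmetrically about $1/2$, say $\alpha_\pm = 1/2 \pm \epsilon/2$, so that $Z = (W_{\alpha_-}+W_{\alpha_+})/2$ exactly; you do this in the plan paragraph but it is worth making explicit in the execution, since for an arbitrary pair $\alpha_1\neq\alpha_2$ one does not get $Z$ as the midpoint. With those clarifications the argument is airtight.
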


\subsection{The ROG property and solutions of quadratic systems}\label{sec:ROG_quadratic_sol}

The ROG property of a set is naturally connected to the existence of nonzero solutions of underlying systems of quadratic inequality or equality constraints. 
We examine this connection next. 
To this end, for 
$\cM\subseteq \S^{n+1}$ and $Z\in\cS(\cM)$, define
\begin{align*}
\cE(Z,\cM) &\coloneqq \set{z\in\R^{n+1}:\,  \ip{M,Z} \leq z^\top Mz \leq 0,\,\forall M\in\cM}.
\end{align*}

Based on this, we have the following characterization of the ROG property \cite[Lemma 11]{argue2020necessary}.
\begin{lemma}
\label{lem:SM_rog_iff_nonzero_envelope}
$\cS(\cM)$ is ROG if and only if for every nonzero $Z\in\cS(\cM)$ we have $\range(Z)\cap \cE(Z,\cM)\neq \set{0}$.
\end{lemma}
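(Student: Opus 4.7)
My plan is to reduce both directions to the characterization of ROG cones in terms of extreme rays (\cref{lem:rog_iff_extreme_rank_one}), using a rank-one decomposition of $Z$ for the forward direction and a perturbation/extremality argument for the reverse.

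For the forward direction, suppose $\cS(\cM)$ is ROG and fix a nonzero $Z \in \cS(\cM)$. By the ROG property, I can write
\[
Z = \sum_{i=1}^{k} z_i z_i^\top
\]
with each $z_i z_i^\top \in \cS(\cM)$ (absorbing any scalar coefficients into the $z_i$). Since the $z_i$ span $\range(Z)$, each $z_i \in \range(Z)$. The key observation is then purely arithmetic: for every $M \in \cM$ we have $z_j^\top M z_j \le 0$ for all $j$ (because $z_j z_j^\top \in \cS(\cM)$), and $\langle M, Z\rangle = \sum_j z_j^\top M z_j$ is a sum of nonpositive terms, so for any individual index $i$,
\[
z_i^\top M z_i - \langle M, Z\rangle = -\sum_{j \neq i} z_j^\top M z_j \ge 0.
\]
Combined with $z_i^\top M z_i \le 0$, this gives $z_i \in \cE(Z, \cM)$. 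Since $Z \ne 0$, at least one $z_i$ is nonzero, establishing that $\range(Z) \cap \cE(Z, \cM) \ne \{0\}$.

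For the reverse direction, I will argue the contrapositive via \cref{lem:rog_iff_extreme_rank_one}: assuming the $\cE$-condition holds, I show every extreme ray of $\cS(\cM)$ has rank one. Suppose for contradiction that $\R_+ Z$ is an extreme ray with $\rank(Z) \ge 2$. Pick a nonzero $z \in \range(Z) \cap \cE(Z, \cM)$. I will exhibit nontrivial $\epsilon > 0$ for which both $Z_\pm \coloneqq Z \pm \epsilon z z^\top$ lie in $\cS(\cM)$. The PSD constraint for $Z_-$ holds for small $\epsilon$ because $z \in \range(Z)$. For the linear constraints: for $Z_+$ we have $\langle M, Z + \epsilon zz^\top\rangle = \langle M, Z\rangle + \epsilon z^\top M z \le 0$ since both summands are nonpositive. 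For $Z_-$, the defining inequality $\langle M, Z\rangle \le z^\top M z \le 0$ of $\cE(Z,\cM)$ yields $\langle M, Z\rangle \le \epsilon z^\top M z$ for every $\epsilon \in [0,1]$ (checking the two cases $z^\top M z = 0$ and $z^\top M z < 0$ separately), so $\langle M, Z - \epsilon zz^\top\rangle \le 0$. Thus $Z = \tfrac{1}{2}(Z_+ + Z_-)$ with both $Z_\pm \in \cS(\cM)$, and extremality forces $Z_\pm \in \R_+ Z$, which would make $zz^\top$ a scalar multiple of $Z$ — contradicting $\rank(Z) \ge 2$ and $z \ne 0$.

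The only place that requires any subtlety is the forward direction's observation that any $z_i$ in the decomposition already lies in $\cE(Z, \cM)$; this is not obvious from the statement of $\cE$, but falls out immediately once one notes that $\langle M, Z\rangle$ is a sum of nonpositive terms. The reverse direction is a standard $\pm \epsilon$ perturbation argument along an extreme ray, with the two inequalities in $\cE(Z, \cM)$ precisely engineered to make $Z_-$ and $Z_+$, respectively, feasible.
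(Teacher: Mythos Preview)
Your proof is correct. The forward direction's observation---that any $z_i$ in a rank-one decomposition of $Z$ automatically satisfies both inequalities defining $\cE(Z,\cM)$ because $\langle M,Z\rangle$ decomposes as a sum of nonpositive terms---is exactly the right idea, and the reverse direction's $\pm\epsilon$ perturbation exploiting the two-sided inequality in $\cE(Z,\cM)$ (with the uniform choice $\epsilon\in(0,1]$ handling all $M\in\cM$ simultaneously, even infinite $\cM$) is the standard and expected argument. The paper itself does not spell out a proof here, citing instead \cite[Lemma 11]{argue2020necessary}; your argument is the natural one and matches what one would expect to find there.
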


Note that by definition, $\cT(\cM)=\cS(\cM')$ where $\cM'\coloneqq \set{M:\, M\in\cM} \bigcup  \set{-M:\, M\in\cM}$. Therefore, in the case of $\cT(\cM)$, the set $\cE(Z,\cM)$ in \cref{lem:SM_rog_iff_nonzero_envelope} is replaced with a simpler set corresponding to solutions to a homogeneous system of quadratic equations.
In particular, for
$\cM\subseteq \S^{n+1}$, define
\begin{align*}
\cN(\cM) &\coloneqq \set{z\in\R^{n+1}:\, z^\top M z = 0 ,\,\forall M\in\cM}.
\end{align*}

\begin{remark}\label{rem:NM_EXM_rel}
It is easy to see that for every $\cM\subseteq \S^{n+1}$ and $Z \in \cS(\cM)$, we have $\cN(\cM) \subseteq \cE(Z,\cM)$.
\qedhere
\end{remark}
Therefore, we arrive at the following corollary; see \cite[Corollary 2]{argue2020necessary}.
\begin{corollary}
\label{cor:TM_rog_iff_nonzero_null}
$\cT(\cM)$ is ROG if and only if for every nonzero $Z\in \cT(\cM)$ we have $\range(Z)\cap \cN(\cM)\neq\set{0}$.
\end{corollary}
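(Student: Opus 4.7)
The plan is to obtain this corollary as a direct specialization of \cref{lem:SM_rog_iff_nonzero_envelope}, exploiting the fact (noted in the excerpt) that $\cT(\cM)=\cS(\cM')$ where $\cM'\coloneqq \cM\cup\set{-M:\,M\in\cM}$. Thus $\cT(\cM)$ is ROG if and only if $\cS(\cM')$ is ROG, and by \cref{lem:SM_rog_iff_nonzero_envelope} the latter is equivalent to $\range(Z)\cap\cE(Z,\cM')\neq\set{0}$ for every nonzero $Z\in\cS(\cM')=\cT(\cM)$. Therefore the corollary will follow once I identify $\cE(Z,\cM')$ with $\cN(\cM)$ for such $Z$.

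The key computation is to show that for any $Z\in\cT(\cM)$ we have $\cE(Z,\cM')=\cN(\cM)$. By the definition of $\cT(\cM)$, $\ip{M,Z}=0$ for every $M\in\cM$, and consequently $\ip{M',Z}=0$ for every $M'\in\cM'$ as well. Substituting this into the defining inequalities of $\cE(Z,\cM')$ yields
\begin{align*}
\cE(Z,\cM')=\set{z\in\R^{n+1}:\, 0\le z^\top M' z\le 0,\ \forall M'\in\cM'}=\set{z:\, z^\top M' z=0,\ \forall M'\in\cM'}.
\end{align*}
Since $z^\top(-M)z=-z^\top M z$, the condition $z^\top M' z=0$ for all $M'\in\cM'$ is the same as $z^\top M z=0$ for all $M\in\cM$, i.e., $z\in\cN(\cM)$. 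Hence $\cE(Z,\cM')=\cN(\cM)$, and substituting into the characterization from \cref{lem:SM_rog_iff_nonzero_envelope} yields precisely the stated condition.

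I do not expect any step to be a real obstacle here. The proof is essentially bookkeeping: rewriting $\cT(\cM)$ as $\cS(\cM')$, invoking \cref{lem:SM_rog_iff_nonzero_envelope}, and simplifying the envelope $\cE(Z,\cM')$ using the equality constraints that $Z\in\cT(\cM)$ already enforces. The only subtlety worth noting explicitly is that the ``$\le$'' inequalities in $\cE$ collapse to equalities precisely because $\ip{M,Z}=0$ cuts off the slack from both sides; this is also consistent with \cref{rem:NM_EXM_rel}, which already records the containment $\cN(\cM)\subseteq\cE(Z,\cM)$ and which, for $Z\in\cT(\cM)$ and the symmetrized set $\cM'$, becomes an equality.
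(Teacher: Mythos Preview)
Your proposal is correct and follows essentially the same approach as the paper: the text preceding the corollary explicitly observes that $\cT(\cM)=\cS(\cM')$ with $\cM'=\cM\cup\{-M:M\in\cM\}$ and that the envelope set in \cref{lem:SM_rog_iff_nonzero_envelope} therefore collapses to $\cN(\cM)$. Your computation verifying $\cE(Z,\cM')=\cN(\cM)$ for $Z\in\cT(\cM)$ makes this sketch fully explicit and is exactly what is intended.
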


Note that for any rank-one matrix $Z=zz^\top$ in $\cS(\cM)$, we always have $z\in\range(Z)\cap \cE(Z,\cM)$. 
Therefore, when applying \cref{lem:SM_rog_iff_nonzero_envelope}, it suffices to check the right hand side only for matrices $Z$ with rank at least two. The same is true for \cref{cor:TM_rog_iff_nonzero_null}.

\begin{figure}
	\centering
	\begin{tikzcd}
		\boxed{\cS(\cM)\text{ ROG}} \arrow[r,Leftrightarrow]\arrow[d,Rightarrow]
		&\boxed{\forall Z\in\cS(\cM)\setminus\set{0},\,\range(Z)\cap\cE(Z,\cM)\neq \set{0}} \arrow[d,Rightarrow]\\
		\boxed{\cT(\cM)\text{ ROG}} \arrow[r,Leftrightarrow]
		&\boxed{\forall Z\in\cT(\cM)\setminus\set{0},\,\range(Z)\cap\cN(\cM)\neq \set{0}}
		\end{tikzcd}
	\caption{A summary of \cref{lem:SM_rog_iff_nonzero_envelope} and \cref{cor:TM_rog_iff_nonzero_null}.}
\end{figure}

Our tool set allows us to recover a number of known results from the literature.
The following result regarding spectrahedral cones defined by a single LMI is from \citet{sturm2003cones}. 
\begin{lemma}
\label{lem:M_leq_1}
Consider any $M\in\S^{n+1}$, and let $\cM = \set{M}$. Then $\cS(\cM)$ is ROG.
\end{lemma}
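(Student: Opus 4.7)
My plan is to apply \cref{lem:SM_rog_iff_nonzero_envelope}: it suffices to show that for every nonzero $Z \in \cS(\set{M})$ there exists a nonzero $z \in \range(Z) \cap \cE(Z,\set{M})$, where by definition $\cE(Z,\set{M}) = \set{z \in \R^{n+1} : \ip{M,Z} \leq z^\top M z \leq 0}$. To this end, I would factor $Z = UU^\top$ with $U \in \R^{(n+1)\times r}$ of full column rank $r = \rank(Z)$. Every element of $\range(Z)$ has the form $z = Uv$ for some $v \in \R^r$, and $z \neq 0$ iff $v \neq 0$. Setting $\tilde M \coloneqq U^\top M U \in \S^r$, one has $\ip{M,Z} = \tr(\tilde M)$ and $z^\top M z = v^\top \tilde M v$, so the task reduces to exhibiting a nonzero $v \in \R^r$ satisfying $\tr(\tilde M) \leq v^\top \tilde M v \leq 0$.

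Since $Z \in \cS(\set{M})$ we have $\tr(\tilde M) = \ip{M,Z} \leq 0$, so $\mu \coloneqq \tr(\tilde M)/r$ lies in $[\tr(\tilde M), 0]$. Since $\mu$ is the average of the eigenvalues of $\tilde M$, it also lies in $[\lambda_{\min}(\tilde M), \lambda_{\max}(\tilde M)]$. The Rayleigh quotient characterization (equivalently, the intermediate value theorem applied to the continuous map $v \mapsto v^\top \tilde M v$ on the unit sphere of $\R^r$) then furnishes a unit vector $v$ with $v^\top \tilde M v = \mu$. Taking $z = Uv$ gives the desired nonzero element of $\range(Z) \cap \cE(Z,\set{M})$.

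The essential step is the reduction via $Z = UU^\top$, which converts a question about matrices in $\S^{n+1}_+$ subject to a single LMI into an elementary statement about a symmetric matrix on $\R^r$ of nonpositive trace; once this reduction is in place the Rayleigh/IVT step finishes the job immediately, so I do not anticipate a substantive obstacle. As a sanity check I would also keep in mind an alternative argument based on \cref{lem:rog_iff_extreme_rank_one}: assume toward contradiction that $Z$ spans an extreme ray of $\cS(\set{M})$ with $\rank(Z) \geq 2$; if $\ip{M,Z} < 0$, the LMI is locally inactive and $Z$ fails to be extreme already in $\S^{n+1}_+$, while if $\ip{M,Z} = 0$, writing $Z = UD_1U^\top + UD_2U^\top$ with $D_1,D_2 \in \S^r_+$ summing to $I_r$ and $\ip{\tilde M, D_i} = 0$ produces a nontrivial splitting (such $D_i$ can be built by slight perturbations of $\tfrac12 I_r$ inside $\spann\{I_r, \tilde M\}^\perp \cap \S^r$). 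The envelope-based proof above is cleaner and is what I would write up.
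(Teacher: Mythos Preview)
Your proof is correct. It is essentially the same approach as the paper's: the paper derives \cref{lem:M_leq_1} from \cref{lem:rog_iff_nontrivial_faces_rog} and \cref{cor:TM_rog_iff_nonzero_null}, i.e., it first reduces to showing $\cT(\set{M})$ is ROG and then invokes the null-space criterion, whereas you apply the more general envelope criterion \cref{lem:SM_rog_iff_nonzero_envelope} directly to $\cS(\set{M})$. Both routes hinge on the same elementary observation---after the $Z=UU^\top$ reduction, a unit vector $v$ with $v^\top \tilde M v$ equal to the average eigenvalue lies in the required interval---and your direct route simply skips the intermediate reduction to the equality case.
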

Using our tool set we see that \cref{lem:M_leq_1} follows from \cref{lem:rog_iff_nontrivial_faces_rog,cor:TM_rog_iff_nonzero_null}.
In the case of ROG sets defined by two LMIs, \cref{lem:TMprime_rog_then_SM_rog,lem:M_leq_1,cor:SM_rog_then_TM_rog} lead to the following characterization (see \cite[Corollary 3]{argue2020necessary}).
\begin{corollary}
\label{cor:M2_SM_rog_iff_TM_rog}
Suppose $\abs{\cM} = 2$, then $\cS(\cM)$ is ROG if and only if $\cT(\cM)$ is ROG.
\end{corollary}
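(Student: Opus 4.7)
The plan is to prove the two implications separately, noting that both directions follow quickly from the tools already assembled in \cref{sec:ROG_LMIs_to_LMEs,sec:ROG_quadratic_sol}.

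For the forward direction, $\cS(\cM)$ ROG $\Rightarrow$ $\cT(\cM)$ ROG, I would simply invoke \cref{cor:SM_rog_then_TM_rog}, which gives exactly this implication for an arbitrary set $\cM\subseteq\S^{n+1}$ and does not require $|\cM|=2$.

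For the reverse direction, I would apply \cref{lem:TMprime_rog_then_SM_rog}: since $\cM$ is finite, it suffices to show that $\cT(\cM')$ is ROG for every subset $\cM'\subseteq \cM$. Writing $\cM=\{M_1,M_2\}$, there are only four subsets to check. When $\cM'=\emptyset$, we have $\cT(\emptyset)=\S^{n+1}_+$, which is ROG by the spectral theorem (\cref{ex:rog_first_examples}(1)). When $\cM'=\{M_i\}$ for $i\in\{1,2\}$, \cref{lem:M_leq_1} yields that $\cS(\{M_i\})$ is ROG, and then \cref{cor:SM_rog_then_TM_rog} applied to the singleton $\{M_i\}$ implies that $\cT(\{M_i\})$ is ROG. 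Finally, when $\cM'=\cM$, the set $\cT(\cM)$ is ROG by hypothesis. Thus every subset satisfies the ROG condition, and \cref{lem:TMprime_rog_then_SM_rog} delivers the conclusion that $\cS(\cM)$ is ROG.

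There is no substantive obstacle here; the statement is essentially a bookkeeping consequence of combining \cref{lem:TMprime_rog_then_SM_rog} (which reduces $\cS$-ROGness to $\cT$-ROGness across all subsets), \cref{lem:M_leq_1} (which handles singletons on the $\cS$ side), and \cref{cor:SM_rog_then_TM_rog} (which transfers between $\cS$ and $\cT$ for each singleton). The only point that deserves a brief remark is why the reduction via \cref{lem:TMprime_rog_then_SM_rog} works only when $|\cM|$ is small: it requires checking ROGness for \emph{all} subsets, and for $|\cM|=2$ this is feasible precisely because singletons and the empty set are covered automatically by \cref{lem:M_leq_1} and the ROGness of $\S^{n+1}_+$.
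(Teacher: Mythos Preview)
Your proposal is correct and follows essentially the same route as the paper: the paper states that the result follows from \cref{lem:TMprime_rog_then_SM_rog}, \cref{lem:M_leq_1}, and \cref{cor:SM_rog_then_TM_rog}, which is precisely the combination of ingredients you use and in the same roles.
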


In \cref{sec:necessary_conditions} we will present necessary and sufficient conditions for a $\cT(\set{M_1,M_2})$ to be ROG; see \cref{thm:two_LMI_NS}. Combined with \cref{cor:M2_SM_rog_iff_TM_rog}, this then gives a complete characterization of ROG sets of the form $\cS(\set{M_1,M_2})$.

\subsection{Sufficient conditions}\label{sec:sufficient_conditions}

In this subsection, we present a number of sufficient conditions for the ROG property.
We begin with a result related to the S-lemma~\cite{fradkov1979s-procedure} and a convexity theorem due to~\citet{dines1941mapping}.
\begin{lemma}\label{lem:psd_sum_rog}
Let $\cM = \set{M_1,M_2}$ and suppose there exists $(\alpha_1,\alpha_2)\neq (0,0)$ such that $\alpha_1M_1+\alpha_2M_2 \in \S^{n+1}_+$. Then, $\cS(\cM)$ is ROG.
\end{lemma}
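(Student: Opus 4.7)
The plan is to reduce from $\cS(\cM)$ to $\cT(\cM)$ using Corollary \ref{cor:M2_SM_rog_iff_TM_rog}, and then verify the range/null-space criterion of Corollary \ref{cor:TM_rog_iff_nonzero_null}. So the goal becomes: for every nonzero $Z\in\cT(\cM)$, produce a nonzero $z\in\range(Z)$ with $z^\top M_1 z = z^\top M_2 z = 0$. The rank-one case is immediate (take $z$ such that $Z = zz^\top$), so I will assume $\rank(Z)\ge 2$.

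Let $W\coloneqq\range(Z)$ and let $\overline M_i$ denote the restriction $(M_i)_W$ of $M_i$ to $W$; the restriction $Z_W$ is then positive definite on $W$. Since $\ip{M_1,Z}=\ip{M_2,Z}=0$, we have $\ip{\alpha_1\overline M_1+\alpha_2\overline M_2,Z_W}=0$. The hypothesis gives $\alpha_1M_1+\alpha_2M_2\succeq 0$, so also $\alpha_1\overline M_1+\alpha_2\overline M_2\succeq 0$ as a linear operator on $W$. A positive semidefinite matrix that has zero inner product with a positive definite matrix must itself be zero; hence
\begin{align*}
\alpha_1\overline M_1+\alpha_2\overline M_2 = 0 \qquad\text{on } W.
\end{align*}
Since $(\alpha_1,\alpha_2)\neq(0,0)$, the restrictions $\overline M_1$ and $\overline M_2$ are linearly dependent on $W$, so there is a nonzero linear functional vanishing on them; in particular, any $z\in W$ with $z^\top \overline M_i z = 0$ for one $i$ (with $\alpha_i\neq 0$) will automatically satisfy $z^\top \overline M_j z = 0$ for the other index as well.

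So it remains to find a nonzero $z\in W$ that is isotropic for one of the restricted forms, say $\overline M_1$ if $\alpha_1\neq 0$ (otherwise swap roles). If $\overline M_1 = 0$ on $W$, any nonzero $z\in W$ works. Otherwise, since $\ip{\overline M_1,Z_W}=0$ and $Z_W\succ 0$ on $W$, the form $\overline M_1$ must have eigenvalues of both signs on $W$; pick eigenvectors $v_+,v_-\in W$ with $v_+^\top \overline M_1 v_+>0$ and $v_-^\top \overline M_1 v_-<0$. Then the continuous function $t\mapsto (tv_+ + (1-t)v_-)^\top \overline M_1 (tv_+ + (1-t)v_-)$ changes sign on $[0,1]$, so by the intermediate value theorem it vanishes at some $t^\ast\in(0,1)$. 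The corresponding vector $z \coloneqq t^\ast v_+ + (1-t^\ast)v_-$ is nonzero (since $v_+,v_-$ are linearly independent) and lies in $W=\range(Z)$, giving $z^\top M_1 z = z^\top M_2 z = 0$, i.e., $z\in\range(Z)\cap\cN(\cM)\setminus\{0\}$.

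The key idea is the ``orthogonality forces equality'' step, which collapses two forms to a single form on $W$; everything else is a standard IVT/signature argument. No step looks like a serious obstacle here, since the sufficient conditions developed in \cref{sec:ROG_quadratic_sol} already do most of the work—the only thing to track is that the PSD combination given by hypothesis survives the restriction to $\range(Z)$, which it does automatically because restriction preserves positive semidefiniteness.
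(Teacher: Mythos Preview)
Your approach is exactly the one the paper has in mind: reduce via Corollary~\ref{cor:M2_SM_rog_iff_TM_rog} to $\cT(\cM)$ and then verify the range criterion of Corollary~\ref{cor:TM_rog_iff_nonzero_null}. The key step---that the positive semidefinite combination $\alpha_1\overline M_1+\alpha_2\overline M_2$ must vanish on $W=\range(Z)$ because it pairs to zero with the positive definite $Z_W$---is correct and is the whole content of the lemma.

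There is one small slip in your case analysis. You assert that if $\alpha_i\neq 0$ and $z^\top\overline M_i z=0$ then automatically $z^\top\overline M_j z=0$; but from $\alpha_i\overline M_i=-\alpha_j\overline M_j$ you only obtain $\alpha_j\, z^\top\overline M_j z=0$, which is vacuous when $\alpha_j=0$. Concretely, if $(\alpha_1,\alpha_2)=(1,0)$ then $\overline M_1=0$ on $W$ while $\overline M_2$ may be a nonzero indefinite form, so ``any nonzero $z\in W$ works'' is false for $\overline M_2$. The repair is immediate and uses nothing new: since $\overline M_1$ and $\overline M_2$ are proportional on $W$, the joint isotropy condition $z\in\cN(\cM)\cap W$ reduces to $z^\top\overline M z=0$ for any single nonzero $\overline M\in\spann(\overline M_1,\overline M_2)$ (or to all of $W$ if both restrictions vanish). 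Apply your IVT argument to that $\overline M$, which is indefinite because $\ip{\overline M,Z_W}=0$ with $Z_W\succ 0$.
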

The proof of this lemma is straightforward and follows from \cref{cor:M2_SM_rog_iff_TM_rog}.

\begin{remark}
  \label{rem:geometric_interpretation_i}
  The condition in \cref{lem:psd_sum_rog} has a simple geometric interpretation.
  Specifically, this condition guarantees that the two LMEs defining
  $\cT(\set{M_1,M_2})$ only interact with each other on a single (possibly
  trivial) face of the positive semidefinite cone. Furthermore, on this face,
  the two LMEs impose the same (possibly trivial) constraint. See \cref{fig:nonintersecting_ROG}.\qedhere
\end{remark}

The following result from \cite[Proposition 1]{argue2020necessary}
generalizes \cref{lem:psd_sum_rog}.

\begin{proposition}
\label{prop:gen_psd_sum_rog_SM}
Let $\cM\subseteq\S^{n+1}$ be finite. Suppose that for every distinct pair $M, M'\in\cM$, there exists $(\alpha,\beta)\neq (0,0)$ such that $\alpha M + \beta M'$ is positive semidefinite. Then, $\cS(\cM)$ is ROG.
\end{proposition}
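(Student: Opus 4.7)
The plan is to invoke \cref{lem:TMprime_rog_then_SM_rog}, which reduces the claim that $\cS(\cM)$ is ROG to showing $\cT(\cM')$ is ROG for every subset $\cM' \subseteq \cM$. Several cases will be immediate: when $\cM' = \emptyset$, $\cT(\cM') = \S^{n+1}_+$ is ROG by the spectral theorem, and when $\abs{\cM'} = 1$, \cref{lem:M_leq_1} combined with \cref{cor:SM_rog_then_TM_rog} gives the result. The interesting case is $\abs{\cM'} \geq 2$, which I will handle via \cref{cor:TM_rog_iff_nonzero_null}: it suffices to exhibit, for every nonzero $Z \in \cT(\cM')$, a nonzero $v \in \range(Z) \cap \cN(\cM')$.

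Given such a $Z$ with $\rank(Z) \geq 2$ (the rank-one case being immediate), I would set $W \coloneqq \range(Z)$, let $r \coloneqq \dim W \geq 2$, and write $N_i \in \S^r$ for a matrix representation of $M_i$ restricted to $W$. The key observation is that for each distinct pair $M_i, M_j \in \cM'$, the hypothesized PSD combination $P_{ij} \coloneqq \alpha_{ij} M_i + \beta_{ij} M_j \succeq 0$ with $(\alpha_{ij}, \beta_{ij}) \neq (0,0)$ satisfies $\ip{P_{ij}, Z} = 0$, since $\ip{M_i, Z} = \ip{M_j, Z} = 0$. Because $P_{ij}, Z \succeq 0$, this forces $\range(Z) \subseteq \ker(P_{ij})$, so $P_{ij}$ vanishes identically on $W$. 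In coordinates on $W$ this reads $\alpha_{ij} N_i + \beta_{ij} N_j = 0$ for every distinct pair $i, j$.

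From this pairwise linear dependence, I will argue that all nonzero $N_i$ are scalar multiples of a single nonzero $N \in \S^r$: if $N_i$ and $N_j$ are both nonzero, then neither $\alpha_{ij}$ nor $\beta_{ij}$ can vanish (else the nontrivial coefficient would force one of $N_i, N_j$ to be zero), and so $N_i$ and $N_j$ must be proportional. Writing $Z_W \in \S^r$ for the positive definite compression of $Z$ to $W$, the identity $\ip{M_i, Z} = \ip{N_i, Z_W} = 0$ becomes $\lambda_i \ip{N, Z_W} = 0$, where $N_i = \lambda_i N$. If some $N_i$ is nonzero, then $\ip{N, Z_W} = 0$; combined with $Z_W \succ 0$, this rules out $N$ being semidefinite of either sign, so $N$ must be indefinite on $W$ and hence admit a nonzero isotropic vector $v \in W$. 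Whether $N = 0$ or $N$ is indefinite, there exists nonzero $v \in W$ with $v^\top M_i v = \lambda_i v^\top N v = 0$ for every $i$, producing the required element of $\range(Z) \cap \cN(\cM')$.

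The main technical obstacle will be the pairwise-to-global collinearity step: converting the pairwise linear relations $\alpha_{ij} N_i + \beta_{ij} N_j = 0$ on $W$ into the conclusion that all nonzero restricted forms $N_i$ lie along a single ray in $\S^r$. Once this is in hand, the remainder is a clean trace-vanishing argument that exploits the positive definiteness of $Z_W$ to force either vanishing or indefiniteness of $N$, and hence the existence of a joint isotropic vector in $W$.
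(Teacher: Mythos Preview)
Your proof is correct and follows essentially the same route as the paper: reduce via \cref{lem:TMprime_rog_then_SM_rog} to showing each $\cT(\cM')$ is ROG, then use the PSD combinations $P_{ij}$ (which must annihilate every $Z\in\cT(\cM')$) to conclude that the restrictions $N_i$ of the $M_i$ to $\range(Z)$ are pairwise---hence jointly---collinear, so that on that face only a single LME is being imposed. The step you flag as the ``main technical obstacle'' (pairwise $\Rightarrow$ global collinearity among the nonzero $N_i$) is in fact immediate, since proportionality is transitive on nonzero matrices; the remainder of your argument (indefiniteness of $N$ from $\ip{N,Z_W}=0$ with $Z_W\succ 0$, then existence of an isotropic vector) is exactly the mechanism behind the paper's remark that ``every constraint defining $\cT(\cM')$ imposes the same constraint'' on the relevant face.
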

Intuitively, the conditions in this proposition have a similar geometric
interpretation to the conditions in \cref{lem:psd_sum_rog} (see
\cref{rem:geometric_interpretation_i}). 
Specifically, it is possible to show that for every $\cM'\subseteq\cM$, the set $\cT(\cM')$ is contained in some face of the positive semidefinite cone on which every constraint defining $\cT(\cM')$ imposes the same constraint. See \cref{fig:nonintersecting_ROG}.

\begin{figure}
	\begin{center}
		\begin{tikzpicture}
			\node[anchor=south] (image) at (-1,0) {
				\includegraphics[width=0.25\textwidth]{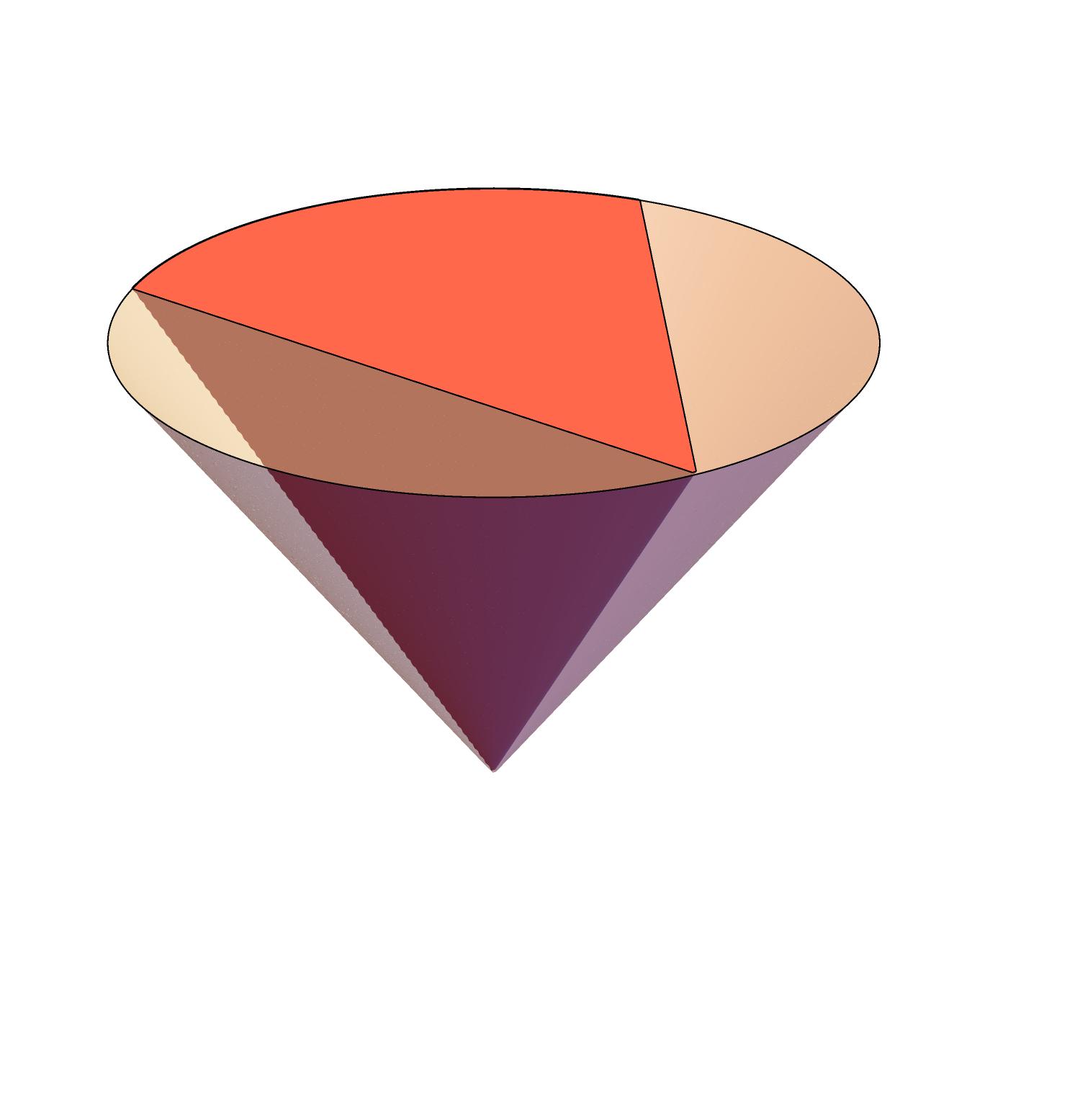}
			};

			\node[anchor=south] (image) at (5,0) {
				\includegraphics[width=0.25\textwidth]{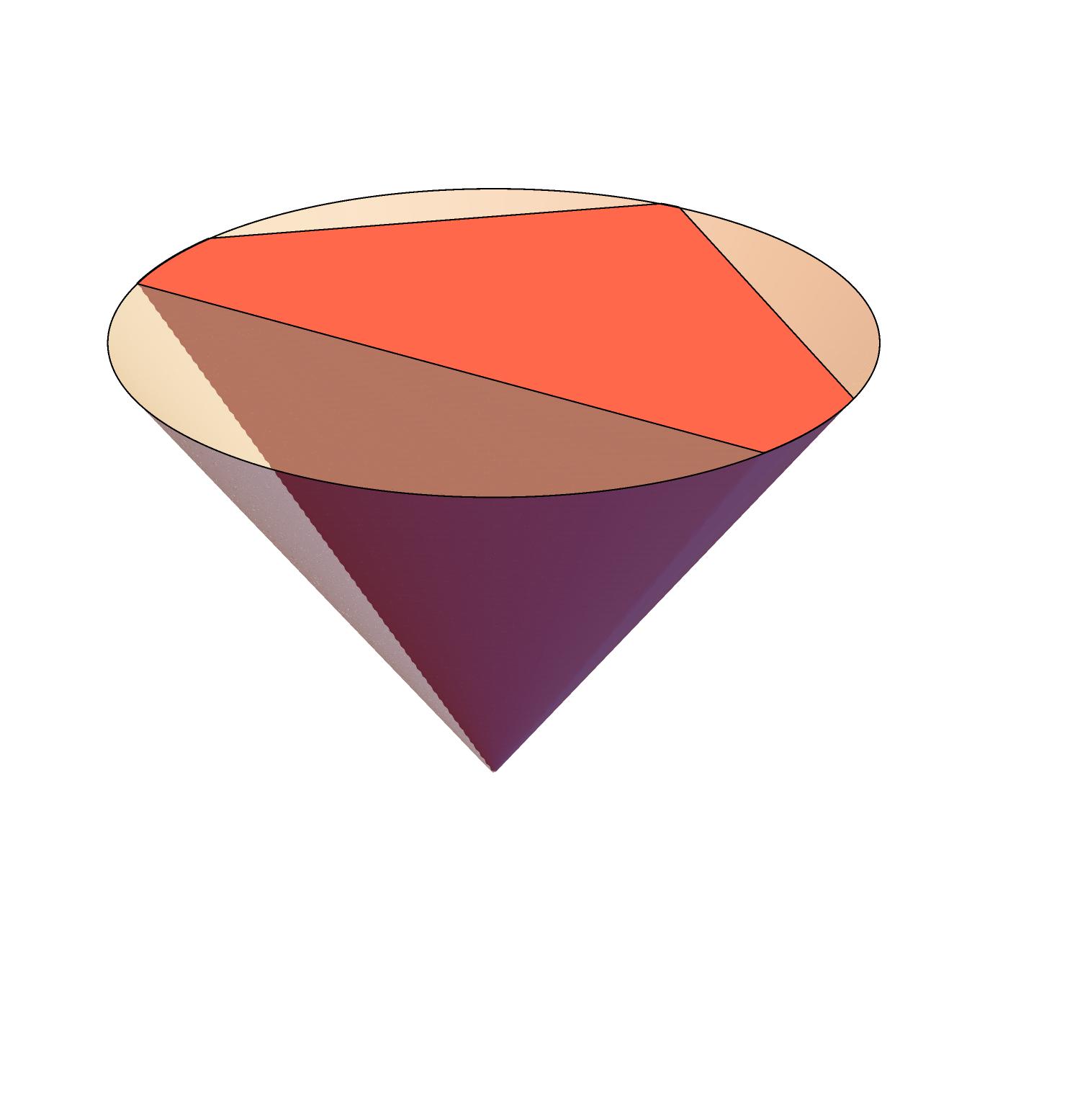}
			};
		\end{tikzpicture}
	\end{center}
	\vspace{-3.5em}
	\caption{Illustrations of \cref{lem:psd_sum_rog} (left picture) and \cref{prop:gen_psd_sum_rog_SM} (right picture) in the setting $\S^{n+1}_+=\S^2_+$. Recall that the matrices on the boundary of $\S^2_+$ all have rank at most one.}
	\label{fig:nonintersecting_ROG}
\end{figure}

\cref{lem:SM_rog_iff_nonzero_envelope,rem:NM_EXM_rel} lead to the following sufficient condition for the ROG property \cite[Theorem 1]{argue2020necessary}.
\begin{theorem}
\label{thm:ab_suffices}
Suppose $\cM = \set{\Sym(ab^\top):\, b\in \cB}$ for some $a\in\R^{n+1}$ and $\cB\subseteq\R^{n+1}$. Then, for every positive semidefinite $Z$ of rank at least two, we have $\range(Z)\cap\cN(\cM)\neq \set{0}$. In particular, $\cS(\cM)$ is ROG.
\end{theorem}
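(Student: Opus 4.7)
The plan is to apply Lemma~\ref{lem:SM_rog_iff_nonzero_envelope} together with the containment $\cN(\cM)\subseteq\cE(Z,\cM)$ noted in Remark~\ref{rem:NM_EXM_rel}: it suffices to exhibit, for every $Z\in\cS(\cM)$ of rank at least two, a nonzero element of $\range(Z)\cap \cN(\cM)$. Since the statement of the theorem is already phrased in terms of the stronger null-space condition, I do not even need to use the assumption $Z\in\cS(\cM)$; the argument will go through for every PSD $Z$ of rank at least two.

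First I would compute the quadratic form associated with $M=\Sym(ab^\top)$. Writing $M=\tfrac{1}{2}(ab^\top+ba^\top)$, one immediately gets
\begin{equation*}
z^\top M z \;=\; (a^\top z)(b^\top z) \qquad \forall\, z\in\R^{n+1}.
\end{equation*}
Consequently, for $\cM=\{\Sym(ab^\top):b\in\cB\}$,
\begin{equation*}
\cN(\cM)\;=\;\bigl\{z\in\R^{n+1}:\,(a^\top z)(b^\top z)=0,\ \forall b\in\cB\bigr\}\;\supseteq\;\{a\}^\perp.
\end{equation*}
In particular, every vector orthogonal to $a$ lies in $\cN(\cM)$, regardless of the set $\cB$.

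Next I would exploit a trivial dimension count on $\range(Z)$. Since $Z$ is PSD of rank at least two, $\range(Z)$ is a linear subspace of dimension $\geq 2$. The linear functional $z\mapsto a^\top z$ restricted to $\range(Z)$ has kernel of codimension at most one, hence dimension at least $\dim(\range(Z))-1\geq 1$. Therefore there exists a nonzero $z\in\range(Z)$ with $a^\top z=0$, and this $z$ lies in $\cN(\cM)$ by the previous paragraph. This produces the required nonzero element of $\range(Z)\cap\cN(\cM)$.

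Finally, to conclude that $\cS(\cM)$ is ROG, I would combine this with Lemma~\ref{lem:SM_rog_iff_nonzero_envelope}: for any $Z\in\cS(\cM)$ of rank $\geq 2$, the vector $z$ constructed above satisfies $z\in\range(Z)\cap\cN(\cM)\subseteq \range(Z)\cap\cE(Z,\cM)$ by Remark~\ref{rem:NM_EXM_rel}, while for rank-one $Z=zz^\top\in\cS(\cM)$ we trivially have $z\in\range(Z)\cap\cE(Z,\cM)$. Honestly, there is no real obstacle here: the argument is essentially a one-line dimension count once one recognizes that $\Sym(ab^\top)$ acts as $(a^\top z)(b^\top z)$ and that the hyperplane $\{a\}^\perp$ is uniformly contained in $\cN(\cM)$. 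The only mild subtlety worth remarking on is that this common null-space structure, which is what forces the ROG property independently of how complicated $\cB$ is, is precisely the geometric content of the theorem.
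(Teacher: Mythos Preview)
Your proposal is correct and follows precisely the approach the paper sets up: compute $z^\top \Sym(ab^\top)z=(a^\top z)(b^\top z)$, observe $\{a\}^\perp\subseteq\cN(\cM)$, use the dimension count on $\range(Z)$, and conclude via \cref{lem:SM_rog_iff_nonzero_envelope} and \cref{rem:NM_EXM_rel}. This is exactly the argument the paper's toolkit is designed to support, and there is nothing to add.
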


Theorem~\ref{thm:ab_suffices} has a few immediate corollaries. The first of these corollaries  allows us to handle conic constraints added to the positive semidefinite cone. Let $K$ be a closed convex cone and define $\cM \coloneqq \set{\Sym(-cb^\top):\, b\in K^*}$ where $K^*$ is the dual cone of $K$. Then, $\set{Z\in\S^{n+1}_+:\, Zc\in K} = \cS(\cM)$. We arrive at the following corollaries; see \cite[Corollaries 4 and 5]{argue2020necessary}.
\begin{corollary}\label{cor:cone_c_suffices}
Let $K\subseteq \R^{n+1}$ be any closed convex cone and consider an arbitrary vector $c\in\R^{n+1}$. Then, the set $\set{Z\in\bb S^{n+1}_+:~ Zc\in K}$ is ROG.
\end{corollary}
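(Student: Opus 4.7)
The plan is to apply \cref{thm:ab_suffices} directly, using the reformulation already hinted at in the discussion preceding the corollary.

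First, I would reformulate the set $\{Z \in \S^{n+1}_+ : Zc \in K\}$ as a set of the form $\cS(\cM)$. The natural candidate is
\[
\cM \coloneqq \set{\Sym(-cb^\top):\, b \in K^*},
\]
where $K^*$ denotes the dual cone of $K$. To justify this reformulation, I would use the bipolar theorem: since $K$ is a closed convex cone, we have $K = (K^*)^*$, so for any $y \in \R^{n+1}$, the condition $y \in K$ is equivalent to $\langle b, y \rangle \geq 0$ for all $b \in K^*$. Applying this with $y = Zc$ and using the identity
\[
\langle b, Zc \rangle = \ip{cb^\top, Z} = \ip{\Sym(cb^\top), Z}
\]
(the last equality uses symmetry of $Z$), I get that $Zc \in K$ if and only if $\ip{\Sym(-cb^\top), Z} \leq 0$ for every $b \in K^*$. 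This gives precisely $\{Z \in \S^{n+1}_+ : Zc \in K\} = \cS(\cM)$.

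Second, I would observe that $\cM$ has exactly the form required by \cref{thm:ab_suffices}, namely $\cM = \{\Sym(ab^\top):\, b \in \cB\}$ with the choices $a = -c$ and $\cB = K^*$. Applying \cref{thm:ab_suffices} then yields that $\cS(\cM)$ is ROG, which completes the proof.

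The argument is short and has no real obstacle: all the work has been done in \cref{thm:ab_suffices}, and the only nontrivial ingredient here is the bipolar-theorem reformulation turning a linear-image-in-a-cone constraint into a family of LMIs indexed by the dual cone. The fact that $\cB = K^*$ may be an infinite (even non-polyhedral) set does not matter, since \cref{thm:ab_suffices} places no finiteness or structural hypothesis on $\cB$.
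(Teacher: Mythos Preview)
Your proposal is correct and matches the paper's approach exactly: the paper defines $\cM \coloneqq \set{\Sym(-cb^\top):\, b\in K^*}$, notes (via the bipolar theorem) that $\set{Z\in\S^{n+1}_+:\, Zc\in K} = \cS(\cM)$, and then invokes \cref{thm:ab_suffices}. There is nothing to add.
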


\begin{corollary}\label{cor:ac_bc_suffices}
Let $a,b,c\in\R^{n+1}$. Then, the set $\set{Z\in\bb S^{n+1}_+:~ a^\top Zc\geq 0,\, b^\top Z c\geq 0}$ is ROG.
\end{corollary}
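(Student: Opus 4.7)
The plan is to reduce Corollary~\ref{cor:ac_bc_suffices} directly to Corollary~\ref{cor:cone_c_suffices} by choosing an appropriate closed convex cone $K$. Define
\begin{align*}
K \coloneqq \set{v\in\R^{n+1}:\, a^\top v \geq 0,\ b^\top v \geq 0}.
\end{align*}
As the intersection of two closed halfspaces through the origin, $K$ is a closed convex cone in $\R^{n+1}$. For any $Z\in\S^{n+1}_+$, the condition $Zc\in K$ unwinds to $a^\top(Zc)\geq 0$ and $b^\top(Zc)\geq 0$, which is exactly the pair of inequalities defining our set. Hence
\begin{align*}
\set{Z\in\S^{n+1}_+:\, a^\top Zc\geq 0,\ b^\top Zc \geq 0} = \set{Z\in\S^{n+1}_+:\, Zc\in K},
\end{align*}
and Corollary~\ref{cor:cone_c_suffices} immediately yields the ROG property.

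As an alternative route, one can appeal directly to Theorem~\ref{thm:ab_suffices}. Since $Z$ is symmetric, $a^\top Zc = \ip{\Sym(ac^\top), Z}$, so the two constraints become $\ip{-\Sym(ac^\top),Z}\leq 0$ and $\ip{-\Sym(bc^\top),Z}\leq 0$. A short computation shows $-\Sym(ac^\top) = \Sym(c(-a)^\top)$ (and similarly for $b$), so taking $\cM \coloneqq \set{\Sym(c(-a)^\top),\,\Sym(c(-b)^\top)}$ places us precisely in the template of Theorem~\ref{thm:ab_suffices}, with the role of the vector ``$a$'' there played by $c$ and with $\cB = \set{-a,-b}$. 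The theorem then immediately establishes that $\cS(\cM)$ is ROG.

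The only substantive step, and the only possible obstacle, is recognizing that the constraints of interest fit the templates supplied by the preceding results. Once the cone $K$ (or equivalently the set $\cB$) is identified, nothing remains beyond the trivial verification that $K$ is a closed convex cone (respectively, that $\cB\subseteq\R^{n+1}$). No additional calculation is required; the corollary is essentially a direct instantiation of the machinery already developed in Theorem~\ref{thm:ab_suffices} and Corollary~\ref{cor:cone_c_suffices}.
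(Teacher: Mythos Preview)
Your proposal is correct and matches the paper's approach: the paper presents both Corollary~\ref{cor:cone_c_suffices} and Corollary~\ref{cor:ac_bc_suffices} as immediate consequences of Theorem~\ref{thm:ab_suffices}, and your two routes---specializing Corollary~\ref{cor:cone_c_suffices} to the polyhedral cone $K=\set{v:\,a^\top v\geq 0,\ b^\top v\geq 0}$, or directly rewriting the constraints as $\ip{\Sym(c(-a)^\top),Z}\leq 0$ and $\ip{\Sym(c(-b)^\top),Z}\leq 0$---are exactly the intended instantiations.
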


As an immediate application of \cref{cor:ac_bc_suffices} and \cref{prop:epi_conv_hull_bounded_quadratic_set}, we recover the result presented in \cref{ex:big-M} on the perspective reformulation trick.

\begin{remark}
\label{lem:one_cap}
Defining $L \coloneqq \Diag(1,\dots,1,-1)\in\S^{n+1}$, we can write $\L^{n+1} = \set{z\in\R^{n+1}:\, z^\top L z\leq 0,\, z_{n+1}\geq 0}$.
\citet{sturm2003cones} (see also \cite[Section 6.1]{burer2015gentle}) established that the set 
\begin{align*}
\cS \coloneqq \set{Z\in\S^{n+1}_+:\, \begin{array}
	{l}
	Zc \in \L^{n+1}\\
	\ip{L,Z} \leq 0
\end{array}},
\end{align*}
where $c\in\R^{n+1}$, is ROG (cf.\ \cref{cor:cone_c_suffices}).
This result can also be recovered from a straightforward application of \cref{lem:M_leq_1,cor:cone_c_suffices}; see \cite[Lemma 12]{argue2020necessary}.
\end{remark}

\subsection{Necessary conditions} \label{sec:necessary_conditions}

In this section, we discuss the complete characterization of ROG cones defined by two LMIs or LMEs given in \cite[Theorem 3]{argue2020necessary}.
\begin{theorem}
\label{thm:two_LMI_NS}
Let $\cM = \set{M_1,M_2}$. Then, $\cT(\cM)$ (and thus $\cS(\cM)$) is ROG if and only if one of the following holds:
\begin{enumerate}[(i)]
	\item there exists $(\alpha_1,\alpha_2)\neq (0,0)$ such that $\alpha_1M_1+\alpha_2M_2 \in\S^{n+1}_+$, or
	\item there exists $a,b,c\in\R^{n+1}$ such that $M_1 = \Sym(ac^\top)$ and $M_2 = \Sym(bc^\top)$.
\end{enumerate}
\end{theorem}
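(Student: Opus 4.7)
The plan is to establish the equivalence by proving the two implications separately: the sufficiency of each of the two conditions, and the necessity, i.e., that if $\cT(\cM)$ is ROG then at least one of (i), (ii) must hold.

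\textbf{Sufficiency.} For (i), \cref{lem:psd_sum_rog} already gives that $\cS(\cM)$ is ROG whenever some nontrivial combination $\alpha_1 M_1 + \alpha_2 M_2$ is PSD; applying \cref{cor:SM_rog_then_TM_rog} then shows $\cT(\cM)$ is ROG as well. For (ii), the hypothesis $M_1 = \Sym(ac^\top)$, $M_2 = \Sym(bc^\top)$ fits directly into the format of \cref{thm:ab_suffices} with the common vector taken to be $c$ and $\cB = \set{a, b}$. That theorem then yields $\cS(\cM)$ is ROG, from which \cref{cor:SM_rog_then_TM_rog} again delivers that $\cT(\cM)$ is ROG.

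\textbf{Necessity.} Assume $\cT(\cM)$ is ROG and that (i) fails; the goal is to establish (ii). First, I would invoke Dines' convexity theorem to assert that the joint image $R \coloneqq \set{(z^\top M_1 z,\, z^\top M_2 z) : z \in \R^{n+1}}$ is a convex cone in $\R^2$. The failure of (i) is equivalent to $R$ admitting no nontrivial supporting halfspace through the origin, and for a convex cone in the plane this forces $R = \R^2$. In particular, for every prescribed pair $(u_1, u_2) \in \R^2$, there exists $z \in \R^{n+1}$ realizing it as $(z^\top M_1 z,\, z^\top M_2 z)$. A preprocessing reduction via \cref{lem:TM_rog_iff_overline_TM_rog} allows us to further assume that the combined range of $M_1, M_2$ spans $\R^{n+1}$, so that trivial directions can be disregarded.

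Using this surjectivity, one constructs rank-two matrices $Z = z_1 z_1^\top + z_2 z_2^\top \in \cT(\cM)$ by selecting $z_i$ with $(z_i^\top M_k z_i)$ values whose sums vanish for both $k = 1, 2$. The ROG characterization from \cref{cor:TM_rog_iff_nonzero_null} then forces $\range(Z) = \spann\set{z_1, z_2}$ to contain a nonzero vector of $\cN(\cM)$. Expanding $q_k(\alpha z_1 + \beta z_2) = 0$ as a quadratic in $(\alpha, \beta)$, this translates into a compatibility relation between the diagonal values $z_i^\top M_k z_i$ and the off-diagonal values $z_1^\top M_k z_2$. Varying the choice of $(z_1, z_2)$ over all admissible configurations enabled by $R = \R^2$, one should be able to propagate these pointwise relations to the algebraic statement that each of $q_1, q_2$ factors as a product of two linear forms sharing a common factor $c^\top z$, which is precisely condition (ii).

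\textbf{Main obstacle.} The delicate step is the last one: converting the family of pointwise constraints on the bilinear forms $z_1^\top M_k z_2$ (one per admissible pair $(z_1, z_2)$) into the global algebraic factorization $q_1(z) = (a^\top z)(c^\top z)$ and $q_2(z) = (b^\top z)(c^\top z)$. I expect this to require a case analysis on the rank profile of the pencil $\set{\alpha_1 M_1 + \alpha_2 M_2 : (\alpha_1, \alpha_2) \in \R^2}$, with the ``most interesting case'' being when every nonzero matrix in the pencil is indefinite yet the two forms nevertheless share a common null-direction. Pinning down $c$ as precisely this shared direction is where the principal algebraic content of the argument lies.
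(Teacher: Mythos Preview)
Your sufficiency argument is correct and matches the paper's: \cref{lem:psd_sum_rog} for condition (i), \cref{thm:ab_suffices} (equivalently \cref{cor:ac_bc_suffices}) for condition (ii), with \cref{cor:SM_rog_then_TM_rog} bridging $\cS(\cM)$ to $\cT(\cM)$.

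For necessity, your setup via Dines' theorem---failure of (i) forces the joint image to be all of $\R^2$, so one can manufacture rank-two $Z = z_1z_1^\top + z_2z_2^\top \in \cT(\cM)$ at will---is correct and is also an ingredient in the paper. But your proposed final step has a genuine gap. You aim to vary $(z_1,z_2)$ and use the condition $\spann\set{z_1,z_2}\cap\cN(\cM)\neq\set{0}$ to force the global factorization $q_k(z) = (\ell_k^\top z)(c^\top z)$. The problem is that you have no mechanism for identifying $c$, and a ``case analysis on the rank profile of the pencil'' does not obviously produce one. Note that condition (ii) forces every matrix in the pencil to have rank at most two, so the contrapositive direction you actually need is: if (i) fails and some pencil member has rank $\geq 3$, exhibit a rank-two $Z\in\cT(\cM)$ with $\range(Z)\cap\cN(\cM)=\set{0}$. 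Your outline does not indicate how to choose $(z_1,z_2)$ to achieve this; the constraint $\range(Z)\cap\cN(\cM)=\set{0}$ is delicate because $\cN(\cM)$ is an algebraic variety rather than a subspace.

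The paper's route is quite different from what you propose. After the reduction via \cref{lem:TM_rog_iff_overline_TM_rog}, it does a case split on the \emph{ambient dimension} $n+1$. Dimensions $n+1\leq 2$ are easy (condition (i) must hold). The crucial case is $n+1=3$: there, if neither (i) nor (ii) holds, one shows $\cN(\cM)$ is a union of at most four one-dimensional subspaces (a B\'ezout-type count for two conics). One then picks $w$ outside the finite union $\bigcup_{x,y\in\cN(\cM)}\spann(x,y)$, uses Dines' theorem to find $u$ with $u^\top M_k u = -w^\top M_k w$ for $k=1,2$, and checks directly that $Z = ww^\top + uu^\top$ satisfies $\range(Z)\cap\cN(\cM)=\set{0}$, contradicting ROG via \cref{cor:TM_rog_iff_nonzero_null}. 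For $n+1\geq 4$, the paper \emph{reduces} to the three-dimensional case by constructing a three-dimensional subspace $W$ such that the restricted data $\cM_W$ inherits ROG (this is the nontrivial part) while still violating both (i) and (ii). This dimension-reduction step is where the main technical work lies, and it is absent from your proposal.
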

Note that the \emph{if} direction of \cref{thm:two_LMI_NS} is a direct consequence of the sufficient conditions identified in \cref{prop:gen_psd_sum_rog_SM} and \cref{cor:ac_bc_suffices}. Furthermore, recall from \cref{cor:M2_SM_rog_iff_TM_rog} that when $\abs{\cM} =2$, the set $\cS(\cM)$ is ROG if and only if $\cT(\cM)$ is ROG. Thus it suffices to show that if $\cT(\cM)$ is ROG then one of the conditions (i) or (ii) must hold.

\begin{remark}
  \label{rem:gordan_stiemke}
  The conic Gordan--Stiemke Theorem (see Equation 2.3 in \cite{sturm2000error}
  and its surrounding comments) implies that for any subspace $W\subseteq
\S^{n+1}$,
\begin{align*}
W \cap \S^{n+1}_+ = \set{0} \iff W^\perp \cap \S^{n+1}_{++} \neq \emptyset.
\end{align*}
In particular, applying the conic Gordan--Stiemke Theorem in the context of 
\cref{thm:two_LMI_NS} we deduce that
 if $M_1,M_2$ are linearly independent, then condition (i) in \cref{thm:two_LMI_NS} 
 fails if and only if $\cT(\set{M_1,M_2})$ contains a positive definite matrix.\qedhere
\end{remark}

 Conditions (i) and (ii) in \cref{thm:two_LMI_NS} have
simple geometric interpretations.
See \cref{rem:geometric_interpretation_i} for a geometric interpretation of (i).
Condition (ii) covers the important case when the two LMEs interact in a nontrivial manner inside $\S^{n+1}_+$. Suppose for the sake of presentation that $a = e_1$, $b=e_2$, $c=e_{n+1}$. Then, \cref{cor:ac_bc_suffices} implies that
\begin{align*}
\cT(\cM) &= \conv(\set{zz^\top:\, z_1z_{n+1} = 0,\, z_2z_{n+1} = 0})\\
&= \conv\left(\conv\set{zz^\top:\, z_1 = z_2 = 0}\cup \conv\set{zz^\top:\, z_{n+1} = 0}\right)\\
&= \conv\left((0_2\oplus \S^{n-1}_+ )\cup (\S^{n}_+\oplus 0_1) \right).
\end{align*}
In other words, condition (ii) covers the case where $\cT(\cM)$ is the convex hull of the union of two faces of the positive semidefinite cone with a particular intersection structure.
\cref{thm:two_LMI_NS} states that these are the only ways for $\cT(\cM)$ to be ROG when $\abs{\cM} = 2$.

\begin{remark}\label{rem:certificates}
Both directions of \cref{thm:two_LMI_NS} admit small certificates; see \cite[Remark 11]{argue2020necessary}. Let $\cM = \set{M_1,M_2}$.
\begin{itemize}
\item Suppose $\cS(\cM)$ is ROG. Then \cref{thm:two_LMI_NS} implies that there exists either aggregation weights $(\alpha_1,\alpha_2)\neq(0,0)$ for which $\alpha_1M_1 + \alpha_2M_2 \in\S^{n+1}_+$ or vectors $a,b,c\in\R^{n+1}$ for which $M_1 = \Sym(ac^\top)$ and $M_2 = \Sym(bc^\top)$.
\item Suppose $\cS(\cM)$ is not ROG. Then, based on \cref{thm:two_LMI_NS}, it suffices to certify that neither conditions (i) nor (ii) hold.
Note that $M_1$ and $M_2$ are linearly
independent since otherwise we would have $\cS(\cM)$ is ROG. Then, from the Gordan--Stiemke Theorem (see \cref{rem:gordan_stiemke}) we deduce that condition (i) fails if and only if there exists a positive definite matrix $Z$ in $\cT(\cM)$. That is, presenting a positive definite matrix in $\cT(\cM)$ will certify that  condition (i) fails.
If either $\rank(M_1)\geq 3$ or $\rank(M_2)\geq 3$, then the spectral decomposition of the corresponding $M_i$ certifies that condition (ii) does not hold.
Else, $M_1$ and $M_2$ are both indefinite rank-two matrices and we can write $M_1 = \eta_1 \Sym(ab^\top)$ and $M_2=\eta_2 \Sym(cd^\top)$ where $\eta_i \in\R$ and $a,b,c,d\in\R^{n+1}$ are unit vectors. This decomposition is unique up to renaming $a$ and $b$ or $c$ and $d$. Then, condition (ii) does not hold if and only if $a,b,c,d$ are distinct. In particular, this decomposition certifies that condition (ii) does not hold.\qedhere
\end{itemize}
\end{remark}

The proof of \cref{thm:two_LMI_NS} is nontrivial and requires several arguments. We give a proof outline below. See \cite[Section 4]{argue2020necessary} for a more detailed proof.

\begin{proof}
[Proof outline for \cref{thm:two_LMI_NS}]
We may assume that
\begin{align*}
\spann\left(\range(M_1) \cup\range(M_2)\right) = \R^{n+1}
\end{align*}
without loss of generality.
Indeed, when the set $\set{M_1,M_2}$ does not satisfy this assumption, we may consider the set of restricted matrices $\set{(M_1)_W, (M_2)_W}$ where $(M_i)_W$ is the restriction of $M_i$ onto the minimal subspace, $W$, containing $\range(M_1) \cup\range(M_2)$. It is not hard to show that the ROG property as well as conditions (i) and (ii) are invariant under this operation (see \cite[Lemma 13]{argue2020necessary}).

By \cref{lem:psd_sum_rog,cor:ac_bc_suffices}, it suffices to show that if $\cT(\cM)$ is ROG, then either condition (i) or condition (ii) holds.
We will split the proof of \cref{thm:two_LMI_NS} into a number of cases depending on the dimension $n+1$.
\begin{itemize}
	\item The case $n + 1= 1$ holds vacuously as we can set $(\alpha_1,\alpha_2)$ to either $(1,0)$ or $(-1,0)$ to satisfy (i).
	\item For $n + 1 = 2$, we can show that condition (i) necessarily holds. Indeed, supposing otherwise, we can explicitly construct a rank-two extreme ray of $\cT(\cM)$ using the Gordan--Stiemke Theorem. The construction crucially uses the geometry of $\R^2$ (and $\S^2$); see \cite[Proposition 2]{argue2020necessary}.
	\item For $n+1=3$, when neither conditions (i) nor (ii) are satisfied, we can explicitly construct extreme rays of $\cT(\cM)$ with rank two. The construction is based on understanding what the corresponding $\cN(\cM)$ set looks like. This construction crucially uses the geometry of $\R^3$. In particular, it establishes that in this case where neither conditions (i) nor (ii) are satisfied, $\cN(\cM)$ is the union of at most four one-dimensional subspaces of $\R^3$ (see \cite[Lemma 17]{argue2020necessary}).\footnote{For readers familiar with algebraic geometry, this may be viewed as a consequence of B\'ezout's theorem.} Following this, we may then apply Dine's Theorem \cite{dines1941mapping} and \cite[Lemma 3.13]{hildebrand2016spectrahedral} to construct our desired rank-two extreme ray of $\cT(\cM)$ (see \cite[Proposition 3]{argue2020necessary}).
	\item Finally, we will reduce the case where $n+1\geq 4$ to the case where $n + 1 = 3$. Specifically, supposing that $\cT(\cM)$ is a ROG cone with $n+1\geq 4$ for which condition (i) does not hold,
	it is possible to construct a three-dimensional subspace $W$ of $\R^{n+1}$ such that the restriction of $\cM$ to $W$, denoted $\cM_W$, satisfies: $\cT(\cM_W)$ is ROG, neither conditions (i) nor (ii) hold for $\cM_W$. This gives us our desired contradiction. See \cite[Proposition 4]{argue2020necessary}.
\qedhere
\end{itemize}
\end{proof}

\begin{remark}\label{rem:stronger_necessary_condition}
	The above proof outline in fact shows something stronger than \cref{thm:two_LMI_NS}. Specifically, in the cases where
	\begin{align*}
	\dim\left(\spann\left(\range(M_1)\cup\range(M_2)\right)\right) \neq 3,
	\end{align*}
	we were able to derive contradictions by simply assuming that condition (i) did not hold.
	In other words, condition (i) itself completely characterizes the ROG property of a cone defined by two LMIs whenever the dimension of their joint span is not three-dimensional.\qedhere
\end{remark}

We close by illustrating the proof of \cref{thm:two_LMI_NS} on a prototypical example where the joint span of $M_1$ and $M_2$ is three-dimensional; see \cite[Example 2]{argue2020necessary}.
\begin{example}
Suppose $\cM = \set{M_1,M_2}$ where $M_1 = \Diag(1,-1,0)$ and $M_2 = \Diag(0,1,-1)$ so that
\begin{align*}
\cT(\cM) = \set{Z\in\S^3_+:\, Z_{1,1} = Z_{2,2} = Z_{3,3}}.
\end{align*}
Note that $\alpha_1 M_1 + \alpha_2 M_2 = \Diag(\alpha_1,\alpha_2-\alpha_1, -\alpha_2)$ is positive semidefinite if and only if $(\alpha_1,\alpha_2)=(0,0)$ so that condition (i) of \cref{thm:two_LMI_NS} is violated.
Next, we claim that condition (ii) of \cref{thm:two_LMI_NS} does not hold. Indeed, assuming condition (ii), we have that $\alpha_1 M_1 + \alpha_2 M_2 = \Sym((\alpha_1 a + \alpha_2 b) c^\top)$ has rank at most two. Observing that $2M_1 +M_2 = \Diag(2,-1,1)$ has rank three, we deduce that condition (ii) of \cref{thm:two_LMI_NS} cannot hold. 
We conclude that $\cT(\cM)$ is not ROG.

Below, we construct a rank-two extreme ray of $\cT(\cM)$. 

Note that $\cN(\cM) = \set{z\in\R^3:\, z_1^2 = z_2^2 = z_3^2}$ is a union of the four lines generated by $(1,1,1)$, $(1,1, -1)$, $(1,-1,1)$, and $(1,-1,-1)$. Then,
\begin{align*}
\cR\coloneqq \bigcup_{x,y\in\cN(\cM)}\spann(x,y)
\end{align*}
consists of all the vectors in $\R^3$ with at most two different magnitudes.

As $\cR$ is a union of finitely many planes in $\R^3$, there exists a vector $w\notin \cR$. For example, we may pick $w = (-1, 0, 1)$.
Dine's Theorem \cite{dines1941mapping} states that as condition (i) does not hold, there exists some $u\in\R^3$ such that
\begin{align*}
\begin{pmatrix}
	u^\top M_1 u\\
	u^\top M_2 u
\end{pmatrix} =
-\begin{pmatrix}
	w^\top M_1 w\\
	w^\top M_2 w
\end{pmatrix}.
\end{align*}
Indeed, $u = (1,\sqrt{2},1)$ is such a vector.
Then, $Z\coloneqq ww^\top + uu^\top$ is a rank-two matrix contained in $\cT(\cM)$. By \cref{cor:TM_rog_iff_nonzero_null}, it suffices to show that $\range(Z) \cap \cN(\cM) = \set{0}$. We will write a general element from $\range(Z)$ as
$\left( \beta - \alpha, \sqrt{2}\beta, \alpha+\beta \right)$. Then
\begin{align*}
\range(Z) \cap\cN(\cM) &= \set{\begin{pmatrix}\beta-\alpha \\\sqrt{2}\beta \\ \alpha+\beta\end{pmatrix}
	:\, \begin{array}
	{l}
	(\beta-\alpha)^2=2\beta^2=(\alpha+\beta)^2
\end{array}}.
\end{align*}
Note that $(\beta-\alpha)^2=(\alpha+\beta)^2$ implies that $\alpha\beta = 0$, whence $(\beta-\alpha)^2 = 2\beta^2$ implies that $\alpha^2 = \beta^2$.
We conclude $\range(Z)\cap\cN(\cM) = \set{0}$ and that $\cT(\cM)$ is not ROG.\qedhere
\end{example}

\subsection{Minimizing ratios of quadratic functions over ROG domains}
\label{subsec:quadratic_ratios}
In this section, we show how a ``re-homegenization'' trick can be combined with our toolset (specifically \cref{lem:SDPtightness}) to minimize the ratio of two quadratic functions over a ROG domain.
Let $M_\obj, B\in\S^{n+1}$ and let $\cM\subseteq\S^{n+1}$.
We will consider the following optimization problem:
\begin{align}
\label{eq:ratio_of_quadratics_positive}
\inf_{\tilde z\in\R^{n+1}}\set{\tfrac{\tilde z^\top M_\obj\tilde  z}{\tilde z^\top B\tilde  z}:\, \begin{array}
	{l}
	\tilde z\tilde z^\top \in\cS(\cM)\\
	\tilde z^\top B \tilde z>0\\
	\tilde z^2_{n+1} = 1
\end{array}}.
\end{align}

\begin{remark}
Note that the variant of \eqref{eq:ratio_of_quadratics_positive} where the constraint $\tilde z^\top B\tilde z>0$ is replaced with $\tilde z^\top B\tilde z\neq0$ can be decomposed as two instances of \eqref{eq:ratio_of_quadratics_positive} based on the sign of $\tilde z^\top B\tilde z$ (and negating both $M_\obj$ and $B$ on the portion of the domain where $\tilde z^\top B\tilde z$ is negative).
\end{remark}

We derive an SDP relaxation to \eqref{eq:ratio_of_quadratics_positive} as follows:
\begin{align}
\label{eq:ratio_of_quadratics_sdp1}
\inf_{\tilde z\in\R^{n+1}}\set{\tfrac{\tilde z^\top M_\obj\tilde  z}{\tilde z^\top B\tilde  z}:\, \begin{array}
	{l}
	\tilde z\tilde z^\top \in\cS(\cM)\\
	\tilde z^\top B \tilde z>0\\
	\tilde z^2_{n+1} = 1
\end{array}}
&= \inf_{z\in\R^{n+1}}\set{z^\top M_\obj z:\, \begin{array}
	{l}
	zz^\top \in\cS(\cM)\\
	z^\top B z=1\\
	z^2_{n+1} >0
\end{array}}\\
\label{eq:ratio_of_quadratics_sdp2}
&\geq \inf_{z\in\R^{n+1}}\set{z^\top M_\obj z:\, \begin{array}
	{l}
	zz^\top \in\cS(\cM)\\
	z^\top B z=1
\end{array}}\\
\label{eq:ratio_of_quadratics_sdp3}
&\geq \inf_{Z\in\S^{n+1}}\set{\ip{M_\obj, Z}:\, \begin{array}
	{l}
	Z \in\cS(\cM)\\
	\ip{B,Z} = 1
\end{array}},
\end{align}
where \eqref{eq:ratio_of_quadratics_sdp1} follows from a simple change of variables to have the desired scaling relations, \eqref{eq:ratio_of_quadratics_sdp2} is obtained by dropping the constraint $z^2_{n+1} >0$, and we dropped the rank-1 requirement on the matrix in the final relaxation step of \eqref{eq:ratio_of_quadratics_sdp3}.

\cref{lem:SDPtightness} implies that the second inequality holds with equality whenever $\cS(\cM)$ is ROG and \eqref{eq:ratio_of_quadratics_sdp3} is bounded below. This boundedness holds under relatively minor assumptions. Similarly, a variety of different assumptions may be used to guarantee that the inequality relation in  \eqref{eq:ratio_of_quadratics_sdp2} holds with equality.
The following lemma demonstrates one such pair of sufficient conditions.

\begin{lemma}
\label{lem:ratio_of_quadratics}
Let $M_\obj,B\in\S^{n+1}$ and $\cM\subseteq\S^{n+1}$. Suppose $\cS(\cM)$ is ROG, there exists $M^*\in\clcone(\cM)$ and $\lambda\in\R$ such that $M_\obj + M^* + \lambda B\succeq 0$, and
\begin{align}
\label{eq:z_closure_property}
 \cl\set{z\in\R^{n+1}:\, \begin{array}
	{l}
	zz^\top \in\cS(\cM)\\
	z^2_{n+1} >0
\end{array}} = \set{z\in\R^{n+1}:\, zz^\top \in\cS(\cM)}.
\end{align}
Then, equality holds throughout \eqref{eq:ratio_of_quadratics_sdp1} to \eqref{eq:ratio_of_quadratics_sdp3}.
\end{lemma}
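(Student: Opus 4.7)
The plan is to establish equality at each of the two inequalities in the chain separately: first showing $\eqref{eq:ratio_of_quadratics_sdp2}=\eqref{eq:ratio_of_quadratics_sdp3}$ via the ROG hypothesis and \cref{lem:SDPtightness}, and then showing $\eqref{eq:ratio_of_quadratics_sdp1}=\eqref{eq:ratio_of_quadratics_sdp2}$ via an approximation argument based on the closure condition \eqref{eq:z_closure_property}.

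For the first equality, I plan to invoke \cref{lem:SDPtightness} directly. Its hypotheses require that $\cS(\cM)$ be ROG (given) and that the SDP value in \eqref{eq:ratio_of_quadratics_sdp3} be bounded from below. The boundedness comes from condition 2: for any $Z$ feasible for \eqref{eq:ratio_of_quadratics_sdp3}, the relation $M_\obj + M^* + \lambda B\succeq 0$ applied to $Z\succeq 0$ yields $\ip{M_\obj, Z}\geq -\ip{M^*, Z} - \lambda\ip{B,Z}$. Since $M^*\in\clcone(\cM)$ and $\cS(\cM)=\cS(\clcone(\cM))$, we have $\ip{M^*, Z}\leq 0$, and $\ip{B,Z}=1$, so $\ip{M_\obj, Z}\geq -\lambda$. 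With boundedness verified, \cref{lem:SDPtightness} gives equality of \eqref{eq:ratio_of_quadratics_sdp2} and \eqref{eq:ratio_of_quadratics_sdp3}.

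For the second equality, the inequality $\eqref{eq:ratio_of_quadratics_sdp1}\geq\eqref{eq:ratio_of_quadratics_sdp2}$ is automatic (the former has an additional constraint). To reverse this, I would fix an arbitrary $z$ feasible for \eqref{eq:ratio_of_quadratics_sdp2}, i.e., $zz^\top\in\cS(\cM)$ and $z^\top Bz=1$. By the closure hypothesis \eqref{eq:z_closure_property}, there exists a sequence $\set{z_k}$ with $z_kz_k^\top\in\cS(\cM)$, $(z_k)_{n+1}^2>0$, and $z_k\to z$. By continuity, $z_k^\top Bz_k\to 1$, so $z_k^\top Bz_k>0$ for all large $k$, and we may define $z_k' \coloneqq z_k/\sqrt{z_k^\top Bz_k}$. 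Then $z_k'(z_k')^\top = (z_k z_k^\top)/(z_k^\top B z_k) \in\cS(\cM)$ (since $\cS(\cM)$ is a cone), $(z_k')^\top B z_k' = 1$, and $(z_k')_{n+1}^2>0$, so $z_k'$ is feasible for \eqref{eq:ratio_of_quadratics_sdp1}. Moreover, $z_k'\to z$, and hence $(z_k')^\top M_\obj z_k'\to z^\top M_\obj z$, showing that the infimum in \eqref{eq:ratio_of_quadratics_sdp1} is at most $z^\top M_\obj z$. Taking the infimum over feasible $z$ for \eqref{eq:ratio_of_quadratics_sdp2} yields $\eqref{eq:ratio_of_quadratics_sdp1}\leq\eqref{eq:ratio_of_quadratics_sdp2}$.

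I don't foresee a serious obstacle: conditions 1, 2, 3 of the lemma align precisely with what is needed at each step (ROG for \cref{lem:SDPtightness}, the dual feasibility certificate for boundedness, and the closure identity for the rescaling/approximation). The only minor care point is ensuring that the rescaling $z_k\mapsto z_k'$ keeps the sequence in $\cS(\cM)$ and preserves $(z_k')_{n+1}^2>0$; both follow from the conic structure of $\cS(\cM)$ and the fact that positive rescaling preserves the sign of the last coordinate's square.
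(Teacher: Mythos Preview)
Your proposal is correct and matches the approach the paper sketches in the paragraph preceding the lemma (the paper itself does not include a detailed proof): use the dual-type certificate $M_\obj + M^* + \lambda B\succeq 0$ to verify boundedness of \eqref{eq:ratio_of_quadratics_sdp3} so that \cref{lem:SDPtightness} applies for the second inequality, and use the closure condition \eqref{eq:z_closure_property} together with a rescaling/continuity argument for the first. The only cosmetic point is that you could note $\cS(\cM)=\cS(\clcone(\cM))$ is stated in \cref{rem:TM_finite_SM_infinite}, which justifies $\ip{M^*,Z}\leq 0$.
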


\begin{example}[Regularized total least squares]
The total least squares problem (TLS) adapts least squares regression to the setting where both the independent and dependent variables may be corrupted by noise \cite{golub1999tikhonov}. A variant of the TLS, known as the regularized total least squares problem (RTLS), introduces an additional regularization constraint that protects against poorly behaved solutions which arise when the data matrix has small singular values. This regularization is well studied from both theoretical and practical points of view (see \cite{xia2015minimizing,beck2009convex} and references therein).

By eliminating variables, the RTLS can be rewritten as minimizing the ratio of a nonnegative quadratic function and a positive quadratic function over a nonempty ellipsoid (see for example \cite{golub1999tikhonov}).
In particular, the RTLS can be written in the form of \eqref{eq:ratio_of_quadratics_positive} where $M_\obj, B\in\S^{n+1}_+$ and $\abs{\cM} = 1$.
It is then straightforward to verify that the assumptions of \eqref{lem:ratio_of_quadratics} are satisfied so that the RTLS admits an exact SDP relaxation in the sense of objective value exactness.
\end{example}

\section*{Acknowledgments}
This research is supported in part by 
NSF grant CMMI 1454548 and 
ONR grant N00014-19-1-2321. 

{
\bibliographystyle{plainnat} 
\bibliography{bib.bib}
}

\end{document}